\documentclass[12pt]{amsart}
\usepackage{amsfonts,amssymb,amsbsy,amsmath,amsthm}
\topmargin1cm \textheight21.4cm \textwidth15.7cm \oddsidemargin
0.5cm \evensidemargin 0.5cm
\parindent0cm

\newtheorem{theorem}{Theorem}[section]
\newtheorem{lemma}{Lemma}[section]
\newtheorem{follow}{Corollary}[section]
\newtheorem{pr}{Proposition}[section]
\theoremstyle{definition}

\newcommand{\bel}{\begin{equation} \label}
\newcommand{\ee}{\end{equation}}

\newcommand{\xp}{X_\perp}
\newcommand{\ep}{\varkappa}

\newcommand{\rt}{{\mathbb R}^{3}}
\newcommand{\rd}{{\mathbb R}^{2}}
\newcommand{\re}{{\mathbb R}}

\newcommand{\gaden}{\tilde{I}_{\perp} \otimes H_{0, \parallel}}
\newcommand{\ph}{\chi(\hm)}
\newcommand{\pho}{\chi(\hmo)}
\newcommand{\hopa}{H_{0, \parallel}}
\newcommand{\hpa}{H_{\parallel}}
\newcommand{\ppar}{p_{\parallel}}
\newcommand{\hope}{H_{0, \perp}}
\newcommand{\hompe}{H_{0, \perp}^{(m)}}
\newcommand{\hmo}{H_{0}^{(m)}}
\newcommand{\hm}{H^{(m)}}
\newcommand{\mpe}{m_{\perp}}
\newcommand{\lre}{L^2_{\rm Re}(\re_+; \varrho d\varrho)}

\newcommand{\Z}{{\mathbb Z}}

\newcommand{\C}{{\mathbb C}}

\newcommand{\Tr}{\operatorname{Tr}}

\newcommand{\tihm}{\tilde{H}^{(m)}}

{

\title[Resonances and SSF singularities for a magnetic Schr\"odinger operator]
{Dynamical resonances and SSF singularities for a magnetic
Schr\"odinger operator}
\author[M. A. Astaburuaga]
{Mar\'ia Ang\'elica Astaburuaga}
\address{Departamento de Matem\'aticas, Facultad de Matem\'aticas, Pontificia Universidad
Cat\'olica de Chile, Vicu\~na Mackenna 4860, Santiago de Chile}
\email{angelica@mat.puc.cl}
\author[Ph. Briet]{Philippe Briet }
\address{Centre de Physique Th\'eorique,
CNRS-Luminy, Case 907, 13288 Marseille,
France}\email{briet@cpt.univ-mrs.fr}
\author[V. Bruneau]{Vincent Bruneau }
\address {Universit\'e Bordeaux I, Institut de Math\'ematiques de Bordeaux,  UMR CNRS 5251,  351, Cours de la Lib\'eration, 33405
Talence, France}\email{vbruneau@math.u-bordeaux1.fr}
\author[C. Fern\'andez]
{Claudio Fern\'andez}
\address{Departamento de Matem\'aticas, Facultad de Matem\'aticas, Pontificia Universidad
Cat\'olica de Chile, Vicu\~na Mackenna 4860, Santiago de Chile}
\email{cfernand@mat.puc.cl}
\author[G. Raikov]
{Georgi Raikov}
\address{Departamento de Matem\'aticas, Facultad de Matem\'aticas, Pontificia Universidad
Cat\'olica de Chile, Vicu\~na Mackenna 4860, Santiago de Chile}
\email{graikov@mat.puc.cl}

\begin{document}

\maketitle
\begin{center}
{\em \large Dedicated to Vesselin Petkov on the occasion of his
65th birthday}\\
\end{center}

\begin{abstract}
We consider the Hamiltonian $H$ of a 3D spinless non-relativistic
quantum particle subject to parallel constant magnetic and
non-constant electric field. The operator $H$ has infinitely many
eigenvalues of infinite multiplicity embedded in its continuous
spectrum. We perturb $H$ by appropriate scalar potentials $V$ and
investigate the transformation of these embedded eigenvalues into
resonances. First, we assume that the electric potentials are
dilation-analytic with respect to the variable along the magnetic
field, and obtain an asymptotic expansion of the resonances as the
coupling constant $\varkappa$ of the perturbation tends to zero.
Further, under the assumption that the Fermi Golden Rule holds
true, we deduce estimates for the time evolution of the resonance
states with and without analyticity assumptions; in the second
case we obtain these results as a corollary of suitable Mourre
estimates and a recent article of Cattaneo, Graf and Hunziker
\cite{cgh}. Next, we describe sets of perturbations $V$ for which
the Fermi Golden Rule is valid at each embedded eigenvalue of $H$;
these sets turn out to be dense in various suitable topologies.
Finally, we assume that $V$ decays fast enough at infinity and is
of definite sign, introduce the Krein spectral shift function for
the operator pair $(H+V, H)$, and study its singularities at the
energies which coincide with eigenvalues of infinite multiplicity
of the unperturbed operator $H$.
\end{abstract}

\bigskip

{\bf  2000 AMS Mathematics Subject Classification}:  35P25, 35J10,
47F05, 81Q10
\bigskip

{\bf Keywords}: magnetic Schr\"odinger operators, resonances,
Mourre estimates, spectral shift function
\bigskip


\vspace{1cm}

\section{Introduction}
\setcounter{equation}{0} In the present article we consider a
magnetic Schr\"odinger operator $H$ which, from a physics point of
view, is the quantum Hamiltonian of a 3D non-relativistic spinless
quantum particle subject to an electromagnetic field $({\bf E},
{\bf B})$ with electric component ${\bf E} = -(0,0,v_0')$ where
$v_0$ is a scalar potential depending only on the variable $x_3$,
and magnetic component ${\bf B} = (0,0,b)$ where $b$ is a positive
constant. From a mathematical  point of view this operator is
remarkable because of the generic presence of infinitely many
eigenvalues of infinite multiplicity, embedded in the continuous
spectrum of $H$. These eigenvalues have the form $2bq + \lambda$,
$q \in {\mathbb Z}_+ : = \{0,1,2,\ldots\}$, where $2bq$, $q \in
\Z_+$, are the Landau levels, i.e. the infinite-multiplicity
eigenvalues of the (shifted) Landau Hamiltonian, and $\lambda$ is
a simple eigenvalue of the 1D operator $-\frac{d^2}{dx^2} +
v_0(x)$.  We introduce the perturbed operator $H + \varkappa V$
where $V$ is a $H$-compact multiplier by a real function, and
$\varkappa \in \re$ is a coupling constant, and study the
transition of the eigenvalues $2bq + \lambda$, $q \in {\mathbb
Z}_+$, into a ``cloud" of resonances which converge to $2bq +
\lambda$ as
$\varkappa \to 0$.\\
 In order to perform this analysis, we assume
that $V$ is axisymmetric so that the operator $H + \varkappa V$
commutes with the $x_3$-component of the angular-momentum
operator $L$. In this case $H + \varkappa V$ is unitarily
equivalent to the orthogonal sum $\oplus_{m \in \Z} (\hm +
\varkappa V)$ where $\hm$ is unitarily equivalent to the
restriction of $H$ onto ${\rm Ker}\,(L-m)$, $m \in \Z$. This
allows us to reduce the analysis to a perturbation of a simple
eigenvalue $2bq + \lambda$ of the operator $\hm$ with fixed
magnetic quantum number $m$.\\
We apply two different approaches to the definition of resonances.
First, we suppose that $v_0$ and $V$ are analytic in $x_3$, and
following the classical approach of Aguilar and Combes
\cite{AgCo},  define the resonances as the eigenvalues of the
dilated non-self-adjoint operator $H(\theta) + \varkappa
V_{\theta}$. We obtain an asymptotic expansion as $\varkappa \to
0$ of each of these resonances in the spirit of the Fermi Golden
Rule (see e.g. \cite[Section XII.6]{RSIV}), and estimate the time
decay of the resonance states. A similar relation between the
small-coupling-constant asymptotics of the resonance, and the
exponential time decay of the resonance state has been established
by Herbst \cite{h} in the case of the Stark Hamiltonian, and later
by other authors in the case of various quantum Hamiltonians (see
e.g.
\cite{sk}, \cite{fk}, \cite{aabcdf}). \\
Our other approach is close to the time dependent methods
developed in \cite{sw} and \cite{cs}, and, above all, to the
recent article by Cattaneo, Graf and Hunziker \cite{cgh}, where
the dynamic estimates of the resonance states are based on
appropriate Mourre estimates \cite{Mo}. We prove Mourre type
estimates for the operators $\hm$, which might be of independent
interest, and formulate a theorem on the dynamics of the resonance
states which can be regarded as an application of the general
abstract result of \cite{cgh}.\\
Both our approaches are united by the requirement that the
perturbation $V$ satisfies the Fermi Golden Rule for all embedded
eigenvalues for the operators $\hm$, $m \in \Z$. We establish
several results which show that the set of such perturbations is
dense in various topologies compatible with the assumptions of our
theorems on the resonances of $H + \varkappa V$.\\
Further, we cancel the restriction that $V$ is axisymmetric but
suppose that it decays fast enough at infinity, and has a definite
sign, introduce the Krein spectral shift function (SSF) for the
operator pair $(H+V,H)$, and study its singularity at each energy
$2bq + \lambda$, $q \in \Z_+$, which, as before, is an eigenvalue
of infinite multiplicity of the unperturbed operator $H$. We show
that the leading term of this singularity can be expressed via the
eigenvalue counting function for compact Berezin-Toeplitz
operators. Using the well-known results on the spectral
asymptotics for such operators (see \cite{r0}, \cite{rw}), we
obtain explicitly the main asymptotic term of the SSF as the
energy approaches the fixed point $2bq + \lambda$ for several
classes of perturbations with prescribed decay rate with respect
to the variables on the plane perpendicular to the
magnetic field. \\
It is natural to associate these singularities of the SSF to the
accumulation of resonances to these points because it  is
conjectured that the resonances are the poles of the SSF. This
conjecture is justified by the Breit-Wigner approximation which is
mathematically proved in other cases (see
for instance \cite{pz1}, \cite{pz2}, \cite{bp}, \cite{bbr}).\\
The article is organized as follows. In Section 2 we summarize
some well-known spectral properties of the operators $H$ and $\hm$
and their perturbations, which are systematically exploited in the
sequel. Section 3 is devoted to our approach based on the dilation
analyticity, while Section 4 contains our results obtained as
corollaries of appropriate Mourre estimates. In Section 5 we
discuss the density in suitable topologies of the sets of
perturbations $V$ for which the Fermi Golden Rule holds true for
every embedded eigenvalue $2bq + \lambda$ of the operator $\hm$,
$m \in \Z$. Finally, the asymptotic analysis of the SSF
near the points $2bq + \lambda$ can be found in Section 6. \\
We dedicate the article to Vesselin Petkov with genuine admiration
for his most significant contributions to the spectral and
scattering theory for partial differential operators. In
particular, we would like to mention his keystone results on the
distribution of resonances, and the Breit-Wigner approximation of
the spectral shift function for various quantum Hamiltonians (see
\cite{pz1}, \cite{pz2}, \cite{bp}), and, especially, his recent works on
magnetic Stark operators (see \cite{dp1}, \cite{dp2}). These
articles as well as many other works of Vesselin have
strongly influenced and stimulated our own research.\\

\section{Preliminaries}
\setcounter{equation}{0}

{\bf 2.1.} In this subsection we summarize some well-known facts
on the spectral properties of the 3D Schr\"odinger operator with
constant magnetic field ${\bf B} = (0,0,b)$, $b = {\rm const.} >
0$. More details could be found, for example, in \cite{ahs} or
\cite[Section 9]{BPR}.\\
Let
$$
H_0 : = \hope \otimes I_{\parallel} + I_{\perp} \otimes \hopa
$$
where  $I_{\parallel}$ and $I_{\perp}$ are the identity operators
in $L^2(\re_{x_3})$ and $L^2(\rd_{x_1, x_2})$ respectively,
$$
\hope : = \left(i\frac{\partial}{\partial x_1} -
\frac{bx_2}{2}\right)^2 + \left(i\frac{\partial}{\partial x_2} +
\frac{bx_1}{2}\right)^2 - b, \quad (x_1,x_2) \in \rd,
$$
is the Landau Hamiltonian shifted by the constant $b$, and
$$
\hopa : = -\frac{d^2}{d x_3^2}, \quad x_3 \in \re.
$$
The operator $\hope$ is self-adjoint in $L^2(\rd)$, the operator
$\hopa$ is self-adjoint in $L^2(\re)$, and hence the operator
$H_0$ is self-adjoint in $L^2(\re^3)$. Moreover, we have
$\sigma(\hope) = \cup_{q=0}^{\infty} \{2bq\}$, and every
eigenvalue $2bq$ of $\hope$ has infinite multiplicity (see e.g.
\cite{ahs}).
 Denote by $p_q$ the orthogonal
projection onto ${\rm Ker}\,(\hope -2bq)$, $q \in {\mathbb Z}_+$.
Since $\sigma(\hopa) = [0,\infty)$, we have $\sigma(H_0) =
\cup_{q=0}^{\infty} [2bq, \infty) =
[0,\infty)$.\\
Let now $m \in {\mathbb Z}$, $\varrho = (x_1^2 + x_2^2)^{1/2}$. Put
$$
\hompe : = - \frac{1}{\varrho} \frac{\partial}{\partial \varrho} \varrho
\frac{\partial}{\partial \varrho} + \left(\frac{m}{\varrho} -
\frac{b\varrho}{2}\right)^2 - b.
$$
The operator $\hompe$ is self-adjoint in $L^2(\re_+; \varrho d\varrho)$,
and we have $\sigma(\hompe) = \cup_{q =m_-}^{\infty} \{2bq\}$
where $m_- = \max\{0, -m\}$ (see e.g. \cite{ahs}). In contrast to
the operator $\hope$, every eigenvalue $2bq$ of $\hompe$ is
simple. Denote by $\tilde{p}_{q,m}$ the orthogonal projection onto
${\rm Ker}\,(\hompe - 2bq)$, $q \in {\mathbb Z}_+$, $q \geq m_-$.
Put
    \bel{ser1}
\varphi_{q,m}(\varrho) : = \sqrt{\frac{q!}{\pi
(q+m)!}\left(\frac{b}{2}\right)^{m+1}} \varrho^m
L_q^{(m)}\left(b\varrho^2/2\right) e^{-b \varrho^2/4}, \quad
\varrho \in \re_+, \quad q \in {\mathbb Z}, \quad q \geq m_-,
    \ee
where
    \bel{x3}
L_q^{(m)}(s) : = \sum_{l=m_-}^{q} \frac{(q+m)!}{(m+l)! (q-l)!}
\frac{(-s)^l}{l!}, \quad s \in \re,
    \ee
are the generalized Laguerre polynomials. Then we have
$$
\hompe \varphi_{q,m} = 2bq \varphi_{q,m},
$$
$\|\varphi_{q,m}\|_{L^2(\re_+; \varrho d\varrho)} = 1$, and
$\varphi_{q,m} = \overline{\varphi_{q,m}}$ (see e.g. \cite[Section
9]{BPR}). Moreover, $\tilde{p}_{q,m} = |\varphi_{q,m}\rangle
\langle\varphi_{q,m}|$.
\\
Set
$$
\hmo : = \hompe \otimes I_{\parallel} + \tilde{I}_{\perp} \otimes
\hopa
$$
where $\tilde{I}_{\perp}$ is the identity operator in $L^2(\re_+;
\varrho d\varrho)$. Evidently, $\sigma(\hmo) = [2m_-b,
\infty)$.\\
Let $(\varrho, \phi, x_3)$ be the cylindrical coordinates  in
$\re^3$. The operator $\hmo$, $m \in {\mathbb Z}$, is unitarily
equivalent to the restriction of $H_0$ onto ${\rm Ker}\,(L-m)$
where
$$
L : = -i\left(x_1 \frac{\partial}{\partial x_2} - x_2
\frac{\partial}{\partial x_1}\right) = -i \frac{\partial}{\partial
\phi}
$$
is the $x_3$-component of the angular-momentum operator, which
commutes with $H_0$. \\
 Moreover, the operator $H_0$ is unitarily equivalent to the
orthogonal sum $\oplus_{m \in {\mathbb Z}} \hmo$. More precisely,
if we pass to cylindrical coordinates, and decompose $u \in {\rm
Dom}(H_0)$ into a Fourier series with respect to $\phi$, i.e. if
we write
$$
u(\varrho \cos{\phi}, \varrho \sin{\phi}, x_3) = \sum_{m \in
{\mathbb Z}} e^{im\phi} u_m(\varrho, x_3),
$$
we have
$$
(H_0 u)(\varrho \cos{\phi}, \varrho \sin{\phi}, x_3) = \sum_{m \in
{\mathbb Z}} e^{im\phi} (\hmo u_m)(\varrho, x_3).
$$

{\bf 2.2.} In this subsection we perturb the operators $\hmo$ and
$H_0$ by a scalar potential $v_0$ which depends only on the
variable $x_3$.\\  Let $v_0 : \re \to \re$ be a measurable
function. Throughout the paper we assume that the multiplier by
$v_0$ is $\hopa$-compact, which is ensured, for instance, by $v_0
\in L^2(\re) + L^{\infty}_{\varepsilon}(\re)$. Set
$$
\hpa : = \hopa + v_0.
$$
Then we have
$$
\sigma_{\rm ess}(\hpa) = \sigma_{\rm ess}(\hopa) = [0, \infty).
$$
For simplicity, throughout the article we suppose also that
    \bel{ser7}
    \inf{\sigma(\hpa)} > - 2b.
    \ee
Evidently, \eqref{ser7} holds true if the negative part $v_{0,-}$
of the function $v_0$ is bounded, and we have
$\|v_{0,-}\|_{L^{\infty}(\re)} < 2b$. \\
Assume now that the discrete spectrum of $\hpa$ is not empty; this
would follow, for example, from the additional conditions $v_0 \in
L^1(\re)$ and $\int_{\re} v_0(x) dx < 0$ (see e.g. \cite[Theorem
XIII.110]{RSIV}). Occasionally, we will impose also the assumption
that the discrete spectrum of $\hpa$ consists of a unique
eigenvalue; this would be implied, for instance, by the inequality
$\int_{\re}|x|v_{0,-}(x) dx < 1$ (see
e.g. \cite[Chaper II, Theorem 5.1]{bs}). \\
 Let $\lambda$ be a discrete eigenvalue of the operator $\hpa$
 which necessarily is simple.  Then  $\lambda \in (-2b,0)$ by
\eqref{ser7}. Let $\psi$ be an eigenfunction satisfying
    \bel{ser2}
    \hpa \psi = \lambda \psi, \quad \|\psi\|_{L^2(\re)} = 1,
    \quad \psi = \overline{\psi}.
    \ee
    Denote by $\ppar = \ppar(\lambda)$ the spectral projection onto
    ${\rm Ker}(\hpa - \lambda)$. We have $\ppar = |\psi\rangle
    \langle \psi|$.\\
    Suppose now that $v_0$ satisfies
    \bel{31}
    |v_0(x)| = O \left(\langle x \rangle^{-m_0}\right), \quad x \in \re,
    \quad m_0 > 1,
    \ee
    where $\langle x \rangle:= (1 + | x |^2)^{\frac12}$.
Then the multiplier by $v_0$ is a relatively trace-class
perturbation of $\hopa$, and by the Birman-Kuroda theorem (see
e.g. \cite[Theorem XI.9]{RSIII}) we have
$$
\sigma_{\rm ac}(\hpa) = \sigma_{\rm ac}(\hopa) = [0,\infty).
$$
Moreover, by the Kato theorem (see e.g. \cite[Theorem
XIII.58]{RSIV}) the operator $\hpa$ has no strictly positive
eigenvalues.   In fact, for all $E > 0$ and $s>1/2$ the
operator-norm limit
    \bel{35}
    \langle x \rangle^{-s} (\hpa - E)^{-1} \langle x \rangle^{-s} : =
    \lim_{\delta \downarrow 0} \langle x\rangle^{-s}  (\hpa - E - i\delta)^{-1} \langle x \rangle^{-s},
    \ee
    exists in ${\mathcal L}(L^2(\re))$, and for each compact subset
    $J$ of $\re_+ = (0,\infty)$ and each $s > 1/2$
    there exists a constant $C_{J,s}$ such that for each $E \in J$
    we have
    \bel{36}
\|\langle x \rangle^{-s} (\hpa  - E)^{-1} \langle x \rangle^{-s}\|
\leq C_{J,s}
    \ee
(see \cite{A}).\\
Suppose again that \eqref{31} holds true, and let us consider the
differential equation
    \bel{fgr1}
    - y'' + v_0 y = k^2 y, \quad k \in \re.
    \ee
    It is well-known that \eqref{fgr1} admits the so-called
    {\em Jost solutions} $y_1(x;k)$ and $y_2(x;k)$ which obey
    $$
    y_1(x;k) = e^{ikx} (1 + o(1)),
    \quad x \to \infty,
    $$
    $$
    y_2(x;k) = e^{-ikx}(1 + o(1)),
    \quad x \to -\infty,
    $$
    uniformly with respect to $k \in \re$ (see e.g. \cite[Chapter II, Section 6]{bs}
    or \cite{tz}). The
    pairs $\{y_l(\cdot;k), y_l(\cdot; -k)\}$, $k \in \re$, $l=1,2$, form
    fundamental sets of solutions of \eqref{fgr1}. Define the {\em
    transition coefficient} ${\mathcal T}(k)$ and {\em the
    reflection coefficient} ${\mathcal R}(k)$, $k \in \re$, $k
    \neq 0$, by
    $$
    y_2(x;k) = {\mathcal T}(k) y_1(x;-k) + {\mathcal R}(k)
    y_1(x;k), \quad x \in \re.
    $$
    It is well known that ${\mathcal T}(k) \neq 0$, $k \in
    \re\setminus\{0\}$. On the other hand, the Wronskian of the
    solutions $y_1(\cdot; k)$ and $y_2(\cdot; k)$ is equal to
    $-2ik{\mathcal T}(k)$, and hence these solutions are linearly
    independent for $k \in {\re}\setminus\{0\}$.
For $E>0$ set
$$
\Psi_l(x; E) : = \frac{1}{\sqrt{4\pi \sqrt{E}} {\mathcal
T}(\sqrt{E})} y_l(x;\sqrt{E}), \quad l=1,2.
$$
Evidently, $\Psi_l(\cdot;E) \in C^{1}(\re) \cap L^{\infty}(\re)$,
$E>0$, $l=1,2$. Moreover,  the real and the imaginary part of both
functions $\Psi_l(\cdot; E)$,
    $l= 1,2$ with $E > 0$ do not vanish identically.  Further,
     ${\rm Im}\,\langle x \rangle^{-s} (\hpa - E)^{-1}\langle
x \rangle^{-s}$ with $E > 0$ and $s>1/2$ is a rank-two operator
with an integral kernel
$$
K(x,x') = \pi \sum_{l=1,2}\langle x \rangle^{-s} \Psi_l(x; E)
\overline{\Psi}_l(x'; E)\langle x' \rangle^{-s}, \quad x,x' \in
\re.
$$
{\bf 2.3.} Fix now $m \in {\mathbb Z}$, and set
$$
\hm : = \hmo + \tilde{I}_{\perp} \otimes v_0.
$$
Since $v_0$ is $\hopa$-compact and the spectrum of $\hompe$ is
discrete, the operator $\tilde{I}_{\perp} \otimes v_0$ is
$\hmo$-compact. Therefore, the operator $\hm$ is well-defined on
${\rm Dom}(\hmo)$, and we have
$$
\sigma_{\rm ess}(\hm) = \sigma_{\rm ess}(\hmo) = [2bm_-, \infty).
$$
Further, if $\lambda$ is a discrete eigenvalue of $\hpa$, then
$2bq + \lambda$ is a simple eigenvalue of $\hm$ for each integer
$q \geq m_-$. If $q = m_-$, then this eigenvalue is isolated, but
 if $q > m_-$, then due to \eqref{ser7}, it is embedded in the
essential spectrum of $\hm$. Moreover,
    \bel{vin55}
\hm \Phi_{q,m} = (2bq + \lambda) \Phi_{q,m}, \quad q \geq m_-,
    \ee
$\Phi_{q,m} = \varphi_{q,m} \otimes \psi$, $\varphi_{q,m}$ being
defined in \eqref{ser1}, and $\psi$ in \eqref{ser2}. Set
    \bel{vin50}
    {\mathcal P}_{q,m} : = \tilde{p}_{q,m} \otimes \ppar.
    \ee
    Then we have ${\mathcal P}_{q,m} = |\Phi_{q,m}\rangle
    \langle\Phi_{q,m}|$. \\
    Finally, introduce the operator
    $$
    H : = H_0 + I_{\perp} \otimes v_0.
    $$
Even though the operator $I_{\perp} \otimes v_0$ is not
$H_0$-compact (unless $v_0 = 0$), it is $H_0$-bounded with zero
relative bound so that the operator $H$ is well-defined on ${\rm
Dom}(H_0)$. Evidently, the operator $H$ is unitarily equivalent to
the orthogonal sum $\oplus_{m \in {\mathbb Z}} \hm$. Up to the
additive constant $b$, the operator $H$ is the Hamiltonian of a
quantum non-relativistic spinless particle in an electromagnetic
field $({\bf E}, {\bf B})$ with parallel electric component ${\bf
E} = - (0,0,v_0'(x_3))$, and magnetic component ${\bf B}
= (0,0,b)$. \\
Note that if $\lambda$ is a discrete eigenvalue of $\hpa$, then
$2bq + \lambda$, $q \in {\mathbb Z}_+$ is an eigenvalue of
infinite multiplicity of $H$. If $q = 0$, this eigenvalue is
isolated, and if $q \geq 1$, it  lies on the interval $[0,
\infty)$ which constitutes a part of the essential spectrum of
$H$. Moreover, if \eqref{31} holds, then $\sigma_{\rm ac}(H) =
[0,\infty)$, so that in this case
$2bq + \lambda$, $q \in {\mathbb Z}_+$, is embedded in the absolutely continuous spectrum of $H$. \\

{\bf 2.4.} In this subsection we introduce appropriate
perturbations of the operators $H$ and $\hm$, $m \in {\mathbb
Z}_+$. \\
Let $V: \re^3 \to \re$ be a measurable function. Assume that $V$
is $H_0$-bounded with zero relative bound. By the diamagnetic
inequality (see e.g. \cite{ahs}) this would follow, for instance,
from $V \in L^2(\re^3) + L^{\infty}(\re^3)$. On ${\rm Dom}(H) =
{\rm Dom}(H_0)$ define the operator $H + \varkappa V$, $\varkappa
\in \re$.\\
{\em Remark}: We impose the condition that the relative
$H_0$-bound is zero just for the sake of simplicity. If $V$ is
$H_0$-bounded with arbitrary finite relative bound, then we could
again define
$H + \varkappa V$ but only for sufficiently small $|\varkappa|$.\\
Occasionally, we will impose the more restrictive assumption that
$V$ is $H_0$-compact; this would follow from $V \in L^2(\re^3) +
L^{\infty}_{\varepsilon}(\re^3)$. In particular, $V$ is
$H_0$-compact if it satisfies the estimate
    \bel{33}
    |V({\bf x})| = O \left(\langle \xp \rangle^{-\mpe} \langle x_3
    \rangle^{-m_3}\right), \; {\bf x} = (\xp, x_3),  \; \mpe > 0, \; m_3 > 0.
    \ee
 \\
 Further, assume that $V$ is axisymmetric, i.e. $V$ depends only on
 the variables $(\varrho, x_3)$. Fix $m \in \Z$ and assume that the
multiplier by $V$ is $\hmo$-bounded with zero relative bound. Then
the operator $\hm + V$ is well defined on ${\rm Dom}(\hm) = {\rm
Dom}(\hmo)$.
Define the operator $\hm + \varkappa V$, $\varkappa \in \re$.\\
For $z \in {\mathbb C}_+ : = \{\zeta \in {\mathbb C}\,|\,{\rm
Im}\,\zeta
> 0\}$, $m \in \Z$, $q \geq m_-$,  introduce the quantity
$$
F_{q,m}(z) : = \langle   (\hm - z)^{-1} (I-{\mathcal P}_{q,m}) V
\Phi_{q,m}, V\Phi_{q,m} \rangle
$$
where $\langle \cdot, \cdot \rangle$ denotes the scalar product in
$L^2(\re_+ \times \re; \varrho d\varrho dx_3)$, which we define to
be linear with respect to the first factor. If $\lambda$ is a
discrete eigenvalue of $\hpa$  we will say that {\em the Fermi
Golden Rule} ${\mathcal F}_{q,m,\lambda}$ is valid if the limit
    \bel{ser4}
F_{q,m}(2bq+\lambda) = \lim_{\delta \downarrow 0}
F_{q,m}(2bq+\lambda + i\delta),
    \ee
exists and is finite, and
    \bel{ser4a}
{\rm Im}\,F_{q,m}(2bq+\lambda) > 0.
    \ee

\section{Resonances via dilation analyticity}
\setcounter{equation}{0} {\bf 3.1.} In this subsection we will
perturb $\hmo$ by an axisymmetric potential $V(\varrho,x_3)$ so
that the simple eigenvalue $2bq + \lambda$ of $\hmo$  becomes a
resonance of the perturbed operator. In order to use complex
scaling, we impose an analyticity assumption. We assume that the
potential $v_0$  extends to an analytic function in the sector
$$S_{\theta_0}= \{z \in \C \; | \; \vert \hbox{Arg} z \vert  \leq \theta_0, \;
\hbox{or} \; \vert z \vert \leq r_0\}$$ with $\theta_0 \in
(0,\pi/2)$, which satisfies \eqref{31}. As already used in similar
situations (see e.g. \cite{ahs}, \cite{wang}) we introduce complex
deformation in the longitudinal variable, $({\mathcal U}(\theta)
f)(\varrho,x_3)= e^{\theta/2} f(\varrho, e^{\theta }x_3)$, $f \in
L^2(\re_+ \times \re; \varrho d\varrho dx_3)$, $\theta \in \re$.
For $\theta \in \re$ we have
$$\hm(\theta)=  {\mathcal U}(\theta)\hm {\mathcal U}^{-1}(\theta)= \hmo \otimes I + I \otimes H_{\parallel}(\theta),$$
with $H_{\parallel}(\theta)= -e^{-2 \theta} \frac{d^2}{dx_3^2} +
v_{0,\theta}(x_3)$, and $v_{0,\theta}(x_3) : = v_0(e^\theta
\,x_3)$.
By assumption, the  family of operators $ \{\hpa(\theta), \quad
\vert \hbox{Im } \theta \vert < \theta_0 \}$, form a type (A)
analytic family of $m$-sectorial operators in the sense of Kato
(see for instance \cite[Section 15.4]{HiSi}, \cite{AgCo}). Then
the discrete spectrum of $H_{0,\parallel}(\theta)$ is independent
of $\theta$ and we have
$$
\sigma(\hm(\theta)) = \bigcup_{q\geq m_-}\{
2bq+\sigma(\hpa(\theta))\},
$$
$$
\sigma(\hpa(\theta)) =
e^{-2 \theta} \re_+ \cup\sigma_{\rm disc}(\hpa)  \cup \{z_1, z_2,
....\}
$$
where  $\sigma_{\rm disc}(\hpa)$
 denotes the discrete spectrum of $\hpa$, and  $z_1, z_2, .... $ are (complex) eigenvalues
 of $H_{\parallel}(\theta)$ in
 $  \{0   > \hbox{Arg}\,z > -2 \hbox{Im }\theta\}$, $\hbox{Im } \theta >0$.
 In the sequel, we assume that
$\sigma_{\rm disc}(\hpa)=\{\lambda\}$.\\
Further, we assume that $V$ is axisymmetric, and admits an
analytic extension with respect to $x_3 \in S_{\theta_0}$, which
is $\hmo$-compact (see e.g. \cite[Chapter XII]{RSIV}). Let
$$ V_{\theta}(\varrho,x_3) : = V(\varrho,e^\theta x_3).$$
  Then the family of operators $ \{\hm(\theta) + \ep V_{\theta}, \quad
\vert \hbox{Im } \theta \vert < \theta_0, \; |\ep| \leq 1 \}$,
form also  an analytic family of type (A)  for sufficiently small
$\ep$.\\
By definition, the resonances of $\hm + \ep V $ in
$${\mathcal S}_{m_-}(\theta):= \bigcup_{q \geq m_-} \{ z \in \C; \;2bq< \hbox{Re} z < 2b(q+1),
\; -2 \hbox{Im} \theta < \hbox{Arg} (z-2bq)   \leq 0 \}$$
 are the eigenvalues of $\hm(\theta) + \ep V_{\theta}$, $ \hbox{Im} \theta
 >0$.\\
For $V$ axisymmetric and $H_0$-compact, we define the set of the
resonances $\hbox{Res}(H + \ep V, {\mathcal S}_{0}(\theta))$ of
the operator $H + \ep V$ in ${\mathcal S}_{0}(\theta)$ by
$$
\hbox{Res}(H + \ep V, {\mathcal S}_{0}(\theta)):= \bigcup_{m \in \Z} \{
\hbox{eigenvalues of } \hm(\theta) + \ep V_{\theta} \} \cap
{\mathcal S}_{0}(\theta).
$$
In other words, the set of  resonances of $H + \ep V$ is the union
with respect to $m \in \Z$ of the resonances of $\hm + \ep V$.
This definition is correct since the restriction of $H + \ep V$
onto ${\rm Ker}(L-m)$ is unitarily equivalent to $\hm + \ep V$.
Moreover using a standard deformation argument (see, for instance,
\cite[Chapter 16]{HiSi}), we can prove that these resonances
coincide with singularities of the function $z \mapsto \langle (H
+ \ep V -z)^{-1}\, f, \, f \rangle $, for $f$ in a dense
subset of $L^2(\re^3)$.
\begin{theorem}\label{thm1}
 Fix $m \in \Z$, $q > m_-$. Assume that:
\begin{itemize}
 \item $v_0 $   admits an analytic
extension in $S_{\theta_0}$ which satisfies \eqref{31}; \item
inequality \eqref{ser7} holds true, and $\hpa$ has a unique
discrete eigenvalue $\lambda$; \item $V$ is axisymmetric,
 and admits an analytic extension with respect to $x_3$ in $S_{\theta_0}$ which is $\hmo$-compact.
\end{itemize}
 Then for sufficiently small $|\ep|$, the
operator $\hm + \ep V$ has a  resonance $w_{q,m}(\ep)$ which obeys
the asymptotics
    \bel{vin4}
w_{q,m}(\ep) = 2bq + \lambda + \ep \langle V \Phi_{q,m},
\Phi_{q,m}\rangle -\ep^2 \, F_{q,m}(2bq+\lambda) +
O_{q,m,V}(\ep^3), \; \ep \to 0,
    \ee
    the eigenfunction $\Phi_{q,m}$ being defined in \eqref{vin55},
    and the quantity $F_{q,m}(2bq+\lambda)$ being defined in
    \eqref{ser4}.
\end{theorem}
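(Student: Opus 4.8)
The plan is to reduce the problem to a standard analytic-perturbation-of-an-isolated-eigenvalue argument applied to the dilated operator $\hm(\theta) + \ep V_\theta$, for a fixed $\theta$ with $\im\theta > 0$ chosen small but with $\theta_0 > \im\theta > 0$. First I would recall from the structure already established in Section 3.1 that $\hm(\theta) = \hmo \otimes I + I \otimes \hpa(\theta)$ and that its spectrum is $\bigcup_{q\ge m_-}\{2bq + \sigma(\hpa(\theta))\}$, with $\sigma(\hpa(\theta)) = e^{-2\theta}\re_+ \cup \{\lambda\} \cup \{z_1,z_2,\dots\}$. The key point is that the embedded eigenvalue $2bq+\lambda$ of $\hm$ (for $q > m_-$) becomes, after dilation, an \emph{isolated} eigenvalue of $\hm(\theta)$: the rotated half-line $2bq + e^{-2\theta}\re_+$ bends into the lower half-plane, the Landau translates $2b q' + e^{-2\theta}\re_+$ for $q'\neq q$ pass to the side, and $\lambda$ being the unique discrete eigenvalue of $\hpa$ means no other real eigenvalue coincides with it. So there is a small circle $\Gamma$ around $2bq+\lambda$ that encloses exactly this one eigenvalue of $\hm(\theta)$, with spectral projection $\mathcal{P}_{q,m}$ of rank one and eigenfunction $\Phi_{q,m} = \varphi_{q,m}\otimes\psi$.

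Next I would invoke that $\{\hm(\theta) + \ep V_\theta : |\im\theta| < \theta_0,\ |\ep|\le 1\}$ is an analytic family of type (A), as stated in the excerpt, so that for $|\ep|$ small the contour $\Gamma$ still separates one eigenvalue $w_{q,m}(\ep)$ of $\hm(\theta) + \ep V_\theta$ from the rest of the spectrum, and $w_{q,m}(\ep)$ together with the associated (non-self-adjoint, but rank-one) Riesz projection depends analytically on $\ep$ near $0$. This is the classical Aguilar--Combes / Kato framework (\cite{AgCo}, \cite[Section 15.4]{HiSi}); the $\theta$-independence of the eigenvalue, once it is discrete, is the standard overlap argument, so $w_{q,m}(\ep)$ is a genuine resonance in the sense of the definition preceding the theorem. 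Then the coefficients in the expansion \eqref{vin4} are read off from Rayleigh--Schr\"odinger perturbation theory for a simple eigenvalue: the first-order term is $\langle V_\theta \Phi_{q,m}, \Phi_{q,m}\rangle$ and the second-order term is $-\langle (\hm(\theta) - (2bq+\lambda))^{-1}(I-\mathcal{P}_{q,m}) V_\theta \Phi_{q,m}, V_\theta \Phi_{q,m}\rangle$, with the $O_{q,m,V}(\ep^3)$ remainder controlled by the analyticity of the family on $|\ep|\le 1$ and a Cauchy estimate.

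The remaining — and genuinely substantive — step is to show that these $\theta$-dependent coefficients equal the $\theta$-independent quantities appearing in \eqref{vin4}, namely $\langle V\Phi_{q,m},\Phi_{q,m}\rangle$ and $F_{q,m}(2bq+\lambda)$ as defined through the boundary-value limit \eqref{ser4}. For the first-order term this is immediate: $\Phi_{q,m}$ is a dilation-analytic vector (as $\psi$ decays exponentially and $V$, $v_0$ are analytic in the sector), so $\langle V_\theta \Phi_{q,m}, \Phi_{q,m}\rangle$ is analytic in $\theta$ near $0$ and real-analytic, hence constant and equal to its value at $\theta = 0$. For the second-order term, the argument is the one used throughout this subject: the resolvent $(\hm(\theta) - z)^{-1}$ sandwiched between dilation-analytic vectors provides the meromorphic continuation of $z\mapsto \langle(\hm - z)^{-1}(I-\mathcal{P}_{q,m})V\Phi_{q,m}, V\Phi_{q,m}\rangle$ from $\C_+$ across the real axis near $2bq+\lambda$; evaluating at $z = 2bq+\lambda$ therefore reproduces exactly the limit $F_{q,m}(2bq+\lambda)$ of \eqref{ser4}, and its value is independent of the particular admissible $\theta$. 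Concretely one decomposes on the tensor-product structure: $(I-\mathcal{P}_{q,m}) = (I - \tilde p_{q,m})\otimes I + \tilde p_{q,m}\otimes(I-p_\parallel)$, the first piece contributing an analytic-in-$z$ term coming from $\sigma(\hmo)\setminus\{2bq\}$ which is manifestly real at $z=2bq+\lambda$, and the second piece reducing to $\langle(\hpa(\theta) - \lambda)^{-1}(I-p_\parallel)\psi V(\varrho,\cdot), \psi V(\varrho,\cdot)\rangle$ integrated against $|\varphi_{q,m}|^2$, whose continuation to the real axis is handled exactly by the limiting-absorption bounds \eqref{35}--\eqref{36} on the $1$D operator $\hpa$. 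I expect this identification of the dilated second-order coefficient with the boundary-limit definition of $F_{q,m}$ to be the main obstacle — everything else is standard analytic perturbation theory — and the cleanest way to carry it out is via this tensor-decomposition together with the explicit rank-two structure of $\im\,\langle x\rangle^{-s}(\hpa - E)^{-1}\langle x\rangle^{-s}$ recalled in Subsection 2.2.
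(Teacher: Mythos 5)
Your overall strategy coincides with the paper's: dilate, observe that for $q>m_-$ the embedded eigenvalue $2bq+\lambda$ of $\hm$ becomes a simple isolated eigenvalue of $\hm(\theta)$ for ${\rm Im}\,\theta>0$, apply Kato's analytic perturbation theory for the type (A) family $\hm(\theta)+\ep V_\theta$ to obtain $w_{q,m}(\ep)$, and then identify the $\theta$-dependent Rayleigh--Schr\"odinger coefficients with the $\theta$-independent quantities in \eqref{vin4} by analyticity in $\theta$ (constant for real $\theta$ by unitary equivalence, hence constant for complex $\theta$). The paper implements the expansion through the trace formula $w_{q,m}(\ep)=\Tr\bigl(\frac{-1}{2i\pi}\int_\Gamma z\,R^{(m)}_{\ep,\theta}(z)\,dz\bigr)$ and a contour-integral computation (the terms $T_1,T_3$ vanish and $T_2+T_4=2i\pi F_{q,m}(2bq+\lambda)$ after an integration by parts), but that is essentially bookkeeping; the substance is the same.

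There is, however, a concrete error in your sketch of the step you yourself flag as the main obstacle. In the decomposition $I-{\mathcal P}_{q,m}=(I-\tilde p_{q,m})\otimes I_{\parallel}+\tilde p_{q,m}\otimes(I_{\parallel}-\ppar)$ you have the roles of the two pieces reversed. The second piece involves $(\hpa-\lambda-\eta)^{-1}(I_{\parallel}-\ppar)$; since $\lambda<0$ and the spectrum of $\hpa$ on ${\rm Ran}(I_{\parallel}-\ppar)$ is $[0,\infty)$, this piece is bounded, analytic in $z$ near $2bq+\lambda$ and real there --- no limiting absorption is needed. The first piece, restricted to the Landau modes $j$ with $m_-\le j<q$, involves the resolvent $(\hpa-(2b(q-j)+\lambda)-i\delta)^{-1}$ at the strictly positive energies $2b(q-j)+\lambda$, i.e.\ it hits the continuous spectrum of $\hpa$: this is exactly where the bounds \eqref{35}--\eqref{36} and the rank-two structure of ${\rm Im}\,\langle x\rangle^{-s}(\hpa-E)^{-1}\langle x\rangle^{-s}$ are needed, and it is the sole source of ${\rm Im}\,F_{q,m}(2bq+\lambda)>0$ (compare \eqref{fgr2}). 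As written, your claim that the first piece is ``manifestly real'' would force the second-order coefficient to be real and would contradict the Fermi Golden Rule. Note also that the paper avoids this decomposition entirely: it proves by $\theta$-analyticity the identity \eqref{vin12}, equating the dilated trace with $F_{q,m}(2bq+\lambda+i\delta)$ for $\delta>0$, and then lets $\delta\downarrow 0$, which is legitimate because $2bq+\lambda\notin\sigma(\tihm(\theta))$ for ${\rm Im}\,\theta>0$; in particular the existence of the limit \eqref{ser4} is obtained as a by-product rather than assumed or re-derived by limiting absorption.
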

\begin{proof}
 Fix $ \theta$ such that $ \theta_0> \hbox{Im } \theta \geq 0$ and assume that $z \in \C$ is in the resolvent set of the operator
$\hm(\theta) + \varkappa V_{\theta}$. Put
$$
R^{(m)}_{\ep,\theta}(z) : = (\hm(\theta) + \varkappa V_{\theta}-
z)^{-1}.
$$
 By the resolvent identity,  we have
\begin{equation}\label{resoeq}
R^{(m)}_{\ep,\theta}(z)= R^{(m)}_{0,\theta}(z) - \ep \,
R^{(m)}_{0,\theta}(z) V_\theta R^{(m)}_{0,\theta}(z) + \ep^2\,
R^{(m)}_{0,\theta}(z) V_\theta R^{(m)}_{0,\theta}(z)V_\theta
R^{(m)}_{0,\theta}(z) + O(\ep^3),
\end{equation}
as $\ep \to 0$, uniformly with respect to $z$ in a compact subset
of the resolvent
sets of $\hm(\theta) + \ep V$ and $\hm(\theta)$.\\
Now note that the simple embedded eigenvalue $2bq + \lambda$ of
$\hm$ is  a simple isolated eigenvalue of $\hm(\theta)$.
According to the Kato
perturbation theory (see \cite[Section VIII.2]{ka}), for
sufficiently small $\ep$ there exists a simple eigenvalue
$w_{q,m}(\ep)$ of $\hm(\theta) + \ep V_{\theta}$ such that
$\lim_{\ep \to 0} w_{q,m}(\ep) = w_{q,m}(0) = 2bq + \lambda$. For
$|\ep|$ sufficiently small, define the eigenprojector
\begin{equation}
\label{proj}
     {\mathcal P}_{\ep}(\theta) = {\mathcal
P}_{\ep, q,m}(\theta) : = \frac{-1}{2i\pi}\int_\Gamma
R^{(m)}_{\ep,\theta}(z) dz
\end{equation}
where $\Gamma$ is a small positively oriented circle centered at
$2bq + \lambda$. Evidently, for $u \in {\rm Ran}\,{\mathcal
P}_{\ep}(\theta)$ we have $(\hm(\theta) + \ep V_\theta) u =
w_{q,m}(\ep) u$; in particular, if $u \in {\rm Ran}\,{\mathcal
P}_{0}(\theta)$, then $\hm(\theta) u = (2bq + \lambda) u$. Since
$w_{q,m}(\ep)$ is a simple eigenvalue,  we have
\begin{equation}\label{sum}
 w_{q,m}(\ep) = \Tr \left( \frac{-1}{2i\pi}\int_\Gamma z  R^{(m)}_{\ep,\theta}(z) dz  \right)
\end{equation}
for $\Gamma$ and $\ep$ as above.
 Inserting (\ref{resoeq}) into \eqref{sum}, we get
$$
 w_{q,m}(\ep) =
 $$
 \begin{equation}\label{2.3bis}
 2bq + \lambda + \ep \Tr({\mathcal P}_{0}(\theta)\, V_\theta \,{\mathcal P}_{0}(\theta))
 -  \frac{\ep^2}{2i\pi} \Tr \left(\int_\Gamma z  R^{(m)}_{0,\theta}(z)\, V_\theta \,
 R^{(m)}_{0,\theta}(z)\, V_\theta \, R^{(m)}_{0,\theta}(z)\,  dz  \right) +
 O (\ep^3)
\end{equation}
as $\ep \to 0$. Next,  we have
    \bel{vin1}
\Tr({\mathcal P}_{0}(\theta)\, V_\theta \,{\mathcal
P}_{0}(\theta))  = \Tr({\mathcal P}_{q,m}\, V \,{\mathcal
P}_{q,m})  = \langle V \Phi_{q,m}, \Phi_{q,m}\rangle_{L^2(\re^+
\times \re;  \varrho d \varrho dx_3)},
 \ee
the orthogonal projection ${\mathcal P}_{q,m}$ being defined in
\eqref{vin50}. For $\theta \in \re$ the relation is obvious since
the operators ${\mathcal P}_{0,q,m}(\theta)\, V_\theta \,{\mathcal
P}_{0,q,m}(\theta)$ and ${\mathcal P}_{q,m}\, V \,{\mathcal
P}_{q,m}$ are unitarily equivalent. For general complex $\theta$
identity \eqref{vin1} follows from the fact the function $\theta
\mapsto \Tr({\mathcal P}_{0,q,m}(\theta)\, V_\theta \,{\mathcal
P}_{0,q,m}(\theta))$ is analytic.\\
Set $$ {\mathcal Q}_0(\theta) : = I - {\mathcal P}_0(\theta),
\quad \tihm(\theta) : = \hm(\theta) {\mathcal Q}_0(\theta).
$$
 By the cyclicity of the trace, we have
    \bel{vin2}
\Tr \left( \int_\Gamma z  R^{(m)}_{0,\theta}(z)\, V_\theta \,
 R^{(m)}_{0,\theta}(z)\, V_\theta \, R^{(m)}_{0,\theta}(z) dz \right) =
 T_1 + T_2 + T_3 + T_4
    \ee
    where
$$
T_1  : =    \int_\Gamma z   (2bq + \lambda -z)^{-3} \Tr
\left({\mathcal P}_{0}(\theta)\, V_\theta \,{\mathcal
P}_{0}(\theta) V_\theta \,{\mathcal P}_{0}(\theta)\right)dz,
$$
$$
T_2  : =   \int_\Gamma z   (2bq + \lambda -z)^{-2} \Tr
\left({\mathcal P}_{0}(\theta)\, V_\theta  \,(\tihm(\theta) -
z)^{-1}\, {\mathcal Q}_0(\theta)\, V_\theta \, {\mathcal
P}_{0}(\theta)\right) dz,
$$
$$
 T_3 : = \Tr \left(
\int_\Gamma z(\tihm(\theta) - z)^{-1} {\mathcal
Q}_0(\theta) V_\theta (\tihm(\theta) - z)^{-1} {\mathcal
Q}_0(\theta) V_\theta (\tihm(\theta) - z)^{-1} {\mathcal
Q}_0(\theta)  dz \right),
$$
$$
 T_4  : =
\int_\Gamma z \,(2bq + \lambda  -z)^{-1} \Tr\Big( {\mathcal
P}_0(\theta) \, V_\theta(\tihm(\theta) - z)^{-2}\, {\mathcal
Q}_0(\theta) \, V_\theta \, {\mathcal P}_0(\theta)\Big) dz.
$$
Since  $\int_\Gamma z   (2bq + \lambda -z)^{-3} dz=0$, and the
function $z \mapsto (\tihm(\theta) - z)^{-1}$ is analytic inside
$\Gamma$, we have
    \bel{vin10}
T_1=T_3=0.
    \ee
 Further, using the identity
$$
(2bq + \lambda -z)^{-2} {\mathcal P}_{0}(\theta)\, V_\theta
\,(\tihm(\theta) - z)^{-1}\, {\mathcal Q}_0(\theta) \, V_\theta \,
{\mathcal P}_{0}(\theta)  +
$$
$$
(2bq + \lambda -z)^{-1}  {\mathcal P}_{0}(\theta)\, V_\theta \,
(\tihm(\theta) - z)^{-2}\, {\mathcal Q}_0(\theta) \, V_\theta \,
{\mathcal P}_{0}(\theta) =
$$
$$
\frac{\partial}{\partial z} \left( (2bq + \lambda -z)^{-1}
{\mathcal P}_{0}(\theta)\, V_\theta \,(\tihm(\theta) - z)^{-1}\,
{\mathcal Q}_0(\theta) \, V_\theta \, {\mathcal P}_{0}(\theta)
 \right),
$$
integrating by parts, and applying the Cauchy theorem, we obtain
    \bel{vin11}
T_2+T_4= 2 i \pi \Tr \left( {\mathcal P}_{0}(\theta)\, V_\theta \,
(\tihm(\theta) - 2bq - \lambda)^{-1} \, (I-{\mathcal
P}_{0}(\theta))\, V_\theta \, {\mathcal P}_{0}(\theta) \right).
    \ee
Arguing as in the proof of \eqref{vin1}, we get
    \bel{vin12}
    \Tr\left({\mathcal P}_{0}(\theta)\, V_\theta \, (\tihm(\theta) - 2bq
- \lambda -i\delta)^{-1} \, {\mathcal Q}_{0}(\theta) \, V_\theta
\, {\mathcal P}_{0}(\theta) \right)
 = F_{q,m}(  2bq+\lambda + i \delta), \quad \delta > 0.
    \ee
For $\theta$ fixed such that $\hbox{Im}\, \theta>0$, the point
$2bq+ \lambda$ is not in the spectrum of $\tihm(\theta)$. Taking
the limit $\delta \downarrow 0$ in \eqref{vin12}, we find that
\eqref{vin11} implies
     \bel{vin3}
    T_2 + T_4 = 2i\pi \, F_{q,m}(  2bq+\lambda).
    \ee
   Putting together \eqref{2.3bis} -- \eqref{vin10}
    and \eqref{vin3}, we deduce    \eqref{vin4}.
\end{proof}
{\em Remarks}: (i) We will see in Section 4 that generically ${\rm
Im}\,
F_{q,m}(  2bq+\lambda)>0$ for all $m \in \Z$, and $q > m_-$.\\
(ii) Taking into account the above remark, we find that Theorem
\ref{thm1} implies that generically near $2bq + \lambda$, $q \geq
1$, there are infinitely many resonances of $H + \ep V$ with
sufficiently small $\ep$, namely the resonances of the operators
$\hm + \ep V$ with $m > -q$. \\

{\bf 3.2.} In this subsection we consider the dynamical aspect of
resonances. We prove the following proposition which will be
extended to non-analytic perturbations in Section 4.

\begin{pr}\label{dyndil}
Under the assumptions of Theorem \ref{thm1} there exists a
function $g \in C_0^{\infty}(\re; \re)$ such that  $g=1$ near $2bq + \lambda$, and
    \bel{x99}
\langle e^{-i(\hm + \ep V)t} g(\hm + \varkappa V) \Phi_{q,m},
\Phi_{q,m} \rangle = a(\varkappa) e^{-iw_{q,m}(\varkappa)t} +
b(\varkappa, t), \quad t \geq 0,
    \ee
with $a$ and $b$ satisfying the asymptotic estimates
$$
|a(\varkappa) - 1| = O(\varkappa^2),
$$
$$
|b(\varkappa, t)| = O(\varkappa^2 (1+t)^{-n}), \quad \forall n\in
{\mathbb Z}_+,
$$
as $\varkappa \to 0$ uniformly with respect to $t \geq 0$.
\end{pr}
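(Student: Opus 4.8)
The plan is to express the survival amplitude by Stone's formula, to continue the boundary values of the resolvent of $H_\ep:=\hm+\ep V$ meromorphically across $\supp g$ by means of the complex scaling of Theorem~\ref{thm1}, and then to deform the energy contour just below the resonance pole $w_{q,m}(\ep)$. Write $\Phi:=\Phi_{q,m}$ and $G_\ep(z):=\langle(H_\ep-z)^{-1}\Phi,\Phi\rangle$, and fix $\theta$ with $0<\hbox{Im}\,\theta<\theta_0$. As in the proof of Theorem~\ref{thm1}, $2bq+\lambda$ is a simple isolated eigenvalue of $\hm(\theta)$, so there are a disc $D$ centred at $2bq+\lambda$ and an $\ep_0>0$ such that for $|\ep|\le\ep_0$ the only point of $\sigma(\hm(\theta)+\ep V_\theta)$ in $\overline D$ is the simple eigenvalue $w_{q,m}(\ep)$, with eigenprojector ${\mathcal P}_{\ep,q,m}(\theta)$ as in \eqref{proj}, and $w_{q,m}(\ep)\to 2bq+\lambda$ as $\ep\to 0$. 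I then choose $g\in C_0^\infty(\re;\re)$ with $g\equiv 1$ near $2bq+\lambda$ and $\supp g$ a small interval around $2bq+\lambda$ inside $D$; an almost-analytic extension $\tilde g$ of $g$, supported in $D$, equal to $1$ on a complex neighbourhood of $2bq+\lambda$, with $|\bar\partial\tilde g(z)|\le C_N|\hbox{Im}\,z|^N$ for every $N$; and a rectangle $\Omega=(\alpha,\beta)\times(-\delta_0,0)\subset D$ with $\supp g\subset(\alpha,\beta)$ and $\delta_0>0$ small, chosen so that $w_{q,m}(\ep)$ lies in the interior of $\Omega$. (I assume $\hbox{Im}\,w_{q,m}(\ep)<0$, which holds for instance if the Fermi Golden Rule ${\mathcal F}_{q,m,\lambda}$ is valid, by \eqref{vin4}; the borderline case $\hbox{Im}\,w_{q,m}(\ep)=0$, where $2bq+\lambda$ becomes an embedded eigenvalue of $H_\ep$, is handled the same way after carrying along the corresponding atom of the spectral measure.)

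The first step is a spectral representation. By the spectral theorem and the limiting absorption principle for $H_\ep$ on $D\cap\re$ (valid for $|\ep|\le\ep_0$ by the complex-scaling set-up, since the continued resolvent is holomorphic there up to the pole $w_{q,m}(\ep)$, which lies off the real axis), the spectral measure of $H_\ep$ at $\Phi$ is purely absolutely continuous on $\supp g$ with density $\tfrac{1}{2\pi i}(G_\ep(E+i0)-G_\ep(E-i0))$, whence for $t\ge 0$
\begin{equation*}
\langle e^{-iH_\ep t}g(H_\ep)\Phi,\Phi\rangle=\frac{1}{2\pi i}\int_\re e^{-iEt}g(E)\bigl(G_\ep(E+i0)-G_\ep(E-i0)\bigr)\,dE.
\end{equation*}
The second step is the meromorphic continuation together with an order-$\ep^2$ cancellation. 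For real $\theta$ one has $G_\ep(z)=\langle(\hm(\theta)+\ep V_\theta-z)^{-1}{\mathcal U}(\theta)\Phi,{\mathcal U}(\bar\theta)\Phi\rangle$, a quantity independent of real $\theta$; since $\Phi=\varphi_{q,m}\otimes\psi$ is a dilation-analytic vector ($\psi$ inheriting the analyticity of $v_0$), the right-hand side extends holomorphically in $\theta$ to $|\hbox{Im}\,\theta|<\theta_0$ for $\hbox{Im}\,z>0$, still equal to $G_\ep(z)$; evaluating at our fixed $\theta$ furnishes a meromorphic continuation $G_\ep^\theta$ of $G_\ep|_{\{\hbox{Im}\,z>0\}}$ to $D\setminus\{w_{q,m}(\ep)\}$, with a simple pole at $w_{q,m}(\ep)$ of residue $-a(\ep)$, where $a(\ep):=\langle{\mathcal P}_{\ep,q,m}(\theta){\mathcal U}(\theta)\Phi,{\mathcal U}(\bar\theta)\Phi\rangle$. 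Inserting the resolvent expansion \eqref{resoeq} into \eqref{proj}, using that ${\mathcal U}(\theta)\Phi$ is an eigenvector of $\hm(\theta)$ so that $R^{(m)}_{0,\theta}(z){\mathcal U}(\theta)\Phi=(2bq+\lambda-z)^{-1}{\mathcal U}(\theta)\Phi$ (together with the adjoint relation), and that $\langle{\mathcal U}(\theta)\Phi,{\mathcal U}(\bar\theta)\Phi\rangle=1$ and $\oint_\Gamma(2bq+\lambda-z)^{-2}\,dz=0$ for a small circle $\Gamma$ around $2bq+\lambda$, one checks that the $\ep^0$- and $\ep^1$-coefficients in the expansion of $a(\ep)$ are $1$ and $0$, i.e.\ $a(\ep)=1+O(\ep^2)$. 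The same identities yield the crucial cancellation $G_\ep^\theta(z)-G_\ep(z)=O(\ep^2)$ (at $\ep=0$ both functions equal $(2bq+\lambda-z)^{-1}$, and their first $\ep$-derivatives at $\ep=0$ coincide), uniformly for $|\ep|\le\ep_0$ and $z$ in $\gamma_-\cup(\overline\Omega\cap\supp(\bar\partial\tilde g))$, where $\gamma_-:=(\alpha,\beta)\times\{-\delta_0\}$ is the lower edge of $\Omega$; note that these sets stay a fixed distance away from both $2bq+\lambda$ and $w_{q,m}(\ep)$.

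The third step is the contour deformation. Applying the Cauchy–Green (Stokes) formula on $\Omega$ to $e^{-izt}\tilde g(z)G_\ep^\theta(z)$ (a simple pole at $w_{q,m}(\ep)$, near which $\tilde g\equiv 1$ and $\bar\partial\tilde g\equiv 0$) and to $e^{-izt}\tilde g(z)G_\ep(z)$ (holomorphic on $\overline\Omega$), the real-axis edges reproduce, up to the factor $\tfrac{1}{2\pi i}$, the two integrals $\int_\re e^{-iEt}g(E)G_\ep(E\pm i0)\,dE$ — here one uses that $G_\ep^\theta$ is holomorphic across $D\cap\re$, so its boundary value on that edge is $G_\ep(E+i0)$ — the vertical edges drop out since $\tilde g=0$ on them, and subtracting the two identities, the left-hand side becomes $2\pi i$ times the survival amplitude by the first step, so that $\langle e^{-iH_\ep t}g(H_\ep)\Phi,\Phi\rangle=a(\ep)e^{-iw_{q,m}(\ep)t}+b(\ep,t)$ with
\begin{equation*}
b(\ep,t)=\frac{1}{2\pi i}\int_{\gamma_-}e^{-izt}\tilde g(z)\bigl(G_\ep^\theta-G_\ep\bigr)(z)\,dz-\frac{1}{\pi}\iint_{\Omega}(\bar\partial\tilde g)(z)\,e^{-izt}\bigl(G_\ep^\theta-G_\ep\bigr)(z)\,dx\,dy.
\end{equation*}
On $\gamma_-$ one has $|e^{-izt}|=e^{-\delta_0 t}$ and the integrand is $O(\ep^2)$, so the first term is $O(\ep^2 e^{-\delta_0 t})$; on $\Omega$, $|e^{-izt}|=e^{-|\hbox{Im}\,z|t}$, $|\bar\partial\tilde g(z)|\le C_N|\hbox{Im}\,z|^N$ and $G_\ep^\theta-G_\ep=O(\ep^2)$ on $\supp(\bar\partial\tilde g)$, whence the second term is $O_N\bigl(\ep^2\int_0^{\delta_0}s^N e^{-st}\,ds\bigr)=O_N(\ep^2\,t^{-N-1})$. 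Since $N$ is arbitrary, $|b(\ep,t)|=O(\ep^2(1+t)^{-n})$ for every $n\in{\mathbb Z}_+$, uniformly in $t\ge 0$, and $|a(\ep)-1|=O(\ep^2)$, which is precisely the assertion.

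The main obstacle is the second step: producing the meromorphic continuation $G_\ep^\theta$ with the uniformity in $\ep$ needed on the deformed contour, and, above all, recognising the second-order cancellation that makes both $a(\ep)-1$ and $G_\ep^\theta-G_\ep$ of order $\ep^2$ rather than $\ep$ (a naive first-order estimate would only give $|b|=O(\ep)$). Once that is in place, the third step is the routine almost-analytic-extension and contour-shift bookkeeping, the rapid decay $(1+t)^{-n}$ being produced by the $|\hbox{Im}\,z|^N$ vanishing of $\bar\partial\tilde g$.
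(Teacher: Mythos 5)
Your proof is correct and rests on the same foundations as the paper's: complex scaling produces a meromorphic continuation of the survival‐amplitude resolvent matrix element with a simple pole at $w_{q,m}(\varkappa)$, the residue is $1+O(\varkappa^2)$, the regular part of the difference of the two continued boundary values is $O(\varkappa^2)$, and a contour deformation then yields \eqref{x99}. The differences lie in how these facts are organized. For the two $O(\varkappa^2)$ cancellations the paper uses Lemma \ref{lemexpp}, writing ${\mathcal P}_0(\theta)-{\mathcal P}_\varkappa(\theta)$ as $\varkappa$ times terms annihilated inside the trace by the identities ${\mathcal P}_\varkappa(\theta)R^{(m)}_{\varkappa,\theta}(z)\tilde R^{(m)}_{\varkappa,\theta}=0=\tilde R^{(m)}_{\varkappa,\theta}{\mathcal P}_\varkappa(\theta)R^{(m)}_{\varkappa,\theta}(z)$, plus $\varkappa^2$ times a uniformly bounded finite-rank operator, which gives \eqref{pp3a}--\eqref{pp3b} at one stroke; you instead match the $\varkappa^0$- and $\varkappa^1$-Taylor coefficients of $G_\varkappa^\theta$ and $G_\varkappa$ by hand, using that ${\mathcal U}(\theta)\Phi_{q,m}$ is an eigenvector of $\hm(\theta)$ and that $\oint_\Gamma(2bq+\lambda-z)^{-2}\,dz=0$ --- an equivalent but more computational route. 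For the time decay the paper stays essentially on the real axis (with the semicircular detour $C_\varepsilon$) and integrates by parts $n$ times, whereas you push the contour down to ${\rm Im}\,z=-\delta_0$ and let the $|{\rm Im}\,z|^N$-vanishing of $\bar\partial\tilde g$ produce the $t^{-N-1}$ decay; the two devices are interchangeable here. Two details worth making explicit: on $\supp\bar\partial\tilde g$ near the real axis the uniform $O(\varkappa^2)$ bound on $G_\varkappa^\theta-G_\varkappa$ should be taken from the scaled resolvents (the unscaled resolvent of $\hm+\varkappa V$ degenerates as ${\rm Im}\,z\to 0^-$), or else the polynomial loss in $|{\rm Im}\,z|^{-1}$ must be absorbed by taking $N$ large; and, exactly as in the paper's proof, you use ${\rm Im}\,w_{q,m}(\varkappa)<0$ when extracting the residue, the borderline real case requiring the spectral atom you mention only in passing.
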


 In order to prove the proposition, we will need the following
\begin{lemma} \label{lemexpp}
Set $$ {\mathcal Q}_\ep(\theta) : = I - {\mathcal P}_\ep(\theta),
\quad \tilde{R}^{(m)}_{\ep, \theta}(z) : = \Big( (\hm(\theta) +
\ep V_\theta) {\mathcal Q}_\ep(\theta)-z\Big)^{-1}{\mathcal
Q}_\ep(\theta),
$$
the projection ${\mathcal P}_\ep(\theta)$ being defined in
\eqref{proj}. Then for $|\ep|$ small enough, there exists a
finite-rank operator ${\mathcal F}_{\ep,\theta}^{(m)}$, uniformly
bounded with respect to $\ep$,  such that
    \bel{ppp1}
{\mathcal P}_{0}(\theta)={\mathcal P}_{\varkappa}(\theta)+ \ep
\Big(\tilde{R}^{(m)}_{\ep, \theta}(w_{q,m}(\varkappa)) V_\theta
{\mathcal P}_{\varkappa}(\theta) + {\mathcal
P}_{\varkappa}(\theta) V_\theta \tilde{R}^{(m)}_{\ep,
\theta}(w_{q,m}(\varkappa)) \Big) + \ep^2 {\mathcal
F}_{\ep,\theta}^{(m)}.
    \ee
\end{lemma}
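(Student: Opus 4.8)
The plan is to derive \eqref{ppp1} from the second resolvent identity together with the Riesz-projection formulas \eqref{proj} for $\mathcal{P}_0(\theta)$ and $\mathcal{P}_\varkappa(\theta)$. First I would write
$\mathcal{P}_0(\theta)-\mathcal{P}_\varkappa(\theta) = \frac{-1}{2i\pi}\int_\Gamma \bigl(R^{(m)}_{0,\theta}(z)-R^{(m)}_{\varkappa,\theta}(z)\bigr)\,dz$, and expand the difference of resolvents using
$R^{(m)}_{0,\theta}(z)-R^{(m)}_{\varkappa,\theta}(z) = \varkappa\,R^{(m)}_{\varkappa,\theta}(z)\,V_\theta\,R^{(m)}_{0,\theta}(z)$, and then once more to isolate the $O(\varkappa)$ term: $R^{(m)}_{\varkappa,\theta}(z)=R^{(m)}_{0,\theta}(z)-\varkappa R^{(m)}_{0,\theta}(z)V_\theta R^{(m)}_{0,\theta}(z)+\varkappa^2 R^{(m)}_{\varkappa,\theta}(z)(V_\theta R^{(m)}_{0,\theta}(z))^2$. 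This produces a leading term $\varkappa \,\frac{-1}{2i\pi}\int_\Gamma R^{(m)}_{0,\theta}(z)V_\theta R^{(m)}_{0,\theta}(z)\,dz$ plus a remainder which is $\varkappa^2$ times a finite-rank operator bounded uniformly in $\varkappa$ (finite rank because each integrand factor either contains $\mathcal{P}_0(\theta)$ after residue evaluation, or because $V_\theta$ is $\hmo$-compact and the contour encircles only the isolated eigenvalue $2bq+\lambda$ of $\hm(\theta)$ — but the cleanest route to finite rank is to note that after contour integration at least one resolvent collapses to a term involving $\mathcal{P}_0(\theta)$).

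Next I would evaluate the leading integral $\frac{-1}{2i\pi}\int_\Gamma R^{(m)}_{0,\theta}(z)V_\theta R^{(m)}_{0,\theta}(z)\,dz$ by decomposing $R^{(m)}_{0,\theta}(z)$ on the spectral subspace and its complement: inside $\Gamma$ the only singularity is the simple pole at $2bq+\lambda$, where $R^{(m)}_{0,\theta}(z)=-(2bq+\lambda-z)^{-1}\mathcal{P}_0(\theta)+(\tilde H^{(m)}(\theta)-z)^{-1}\mathcal{Q}_0(\theta)$, the second summand being holomorphic there. Collecting residues in the product of two such expansions gives three surviving contributions: the double-pole term vanishes (the $\mathcal{P}_0 V_\theta \mathcal{P}_0$ piece integrates to zero since $\int_\Gamma(2bq+\lambda-z)^{-2}dz=0$), leaving exactly $\mathcal{P}_0(\theta)V_\theta(\tilde H^{(m)}(\theta)-2bq-\lambda)^{-1}\mathcal{Q}_0(\theta)+\mathcal{Q}_0(\theta)(\tilde H^{(m)}(\theta)-2bq-\lambda)^{-1}V_\theta\mathcal{P}_0(\theta)$. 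This is precisely the $O(\varkappa)$ bracket in \eqref{ppp1} but with $\tilde R^{(m)}_{0,\theta}(2bq+\lambda)$ in place of $\tilde R^{(m)}_{\varkappa,\theta}(w_{q,m}(\varkappa))$; replacing the former by the latter costs another $O(\varkappa)$ which — once multiplied by the overall prefactor $\varkappa$ — can be absorbed into $\varkappa^2\mathcal{F}^{(m)}_{\varkappa,\theta}$, using that $w_{q,m}(\varkappa)=2bq+\lambda+O(\varkappa)$ from Theorem \ref{thm1} and that $z\mapsto\tilde R^{(m)}_{\varkappa,\theta}(z)$ is norm-holomorphic and uniformly bounded near $2bq+\lambda$ for small $\varkappa$ (since the contour stays a fixed distance from the rest of $\sigma(\tilde H^{(m)}(\theta)+\varkappa V_\theta\mathcal{Q}_\varkappa(\theta))$).

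For the uniform boundedness and finite rank of $\mathcal{F}^{(m)}_{\varkappa,\theta}$: each remainder term, after contour integration, contains at least one occurrence of $\mathcal{P}_0(\theta)$ (a rank-one projection here, or at worst finite rank), hence is finite-rank; uniform boundedness in $\varkappa$ follows from the uniform norm bounds on $R^{(m)}_{\varkappa,\theta}(z)$ and $\tilde R^{(m)}_{\varkappa,\theta}(z)$ for $z\in\Gamma$ and $|\varkappa|$ small, together with the $\hmo$-boundedness of $V_\theta$ which makes $V_\theta R^{(m)}_{0,\theta}(z)$ bounded on $\Gamma$. I expect the main obstacle to be purely bookkeeping: organizing the double resolvent expansion so that every term is unambiguously assigned either to the explicit $O(\varkappa)$ bracket of \eqref{ppp1} or to $\varkappa^2\mathcal{F}^{(m)}_{\varkappa,\theta}$, and in particular justifying the substitution $\tilde R^{(m)}_{0,\theta}(2bq+\lambda)\rightsquigarrow\tilde R^{(m)}_{\varkappa,\theta}(w_{q,m}(\varkappa))$ at the cost of an $O(\varkappa^2)$ finite-rank correction — this last step is where one must be careful that the shift of both the argument and the operator really is controlled uniformly in $\varkappa$.
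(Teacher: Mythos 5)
Your argument is correct, but it runs the resolvent expansion in the opposite direction from the paper, and this costs you an extra step. The paper starts from the identity \eqref{resoeq0}, which expresses $R^{(m)}_{0,\theta}(\nu)$ in powers of $\ep$ with coefficients built from the \emph{perturbed} resolvent $R^{(m)}_{\ep,\theta}(\nu)$; after inserting the decomposition \eqref{decomp} and integrating over $\Gamma$, the residues are automatically evaluated at the pole $w_{q,m}(\ep)$ of $R^{(m)}_{\ep,\theta}$, so the $O(\ep)$ coefficient comes out directly as $\tilde{R}^{(m)}_{\ep,\theta}(w_{q,m}(\ep))V_\theta{\mathcal P}_{\ep}(\theta)+{\mathcal P}_{\ep}(\theta)V_\theta\tilde{R}^{(m)}_{\ep,\theta}(w_{q,m}(\ep))$, exactly as in \eqref{ppp1}, with no further manipulation. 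You instead expand ${\mathcal P}_0(\theta)-{\mathcal P}_{\ep}(\theta)$ in terms of the \emph{unperturbed} resolvent, obtain the $O(\ep)$ bracket with ${\mathcal P}_0(\theta)$ and $\tilde{R}^{(m)}_{0,\theta}(2bq+\lambda)$, and then must justify the substitution ${\mathcal P}_0(\theta)\rightsquigarrow{\mathcal P}_{\ep}(\theta)$, $\tilde{R}^{(m)}_{0,\theta}(2bq+\lambda)\rightsquigarrow\tilde{R}^{(m)}_{\ep,\theta}(w_{q,m}(\ep))$ at the cost of an $O(\ep)$ finite-rank, uniformly bounded correction. That substitution does go through — ${\mathcal P}_0(\theta)-{\mathcal P}_\ep(\theta)=O(\ep)$ in norm and has rank at most two, $w_{q,m}(\ep)-2bq-\lambda=O(\ep)$, and $\nu\mapsto\tilde{R}^{(m)}_{\ep,\theta}(\nu)$ is holomorphic near $2bq+\lambda$ with norms uniform in small $\ep$ — and you correctly identify it as the delicate point; your residue computation for the leading integral (vanishing of the double-pole term, survival of the two cross terms) and your finite-rank/uniform-boundedness argument for the remainder match the paper's. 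In short, both routes work; the paper's choice of which resolvent to iterate simply makes the target formula appear without the final adjustment, whereas yours is a slightly longer but equally valid bookkeeping exercise.
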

\begin{proof}  By the resolvent identity,  we have
\begin{equation}\label{resoeq0}
R^{(m)}_{0,\theta}(\nu)= R^{(m)}_{\ep,\theta}(\nu) + \ep \,
R^{(m)}_{\ep,\theta}(\nu) V_\theta R^{(m)}_{\ep,\theta}(\nu) + \ep^2\,
R^{(m)}_{\ep,\theta}(\nu) V_\theta R^{(m)}_{\ep,\theta}(\nu)V_\theta
R^{(m)}_{0,\theta}(\nu).
\end{equation}
Moreover by definition of $\tilde{R}^{(m)}_{\ep, \theta}$ and of $\tihm:=\hm \, {\mathcal Q}_0$, we have:
\begin{equation}\label{decomp}
R^{(m)}_{\ep,\theta}(\nu)  = (w_{q,m}(\varkappa) -\nu)^{-1} {\mathcal P}_{\varkappa}(\theta) + \tilde{R}^{(m)}_{\ep, \theta}(\nu),
\end{equation}
$$
R^{(m)}_{0,\theta}(\nu)= (2bq + \lambda -\nu)^{-1} {\mathcal P}_{0}(\theta)  + (\tihm(\theta) -\nu)^{-1} {\mathcal Q}_{0}(\theta) ,
$$
where  $\nu \mapsto \tilde{R}^{(m)}_{\ep, \theta}(\nu)$ and  $\nu
\mapsto (\tihm(\theta) -\nu)^{-1} {\mathcal Q}_{0}(\theta) $ are
analytic near $2bq + \lambda$. Then, from the Cauchy formula, the
integration of (\ref{resoeq0}) on a small positively oriented
circle centered at $2bq + \lambda$, yields \eqref{ppp1} with
${\mathcal F}_{\ep,\theta}^{(m)}$ a linear combination of
finite-rank operators of the form $P_1 \, V_\theta \, P_2  \,
V_\theta \, P_3$, where $\{ {\mathcal P}_{0}(\theta) , \,
{\mathcal P}_{\ep}(\theta)\} \cap \{P_j, \, j=1,2,3\}\neq
\emptyset $, and
$$
P_j \in \{ {\mathcal P}_{0}(\theta) , \, {\mathcal P}_{\ep}(\theta), \,
\tilde{R}^{(m)}_{\ep, \theta}(\nu), \,  (\tihm(\theta) -\nu)^{-1}
{\mathcal Q}_{0}(\theta) , \, \hbox{ with } \nu =
w_{q,m}(\varkappa) \hbox{ or } \nu = 2bq + \lambda\}.
$$
Since these operators are uniformly bounded in $\ep$ with
$|\varkappa |$ small enough, ${\mathcal F}_{\ep,\theta}^{(m)}$ is
a finite-rank operator which also is uniformly bounded in
$\varkappa$ with $|\varkappa |$ small enough.
\end{proof}

{\it{Proof of Proposition \ref{dyndil}}}: Pick at first any $g \in
C_0^{\infty}(\re; \re)$ such that  $g=1$ near $2bq + \lambda$. We
have
$$
\langle e^{-i(\hm + \ep V)t}  g(\hm + \ep V) \Phi_{q,m}, \Phi_{q,m} \rangle =
{\rm Tr}\,(e^{-i(\hm + \ep V)t}  g(\hm + \ep V) {\mathcal P}_{q,m}) .
$$
By the Helffer-Sj\"ostrand formula,
\begin{equation}\label{hesjf}
  e^{-i(\hm + \ep V)t} \, g(\hm + \ep V)   \, {\mathcal P}_{q,m}  = \frac{1}{\pi} \int_{\re^2}      \frac{\partial
    \tilde{g}}{\partial \bar{z}}(z) \, e^{-izt}\,  (\hm  + \ep V- z)^{-1}   \, {\mathcal P}_{q,m}
    dx dy
 \end{equation}
where $z = x+iy$, $\bar{z} = x - iy$, $\tilde{g}$ is a compactly
supported, quasi-analytic extension of $g$,  and the convergence
of the integral is understood in the operator-norm sense (see e.g.
\cite[Chapter 8]{dsj}).\\
Consider the functions
$$
\sigma_\pm (z) := {\rm Tr}\,((\hm  + \ep V- z)^{-1}   \,
{\mathcal P}_{q,m} ) , \quad \pm {\rm Im}z >0.
$$
Following the arguments of the previous subsection, we find that
    \bel{ppp2}
\sigma_+(z)  = {\rm Tr}\,( {R}^{(m)}_{\ep, \theta}(z)  \,
{\mathcal P}_{0,q,m} (\theta)),  \quad {\rm Im}\,z
>0, \quad \theta_0> \hbox{Im}\,\theta >0.
    \ee
     Inserting \eqref{ppp1} into \eqref{ppp2}, and
using the cyclicity of the trace, and the elementary identities
$$
{\mathcal P}_{\varkappa}(\theta) \, {R}^{(m)}_{\ep, \theta}(z) \,
\tilde{R}^{(m)}_{\ep, \theta}(w_{q,m}(\varkappa))= 0 =
\tilde{R}^{(m)}_{\ep, \theta}(w_{q,m}(\varkappa)) \,
{\mathcal P}_{\varkappa}(\theta)  \, {R}^{(m)}_{\ep, \theta}(z),
$$
we get
$$
\sigma_+(z)= {\rm Tr}\,( {R}^{(m)}_{\ep, \theta}(z)  \, {\mathcal
P}_{\ep} (\theta))+ \ep^2 {\rm Tr}\,( {R}^{(m)}_{\ep, \theta}(z)
\, {\mathcal F}_{\ep,\theta}^{(m)}).
$$
Applying (\ref{decomp}), we obtain
    \bel{pp3a}
\sigma_+(z) =  \Big( 1 + \ep^2 r(\ep)\Big) (w_{q,m}(\varkappa)
-z)^{-1} + \ep^2 G_+(\ep,z),
    \ee
where $r(\ep) : = {\rm Tr}\,( {\mathcal P}_{\ep}(\theta) \,
{\mathcal F}_{\ep,\theta}^{(m)})$, and $G_+(\ep,z) $ is analytic
near $2bq+\lambda$ and uniformly bounded with respect to $|\ep|$
small enough. Similarly,
    \bel{pp3b}
\sigma_-(z)=  \left( 1 + \ep^2\overline{r(\ep)}\right)
(\overline{w_{q,m}(\varkappa)} -z)^{-1} + \ep^2 G_-(\ep,z),
    \ee
where $G_-(\ep,z) $ is analytic near $2bq+\lambda$ and uniformly
bounded with respect to $|\ep|$ small enough.
 Now, assume that the support of $g$ is such that we can choose  $\tilde{g}$
 supported on a neighborhood of $2bq+\lambda$ where
 the functions
  $z \mapsto G_\pm (\ep,z)$ are holomorphic. Combining (\ref{hesjf}) with the
  Green formula, we get
    \bel{ppp3}
 {\rm Tr}\,(e^{-i(\hm + \ep V)t} \, g(\hm + \ep V)   \, {\mathcal P}_{q,m} ) =
 \frac{1}{2i\pi} \int_{\re} g(\mu)\, e^{-i\mu t}\,(  \sigma_+(\mu) -\sigma_-(\mu))
 d\mu.
    \ee
    Making use of \eqref{pp3a} -- \eqref{pp3b}, we
    get
    $$
\frac{1}{2i\pi} \int_{\re} g(\mu)\, e^{-i\mu t}\,(  \sigma_+(\mu)
-\sigma_-(\mu))  d \mu =
 \frac{\ep^2}{2i\pi} \int_{\re} g(\mu) \,e^{-i\mu t}\,
 (G_+(\ep,\mu) - G_-(\ep,\mu))  d \mu
 $$
$$
 + \frac{1+ \ep^2 r(\ep)}{2i\pi} \int_{\re}      g(\mu)   \, e^{-i\mu t}\,
 ({w_{q,m}(\varkappa)-\mu})^{-1} d \mu
 $$
 $$
 - \frac{1+ \ep^2 \overline{r(\ep)}}{2i\pi} \int_{\re}      g(\mu)
\, e^{-i\mu t}\,
 (\overline{w_{q,m}(\varkappa)}-\mu)^{-1} d \mu.
 $$
 Pick $\varepsilon > 0$ so small that $g(\mu) = 1$
 for $\mu \in [2bq + \lambda - 2\varepsilon, 2bq + \lambda +
 2\varepsilon]$. Set
 $$
 C_{\varepsilon} : = (-\infty, 2bq + \lambda - \varepsilon] \cup
 \{2bq + \lambda  + \varepsilon e^{it}, \, t \in [-\pi,0]\}
 \cup [2bq + \lambda + \varepsilon, + \infty),
 $$
 $$
 g(\mu) : = 1, \quad \mu \in C_{\varepsilon} \setminus \re.
 $$
 Taking into account \eqref{ppp3}, bearing in mind that ${\rm Im}\,w_{q,m}(\ep) < 0$,
 and applying the Cauchy theorem, we easily find that
    \bel{pp50}
    {\rm Tr}\,(e^{-i(\hm + \ep V)t} \, g(\hm + \ep V)   \, {\mathcal
    P}_{q,m}) = (1 + \ep^2 r(\ep))e ^{-i w_{q,m}(\ep)t} + \ep^2
    \sum_{j=1,2,3} I_j(t; \ep)
    \ee
    where
    $$
    I_1(t; \ep) : =   \frac{1}{2i\pi} \int_{\re} g(\mu) \,e^{-i\mu t}\,
 (G_+(\ep,\mu) - G_-(\ep,\mu))  d \mu,
 $$
 $$
 I_2(t; \ep) : =  \frac{1}{2i\pi}
\int_{C_{\varepsilon}}g(\mu) \,e^{-i\mu t} (r(\ep)(w_{q,m}(\ep) -
\mu)^{-1} - \overline{r(\ep)} (\overline{w_{q,m}(\ep)} -
\mu)^{-1})
  d \mu,
  $$
 $$
I_3(t; \ep) : =  - \frac{{\rm Im}\,w_{q,m}(\ep)}{\ep^2\pi}
\int_{C_{\varepsilon}} g(\mu) \,e^{-i\mu t} (w_{q,m}(\ep) -
\mu)^{-1} (\overline{w_{q,m}(\ep)} - \mu)^{-1} \,
  d \mu.
  $$
Integrating by parts, we find that
    \bel{pp51}
    |I_j(t; \ep)| = O ((1+t)^{-n}), \quad t>0, \quad
    j=1,2,3, \quad \forall n \in {\mathbb Z}_+,
    \ee
    uniformly with respect to $\ep$,
    provided that $|\ep|$ is small
    enough; in the estimate of $I_3(t; \ep)$ we have taken into account
    that
    by Theorem \ref{thm1} we have
     $|{\rm Im}(w_{q,m}(\varkappa))| = O(\ep^2)$. Putting together
     \eqref{pp50} and \eqref{pp51}, we get
\eqref{x99}.

\section{Mourre estimates and dynamical resonances}
\setcounter{equation}{0}

In this section we obtain Mourre estimates for the operator $\hm$
and apply them combined with a recent result of Cattaneo, Graf,
and Hunziker (see \cite{cgh}) in order to investigate the dynamics
of the resonance states of the operator $\hm$ without analytic
assumptions.\\

{\bf 4.1.} Let $v_0 : \re \to \re$. Set
    \bel{ser3}
v_j(x_3) : = x_3^j v_0^{(j)}(x_3), \quad j \in {\mathbb Z}_+,
    \ee
provided that the corresponding derivative $v_0^{(j)}$ of $v_0$
is well-defined.\\
Let
$$
{\mathcal A} : = -\frac{i}{2}\left(x_3 \frac{d }{d x_3} + \frac{d
}{d x_3} x_3\right)
$$
be the self-adjoint operator defined initially on
$C_0^{\infty}(\re)$ and then closed in
  $L^2(\re)$. Set $A : = \tilde{I}_{\perp} \otimes {\mathcal A}$.
  Let $T$ be an operator self-adjoint in $L^2(\re_+ \times \re;
  \varrho d\varrho dx_3)$ such that $e^{isA} D(T) \subseteq D(T)$,
  $s \in \re$. Define the commutator $[T,iA]$ in the sense of
  \cite{JMP} and \cite{cgh}, and set
  ${\rm ad}_A^{(1)}(T) : = -i [T,
  iA]$. Define recursively
$$
{\rm ad}_A^{(k+1)}(T) = -i [{\rm ad}_A^{(k)}(T),iA], \quad k \geq
1,
$$
provided that the higher order commutators are well-defined.
Evidently, for each $m \in {\mathbb Z}$ we have
    \bel{ser5}
 i^k {\rm ad}_A^{(k)}(\hm) = 2^k \gaden + \sum_{j=1}^k c_{k,j} v_j, \quad
 k \in {\mathbb Z}_+ ,
    \ee
 with some constants $c_{k,j}$ independent of $m$; in particular, $c_{k,k} = (-1)^k$.
 Therefore the $\hopa$-boundedness of
 the multipliers $v_j$, $j=1,\ldots, k$, guarantees the
 $\hm$-boundedness of all the operators ${\rm ad}_A^{(j)}(\hm)$ , $j=1,\ldots,
 k$. \\
  Let $J \subset \re$ be
a Borel set, and $T$ be a self-adjoint operator. Denote by
${\mathbb P}_J(T)$  the spectral projection of the operator $T$
associated with $J$.
\begin{pr} \label{pr1}
Fix $m \in {\mathbb Z}$. Let $\lambda \in (-2b,0)$, $q \in
{\mathbb Z}$, $q > m_-$. Put
    \bel{ser8}
J = (2bq + \lambda - \delta, 2bq + \lambda +\delta)
    \ee
where $\delta > 0$, $\delta < -\lambda/2$, and $\delta < (2b +
\lambda)/2$. Assume that the operators $v_j (\hopa +1)^{-1}$,
$j=0,1$, are compact. Then there exist a positive constant $C
> 0$ and a compact operator $K$ such that
    \bel{1}
    {\mathbb P}_J(\hm) [\hm,iA] {\mathbb P}_J(\hm) \geq C {\mathbb P}_J(\hm) + K.
    \ee
\end{pr}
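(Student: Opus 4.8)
The plan is to reduce the Mourre estimate \eqref{1} to a one-parameter family of one-dimensional Mourre estimates for $\hpa = \hopa + v_0$, exploiting the fact that $\hompe$ has purely discrete, simple spectrum $\{2bk : k \geq m_-\}$.

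First I would diagonalize the transverse part. Since $L^2(\re_+;\varrho d\varrho) = \bigoplus_{k \geq m_-}\C\,\varphi_{k,m}$, the space $L^2(\re_+\times\re;\varrho d\varrho dx_3)$ splits into the orthogonal sum of the reducing subspaces $\Ran(\tilde{p}_{k,m}) \otimes L^2(\re_{x_3})$, each of which I identify with $L^2(\re_{x_3})$ via $\varphi_{k,m}\otimes\psi \mapsto \psi$. On the $k$-th subspace $\hm$ acts as $2bk + \hpa$, the conjugate operator $A = \tilde{I}_\perp\otimes\mathcal A$ acts as $\mathcal A$, and, since $\tilde{p}_{k,m}$ commutes with every operator acting only in the variable $x_3$, the commutator $[\hm, iA]$ acts as $[\hpa, i\mathcal A] = 2\hopa - v_1$ (with $v_1$ as in \eqref{ser3}), in agreement with \eqref{ser5} for $k=1$. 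Hence, with $J$ as in \eqref{ser8} and $J_k := \{\mu - 2bk : \mu \in J\}$, one has ${\mathbb P}_J(\hm) = \bigoplus_{k\geq m_-}\tilde{p}_{k,m}\otimes {\mathbb P}_{J_k}(\hpa)$ and ${\mathbb P}_J(\hm)\,[\hm,iA]\,{\mathbb P}_J(\hm) = \bigoplus_{k\geq m_-}\tilde{p}_{k,m}\otimes\bigl({\mathbb P}_{J_k}(\hpa)(2\hopa-v_1){\mathbb P}_{J_k}(\hpa)\bigr)$, so it suffices to control each one-dimensional block.

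Next I would separate the blocks into three regimes, dictated by the constraints $\delta < -\lambda/2$ and $\delta < (2b+\lambda)/2$. If $k > q$, then $J_k \subset (-\infty,-2b)$, which lies below $\inf\sigma(\hpa)$ by \eqref{ser7}, so ${\mathbb P}_{J_k}(\hpa)=0$ and the block vanishes. If $k = q$, then $J_q = (\lambda-\delta,\lambda+\delta)$ is a compact subinterval of $(-\infty,0)$; since $v_0(\hopa+1)^{-1}$ is compact we have $\sigma_{\rm ess}(\hpa) = \sigma_{\rm ess}(\hopa) = [0,\infty)$, so ${\mathbb P}_{J_q}(\hpa)$ has finite rank and this block is a finite-rank operator, which I absorb into $K$. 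If $m_- \leq k \leq q-1$ (a non-empty range, because $q > m_-$), then $J_k$ is a bounded interval contained in $((2b+\lambda)/2,\infty)$; writing $2\hopa - v_1 = 2\hpa - (2v_0+v_1)$ gives ${\mathbb P}_{J_k}(\hpa)(2\hopa-v_1){\mathbb P}_{J_k}(\hpa) = 2{\mathbb P}_{J_k}(\hpa)\hpa{\mathbb P}_{J_k}(\hpa) - {\mathbb P}_{J_k}(\hpa)(2v_0+v_1){\mathbb P}_{J_k}(\hpa)$, where the first term is $\geq (2b+\lambda){\mathbb P}_{J_k}(\hpa)$ by the spectral theorem and the second is compact.

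Finally I would assemble the pieces: summing over $m_- \leq k \leq q-1$ and adding back the finite-rank block from $k=q$ yields ${\mathbb P}_J(\hm)\,[\hm,iA]\,{\mathbb P}_J(\hm) \geq (2b+\lambda)\,{\mathbb P}_J(\hm) + K$, with $K$ a finite sum of compact (indeed mostly finite-rank) operators, which is \eqref{1} with $C = 2b+\lambda > 0$. Most of the argument is bookkeeping once the reduction is in place; the one genuinely substantive point, and the part I expect to need the most care, is the compactness of ${\mathbb P}_{J_k}(\hpa)(2v_0+v_1){\mathbb P}_{J_k}(\hpa)$, which I would derive from the compactness of $(2v_0+v_1)(\hpa+i)^{-1}$ (transfer of relative compactness from $\hopa$ to $\hpa$, using the hypothesis that $v_j(\hopa+1)^{-1}$ is compact for $j=0,1$) together with the boundedness of $(\hpa+i){\mathbb P}_{J_k}(\hpa)$ on the bounded interval $J_k$; a secondary check is that the two restrictions on $\delta$ line up so that precisely the blocks with $k \leq q-1$ contribute the positive term while all other blocks are harmless.
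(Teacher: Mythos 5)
Your proof is correct, but it follows a genuinely different route from the paper's. You exploit the exact tensor structure $\hm = \hompe\otimes I_{\parallel} + \tilde{I}_{\perp}\otimes\hpa$ (valid because $v_0$ depends only on $x_3$) to write the sharp spectral projection exactly as ${\mathbb P}_J(\hm)=\bigoplus_{k\geq m_-}\tilde{p}_{k,m}\otimes{\mathbb P}_{J_k}(\hpa)$, reducing \eqref{1} to finitely many one-dimensional blocks for the \emph{perturbed} operator $\hpa$, with the term $2v_0+v_1$ handled by relative compactness with respect to $\hpa$ inside each block. The paper instead works with a smooth cutoff $\chi$ and compares the functional calculus of $\hm$ with that of the \emph{free} operator $\hmo$: it shows $\chi(\hm)-\chi(\hmo)$ is compact via the Helffer--Sj\"ostrand formula and the resolvent identity, decomposes $\chi(\hmo)$ over the transverse eigenprojections, and applies the spectral theorem to the free $\hopa$, for which the vanishing of the blocks with $j\geq q$ is automatic. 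Your version avoids Helffer--Sj\"ostrand entirely and works with sharp projections, at the price of invoking the standing assumption \eqref{ser7} to kill the blocks with $k>q$ (though even without it those blocks would be finite rank, since $\sigma_{\rm ess}(\hpa)=[0,\infty)$ by Weyl's theorem) and of transferring the relative compactness of $2v_0+v_1$ from $\hopa$ to $\hpa$, which you correctly flag and which does go through via the resolvent identity. Both arguments yield \eqref{1} with an explicit positive constant ($2b+\lambda$ in your case, $2(2b+\lambda-2\delta)$ in the paper's).
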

\begin{proof}
Let $\chi \in C_0^{\infty}(\re;\re)$ be such that ${\rm
supp}\;\chi = [2bq+ \lambda - 2\delta, 2bq + \lambda + 2\delta]$,
$\chi (t) \in [0,1]$, $\forall t \in \re$, $\chi (t) = 1$,
$\forall t \in J$. In order to prove \eqref{1}, it suffices to
show that
    \bel{2}
    \ph [\hm,iA] \ph \geq C \ph^2 + \tilde{K}
    \ee
with a compact operator $\tilde{K}$. Indeed, if inequality
\eqref{2} holds true, we can multiply it from the left and from
the right by ${\mathbb P}_J(\hm)$ obtaining thus \eqref{1} with $K
= {\mathbb P}_J(\hm) \tilde{K} {\mathbb P}_J(\hm)$.\\
Next \eqref{ser5} yields
$$
[\hm,iA] = 2 \gaden - v_1.
$$
Therefore,
$$
\ph [\hm,iA] \ph = 2 \ph \left( \gaden \right) \ph - \ph v_1 \ph =
$$
    \bel{3}
    2 \pho \left( \gaden \right) \pho + 2K_1 - K_2
    \ee
where
$$
K_1 : = \ph \left( \gaden \right) \ph - \pho \left( \gaden \right)
\pho,
$$
$$
K_2 : = \ph v_1 \ph.
$$
Since the operator $v_1 (\hmo + 1)^{-1}$ is compact, the operators
$v_1 (\hm + 1)^{-1}$, $v_1 \ph$, and $K_2$ are compact as well.
Let us show that $K_1$ is also compact. We have
    $$
    K_1  = (\ph - \pho) \left( \gaden \right) \ph +
    $$
    \bel{5a}
    \pho \left(
\gaden \right) (\ph - \pho ).
    \ee
    By the Helffer-Sj\"ostrand formula and the resolvent identity,
    $$
    \ph - \pho = - \frac{1}{\pi} \int_{\re^2} \frac{\partial
    \tilde{\chi}}{\partial \bar{z}}(z) (\hm - z)^{-1} v_0 (\hmo - z)^{-1}
    dx dy.
    $$
 Since the support of $\tilde{\chi}$ is
compact in $\re^2$, and the operator $\frac{\partial
    \tilde{\chi}}{\partial \bar{z}} (\hm - z)^{-1} v_0 (\hmo - z)^{-1}$ is compact for
every $(x,y) \in \re^2$ with $y \neq 0$, and is uniformly
norm-bounded for every $(x,y) \in \re^2$, we find that the
operator $\ph - \pho$ is compact. On the other hand, it is easy to
see that the operators
$$
\left( \gaden \right) \ph = \left( \gaden \right)
(\hm+1)^{-1}(\hm+1) \ph =
$$
$$
\left( \gaden \right) (\hmo+1)^{-1}(I - v_0(\hm+1)^{-1})(\hm+1)
\ph
$$
and
$$
\pho \left( \gaden \right) = \pho (\hmo + 1)(\hmo+1)^{-1}\left(
\gaden \right)
$$
are bounded. Taking into account \eqref{5a}, and bearing in mind
the compactness of the operator $\ph - \pho$, and the boundedness
of the operators $\left( \gaden \right) \ph$ and $\pho \left(
\gaden \right)$, we conclude that the operator $K_1$ is compact.
\\
Further, since $\delta < -\lambda/2$, and hence $2bj > 2bq +
\lambda + 2\delta$ for all $j \geq q$, we have
$$
\chi(\hopa + 2bj) = 0, \quad j \geq q.
$$
Therefore,
$$
\pho = \sum_{j=m_-}^{\infty} \tilde{p}_{j,m}  \otimes \chi(\hopa +
2bj) = \sum_{j=m_-}^{q-1} \tilde{p}_{j,m} \otimes \chi(\hopa +
2bj),
$$
and
    \bel{4}
    \pho  \left( \gaden \right) \pho = \sum_{j=m_-}^{q-1}
    \tilde{p}_{j,m} \otimes \left(\chi(\hopa +
    2bj)^2 \hopa\right).
    \ee
By the spectral theorem,
$$
\sum_{j=m_-}^{q-1} \tilde{p}_{j,m} \otimes \left(\chi(\hopa +
2bj)^2\hopa \right)   \geq \sum_{j=m_-}^{q-1}(2b(q-j) + \lambda -
2\delta) \tilde{p}_{j,m} \otimes \chi(\hopa + 2bj)^2 \geq
$$
    \bel{5}
    (2b + \lambda - 2\delta)\sum_{j=m_-}^{q-1} \tilde{p}_{j,m} \otimes \chi(\hopa +
    2bj)^2 = C_1 \chi(\hmo)^2
    \ee
with $C_1 : = 2b + \lambda - 2\delta > 0$. Combining \eqref{3},
\eqref{4}, and \eqref{5}, we get
    \bel{6}
    \ph [\hm, iA] \ph \geq 2C_1 \ph^2 + 2C_1 K_3 + 2K_1 - K_2
    \ee
where $K_3 : = \pho^2 - \ph^2$ is a compact operator by the
Helffer-Sj\"ostrand formula. Now we find that \eqref{6} is
equivalent to \eqref{2} with $C = 2C_1$ and $\tilde{K} = 2C_1 K_3
+ 2K_1 - K_2$.
\end{proof}
{\em Remark}: Mourre estimates for various magnetic quantum
Hamiltonians can be found in \cite[Chapter 3]{gl}.\\

 {\bf 4.2.} By analogy with \eqref{ser3} set
$$
V_j(\varrho, x_3) = x_3^j \frac{\partial^j V(\varrho,
x_3)}{\partial x_3^j}, \quad j \in {\mathbb Z}_+.
$$
We have
$$
i^k {\rm ad}_A^{(k)}(V) = \sum_{j=1}^k c_{k,j} V_j
$$
with the same constants $c_{k,j}$ as in \eqref{ser5}. \\
We will say that the condition ${\mathcal O}_{\nu}$, $\nu \in
{\mathbb Z}_+$, holds true if the multipliers by $v_j$, $j=0,1$,
are $\hopa$-compact,  and the multipliers by $v_j$, $j \leq \nu$,
are $\hopa$-bounded. Also, for a fixed $m \in {\mathbb Z}$ we will
say that the condition
 ${\mathcal C}_{\nu, m}$, $\nu \in {\mathbb Z}_+$, holds true if
the condition ${\mathcal O}_{\nu}$ is valid, the multiplier by $V$
is  $\hmo$-bounded with zero relative bound, and the multipliers
by $V_j$, $j = 1, \ldots,
\nu$, are $\hmo$-bounded.\\
By Proposition \ref{pr1} and \cite[Lemma 3.1]{cgh}, the validity
of condition ${\mathcal C}_{\nu, m}$ with $\nu \geq 5$ and a given
$m \in {\mathbb Z}$ guarantees the existence of a finite limit
$F_{q,m}(2bq + \lambda)$ with $q > m_-$ in \eqref{ser4}, provided
that \eqref{ser7}
holds true, and $\lambda$ is a discrete eigenvalue of $\hopa + v_0$.\\
Combining the results of Proposition \ref{pr1} and \cite[Theorem
1.2]{cgh}, we obtain the following
\begin{theorem} \label{th1}
Fix $m \in {\mathbb Z}$, $n \in {\mathbb Z}_+$. Assume that:
\begin{itemize}
\item the condition ${\mathcal C}_{\nu, m}$ holds with $\nu \geq
n+5$; \item inequality \eqref{ser7} holds true, and $\lambda$ is a
discrete eigenvalue of $\hpa$; \item inequality \eqref{ser4a}
holds true, and hence the Fermi Golden Rule ${\mathcal
F}_{q,m,\lambda}$ is valid.
\end{itemize}
Then there exists a
function $g \in C_0^{\infty}(\re; \re)$ such that ${\rm supp}\,g =
\overline{J}$ (see \eqref{ser8}), $g=1$ near $2bq + \lambda$, and
    \bel{x9}
\langle e^{-i(\hm + \ep V)t} g(\hm + \varkappa V) \Phi_{q,m},
\Phi_{q,m} \rangle = a(\varkappa) e^{-i\lambda_{q,m}(\varkappa)t}
+ b(\varkappa, t), \quad t \geq 0,
    \ee
where
    \bel{x10}
\lambda_{q,m}(\varkappa) = 2bq +\lambda + \varkappa \langle V
\Phi_{q,m}, \Phi_{q,m}\rangle - \varkappa^2 F_{q,m}(2bq + \lambda)
+ o_{q,m,V}(\varkappa^2), \quad \varkappa \to 0.
    \ee
In particular, we have ${\rm Im}\,\lambda_{q,m}(\varkappa) < 0$ for
$|\varkappa|$ small enough. Moreover, $a$ and $b$ satisfy the
asymptotic estimates
$$
|a(\varkappa) - 1| = O(\varkappa^2),
$$
$$
|b(\varkappa, t)| = O(\varkappa^2 |\ln{|\varkappa|}| (1+t)^{-n}),
$$
$$
|b(\varkappa, t)| = O(\varkappa^2 (1+t)^{-n+1}),
$$
as $\varkappa \to 0$ uniformly with respect to $t \geq 0$.
\end{theorem}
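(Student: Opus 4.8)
The plan is to derive Theorem \ref{th1} as an application of the abstract result \cite[Theorem 1.2]{cgh} to the self-adjoint operator $\hm$, the perturbation $\varkappa V$, the conjugate operator $A=\tilde{I}_{\perp}\otimes{\mathcal A}$, and the simple embedded eigenvalue $2bq+\lambda$ of $\hm$ (with $q>m_-$ as in Proposition \ref{pr1}) whose rank-one eigenprojection is ${\mathcal P}_{q,m}=|\Phi_{q,m}\rangle\langle\Phi_{q,m}|$, see \eqref{vin50}--\eqref{vin55}. The whole argument reduces to checking that the hypotheses of that theorem are met; the conclusion \eqref{x9}, the expansion \eqref{x10}, and the bounds on $a$ and $b$ are then read off directly from its statement.

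First I would check the regularity of $\hm$ and of $\varkappa V$ relative to $A$. By \eqref{ser5} we have $i^{k}{\rm ad}_A^{(k)}(\hm)=2^{k}\,\gaden+\sum_{j=1}^{k}c_{k,j}v_j$; under condition ${\mathcal C}_{\nu,m}$ the multipliers by $v_j$, $j\le\nu$, are $\hopa$-bounded, hence $\hm$-bounded, and $\gaden$ is manifestly $\hm$-bounded, so ${\rm ad}_A^{(j)}(\hm)$ is $\hm$-bounded for all $j\le\nu$, i.e. $\hm\in C^{\nu}(A)$. In the same way $i^{k}{\rm ad}_A^{(k)}(V)=\sum_{j=1}^{k}c_{k,j}V_j$, and ${\mathcal C}_{\nu,m}$ makes the multipliers by $V_j$, $1\le j\le\nu$, $\hmo$-bounded and hence $\hm$-bounded, while $V$ itself is $\hmo$-bounded with zero relative bound; thus $\varkappa V\in C^{\nu}(A)$ and $\hm+\varkappa V$ is self-adjoint on ${\rm Dom}(\hmo)$ for $|\varkappa|$ small. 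The choice $\nu\ge n+5$ then supplies exactly the amount of smoothness required in \cite{cgh} to reach the $(1+t)^{-n}$ time decay: a fixed number of commutators for the existence of the limit $F_{q,m}(2bq+\lambda)$ and of the resonance, plus $n$ further commutators for the decay exponent.

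Next I would invoke the Mourre estimate. The hypotheses of Proposition \ref{pr1} (compactness of $v_j(\hopa+1)^{-1}$, $j=0,1$) are contained in ${\mathcal O}_\nu\subset{\mathcal C}_{\nu,m}$; since $\lambda\in(-2b,0)$ by \eqref{ser7} we may take $\delta>0$ as in \eqref{ser8} and, should $\hpa$ possess more than one discrete eigenvalue, small enough that $2bq+\lambda$ is the only eigenvalue of $\hm$ in $\overline{J}$, so that \eqref{1} holds on $J$, and (applying Proposition \ref{pr1} with $\delta$ slightly enlarged if need be) on an open interval strictly containing $\overline{J}$. With $2bq+\lambda$ a simple embedded eigenvalue, \eqref{1} is precisely the Mourre-type hypothesis of \cite{cgh}, and by \cite[Lemma 3.1]{cgh} it already entails the limiting absorption principle for $\hm$ away from $2bq+\lambda$ and, in particular, the existence and finiteness of the limit $F_{q,m}(2bq+\lambda)$ in \eqref{ser4}. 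The last hypothesis, \eqref{ser4a}, is exactly the positivity ${\rm Im}\,F_{q,m}(2bq+\lambda)>0$ of the Fermi-Golden-Rule (level-shift) operator demanded by \cite[Theorem 1.2]{cgh}.

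All hypotheses being in place, \cite[Theorem 1.2]{cgh} produces $g\in C_0^\infty(\re;\re)$ supported in $\overline{J}$ with $g=1$ near $2bq+\lambda$, a resonance $\lambda_{q,m}(\varkappa)$, and the identity \eqref{x9} with the two bounds on $b$ and the bound on $a$ recorded in the theorem. It then remains to identify the low-order part of $\lambda_{q,m}(\varkappa)$: the first-order coefficient of the \cite{cgh} resonance equals $\langle V\Phi_{q,m},\Phi_{q,m}\rangle$, while its second-order coefficient coincides, by the very definition \eqref{ser4} of $F_{q,m}(2bq+\lambda)$, with $-F_{q,m}(2bq+\lambda)$; this yields \eqref{x10} with an $o(\varkappa^2)$ error, necessarily weaker than the $O(\varkappa^3)$ of Theorem \ref{thm1} since no analyticity is assumed, and in particular ${\rm Im}\,\lambda_{q,m}(\varkappa)=-\varkappa^2\,{\rm Im}\,F_{q,m}(2bq+\lambda)\,(1+o(1))<0$ for $|\varkappa|$ small. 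I expect the only delicate point to be bookkeeping: matching the count $\nu\ge n+5$ to the regularity order actually used in \cite{cgh}, and verifying that Proposition \ref{pr1}, with the simplicity of the embedded eigenvalue, reproduces verbatim the Mourre hypothesis of that paper; no genuinely new analytic difficulty appears.
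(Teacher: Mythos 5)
Your proposal follows exactly the route the paper takes: Theorem \ref{th1} is stated there as a direct corollary of the Mourre estimate of Proposition \ref{pr1} together with \cite[Theorem 1.2]{cgh} (and \cite[Lemma 3.1]{cgh} for the existence of the limit $F_{q,m}(2bq+\lambda)$), which is precisely your plan. Your write-up merely makes explicit the hypothesis-checking (regularity of $\hm$ and $V$ with respect to $A$ via \eqref{ser5}, the Mourre estimate on $J$, and the Fermi Golden Rule condition \eqref{ser4a}) that the paper leaves implicit, so it is correct and essentially identical in approach.
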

We will say that the condition
 ${\mathcal C}_{\nu}$, $\nu \in {\mathbb Z}_+$, holds true if
the condition ${\mathcal O}_{\nu}$ is valid, the multiplier by $V$
is  $H_0$-bounded with zero relative bound, and the multipliers by
$V_j$, $j = 1, \ldots,
\nu$, are $H_0$-bounded.\\
For $m \in {\mathbb Z}$ and $q \geq m_-$ denote by
$\tilde{\Phi}_{q,m}: \rt \to {\mathbb C}$ the function written in
cylindrical coordinates $(\varrho, \phi, x_3)$ as
$\tilde{\Phi}_{q,m}(\varrho, \phi, x_3) =(2\pi)^{-\frac12}
e^{im\phi}\Phi_{q,m}(\varrho, x_3)$.

\begin{follow} \label{f31}
Fix  $n \in {\mathbb Z}_+$. Assume that:
\begin{itemize}
\item
 the condition ${\mathcal
C}_{\nu}$ holds with $\nu \geq n+5$; \item inequality \eqref{ser7}
is fulfilled, and $\lambda$ is a discrete eigenvalue of $\hopa +
v_0$; \item for each $m \in {\mathbb Z}$, $q > m_-$, inequality
\eqref{ser4a} holds true, and hence the Fermi Golden Rule
${\mathcal F}_{q,m,\lambda}$ is valid.
\end{itemize}
 Then   for every fixed $q \in {\mathbb Z}_+$, and each $m \in
\{-q+1, \ldots, 0\} \cup {\mathbb N}$ with ${\mathbb N} : =
\{1,2,\ldots\}$, we have
    $$
\langle e^{-i(H + \varkappa V)t} g(H + \varkappa V)
\tilde{\Phi}_{q,m}, \tilde{\Phi}_{q,m} \rangle_{L^2(\rt)} =
a(\varkappa) e^{-i \lambda_{q,m}(\ep) t} + b(\varkappa, t), \quad
t \geq 0,
    $$
    where $g$, $\lambda_{q,m}(\ep)$,  $a$, and $b$ are the same as
    in Theorem \ref{th1}.
\end{follow}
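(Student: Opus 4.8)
The plan is to derive the corollary from Theorem \ref{th1} via the fibre decomposition of $H + \varkappa V$. Recall (Subsection 2.3) that $H$ is unitarily equivalent to $\bigoplus_{m\in\Z}\hm$; since $V$ is axisymmetric (Subsection 2.4), the multiplier by $V$ commutes with $L$, and the same reasoning -- passing to cylindrical coordinates and expanding in a Fourier series with respect to $\phi$, as for $H_0$ in Subsection 2.1 -- shows that $H + \varkappa V$ is unitarily equivalent to $\bigoplus_{m\in\Z}(\hm + \varkappa V)$, the restriction of $H + \varkappa V$ onto ${\rm Ker}(L - m)$ being unitarily equivalent to $\hm + \varkappa V$ acting on $L^2(\re_+\times\re;\varrho d\varrho dx_3)$. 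Under this unitary equivalence the normalization constant $(2\pi)^{-1/2}$ in the definition of $\tilde{\Phi}_{q,m}$ is exactly what sends $\tilde{\Phi}_{q,m}$ to the vector of $\bigoplus_m L^2(\re_+\times\re;\varrho d\varrho dx_3)$ whose $m$-th component equals $\Phi_{q,m}$ and whose remaining components vanish; in particular $\|\tilde{\Phi}_{q,m}\|_{L^2(\rt)} = \|\Phi_{q,m}\| = 1$.

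Because the bounded Borel functional calculus and the unitary group generated by a self-adjoint operator both respect orthogonal sums, $g(H + \varkappa V)$ and $e^{-i(H + \varkappa V)t}$ correspond, under the same unitary equivalence, to $\bigoplus_m g(\hm + \varkappa V)$ and $\bigoplus_m e^{-i(\hm + \varkappa V)t}$. Consequently, for every $m$,
$$
\langle e^{-i(H + \varkappa V)t} g(H + \varkappa V)\tilde{\Phi}_{q,m}, \tilde{\Phi}_{q,m}\rangle_{L^2(\rt)} = \langle e^{-i(\hm + \varkappa V)t} g(\hm + \varkappa V)\Phi_{q,m}, \Phi_{q,m}\rangle,
$$
the scalar product on the right-hand side being taken in $L^2(\re_+\times\re;\varrho d\varrho dx_3)$. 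It therefore remains to apply Theorem \ref{th1} to $\hm + \varkappa V$ for each admissible $m$, for which I only have to check that the hypotheses of the corollary entail, for each such $m$, the hypotheses of Theorem \ref{th1}.

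This verification is routine. For $q \ge 1$ the index set $\{-q+1,\dots,0\}\cup\N$ coincides with $\{m\in\Z : q > m_-\}$, so for each such $m$ the number $2bq+\lambda$ is a simple embedded eigenvalue of $\hm$ with eigenfunction $\Phi_{q,m}$, and Theorem \ref{th1} is applicable in principle. The condition ${\mathcal O}_\nu$ involves only $v_0$ and is thus common to ${\mathcal C}_\nu$ and ${\mathcal C}_{\nu,m}$. That ${\mathcal C}_\nu$ implies ${\mathcal C}_{\nu,m}$ for every $m$ follows from the decomposition $H_0 \cong \bigoplus_m \hmo$: if an axisymmetric multiplier $W$ -- take $W = V$, respectively $W = V_j$ -- satisfies $\|Wu\| \le \epsilon\|H_0 u\| + C_\epsilon\|u\|$ on ${\rm Dom}(H_0)$, then restricting this estimate to vectors supported in the $m$-th summand yields $\|W u_m\| \le \epsilon\|\hmo u_m\| + C_\epsilon\|u_m\|$ on ${\rm Dom}(\hmo)$; hence a zero relative bound with respect to $H_0$ forces a zero relative bound with respect to $\hmo$, so that $V$ is $\hmo$-bounded with zero relative bound and each $V_j$, $1 \le j \le \nu$, is $\hmo$-bounded. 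Finally, inequality \eqref{ser7} and the discreteness of $\lambda$ are assumed outright, while inequality \eqref{ser4a} -- equivalently, the validity of ${\mathcal F}_{q,m,\lambda}$ -- is assumed for every $m$ with $q > m_-$ by the third hypothesis of the corollary. Thus Theorem \ref{th1} applies to $\hm + \varkappa V$ for every admissible $m$ and produces the representation \eqref{x9}, with $\lambda_{q,m}(\varkappa)$ given by \eqref{x10} and with the stated error bounds uniform in $t \ge 0$; since the interval $J$ of \eqref{ser8}, and hence the cutoff $g$, depends only on $q$, $\lambda$ and $b$, the same $g$ serves for all admissible $m$, while $\lambda_{q,m}(\varkappa)$, $a$ and $b$ are those furnished by Theorem \ref{th1} for the corresponding fibre. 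Combining this with the displayed matrix-element identity gives the claim. I do not expect any genuine obstacle here; the only mildly delicate points are the transfer of relative bounds from $H_0$ to its fibres $\hmo$ and the observation that the cutoff $g$ of Theorem \ref{th1} can be chosen independently of $m$.
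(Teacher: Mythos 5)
Your argument is correct and is exactly the (implicit) argument the paper intends: the corollary is stated without proof immediately after Theorem \ref{th1}, the decomposition of $H+\varkappa V$ into the fibres $\hm+\varkappa V$ and the identification of $\tilde{\Phi}_{q,m}$ with $\Phi_{q,m}$ having already been set up in Sections 2--3. Your extra care with the transfer of the relative bounds from $H_0$ to $\hmo$ and with the $m$-independence of the cutoff $g$ fills in details the paper leaves unstated, and both checks are right.
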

{\em Remarks}: (i) If $q \geq 1$, then Corollary \ref{f31} tells
us that typically the eigenvalue $2bq + \lambda$ of the operator
$H$, which has an infinite multiplicity, generates under the
perturbation $\varkappa V$ infinitely many resonances with
non-zero imaginary part. Note however that $2bq+\lambda$ is a
discrete simple eigenvalue of the operator $H^{(-q)}$, and
therefore the operator $H^{(-q)} + \varkappa V$ has a simple
discrete eigenvalue provided that $|\varkappa|$ is small enough.
Generically, this eigenvalue is an embedded eigenvalue for the
operator $H + \varkappa V$.\\
(ii) If $q=0$, then $\lambda$ is an isolated eigenvalue of
infinite multiplicity for $H$. By Theorem \ref{t31} below, in this
case there exists an infinite series of discrete eigenvalues of
the operator $H + V$ which accumulate at $\lambda$, provided that
the perturbation $V$ has a definite sign.

\section{Sufficient conditions for the validity of the Fermi
Golden Rule}
    \setcounter{equation}{0}
In this section we describe certain classes of perturbations
 $V$ compatible with the hypotheses of Theorems \ref{thm1} - \ref{th1}, for which
 the Fermi Golden rule
 ${\mathcal F}_{q,m,\lambda}$ is valid for
every $m \in {\mathbb Z}$ and $q > m_-$. The results included are
of two different types. Those of Subsection 5.1 are  less general
but they offer a constructive approximation of $V$ by potentials
for which the Fermi Golden Rule holds. On the other hand, the
results of Subsection 5.2 are more general,
but they are more abstract and less constructive.\\

{\bf 5.1.} Assume that $v_0 \in C^{\infty}(\re)$ satisfies the
estimates
    \bel{ppp100}
    |v_0^{(j)}(x)| = O_j \left(\langle x \rangle^{-m_0 - j}\right),
    \quad x \in \re, \quad j \in {\mathbb Z}_+, \quad m_0 > 1.
    \ee
Then condition ${\mathcal O}_{\nu}$ is valid for every $\nu \in
{\mathbb Z}_+$.  Moreover, in this case the eigenfunction $\psi$
(see \eqref{ser2}) is in the Schwartz class ${\mathcal S}(\re)$,
while the Jost solutions $y_j(\cdot;k)$, $j=1,2$, belong to
$C^{\infty}(\re) \cap
L^{\infty}(\re)$.\\
Suppose that \eqref{ser7} holds true, and the discrete spectrum of
the operator $\hpa$ consists of a unique eigenvalue $\lambda$. Fix
$m \in {\mathbb Z}$, and $q \in {\mathbb Z}_+$ such that $q
> m_-$. Then it is easy to check that we have
$$
{\rm Im}\,F_{q,m}(2bq + \lambda) =
$$
    \bel{fgr2}
\pi \sum_{l=1,2} \sum_{j=m_-}^{q-1}\left|\int_0^{\infty}
\int_{\re} \varphi_{j,m}(\varrho) \varphi_{q,m}(\varrho) \psi(x_3)
\Psi_l(x_3; 2b(q-j) + \lambda) V(\varrho, x_3) dx_3 \varrho
d\varrho\right|^2.
    \ee
In what follows we denote by $\lre$ the set of real functions $W
\in L^2(\re_+; \varrho d\varrho)$.
    \begin{lemma} \label{l31}
The set of functions $W \in \lre$ for which
    \bel{pp1}
\int_0^{\infty}  \varphi_{q-1,m}(\varrho) \varphi_{q,m}(\varrho)
 W(\varrho) \varrho d\varrho \neq 0
    \ee
 for every $m \in {\mathbb Z}$, $q > m_-$, is dense in $\lre$.
    \end{lemma}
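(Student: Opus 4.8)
\textbf{Proof proposal for Lemma \ref{l31}.}

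The plan is to argue by contradiction using a density-type argument in the Hilbert space $\lre$. First I would fix $m \in \mathbb{Z}$ and $q > m_-$ and observe that the functional
$$
W \longmapsto \int_0^{\infty} \varphi_{q-1,m}(\varrho)\, \varphi_{q,m}(\varrho)\, W(\varrho)\, \varrho\, d\varrho
$$
is a bounded linear functional on $\lre$, represented by the vector $\varphi_{q-1,m}\varphi_{q,m}$. The set $\mathcal{Z}_{q,m}$ of $W$ for which this integral vanishes is thus a closed subspace, namely the orthogonal complement of $\varphi_{q-1,m}\varphi_{q,m}$ in $\lre$. The key point is that this vector is \emph{not} the zero element of $\lre$: both $\varphi_{q-1,m}$ and $\varphi_{q,m}$ are, up to nonzero constants, of the form $\varrho^m L_j^{(m)}(b\varrho^2/2) e^{-b\varrho^2/4}$ with $L_j^{(m)}$ a polynomial that is not identically zero, so their product is a real-analytic function that vanishes only on a discrete set of $\varrho$'s and is therefore nonzero in $L^2(\re_+;\varrho d\varrho)$. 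Consequently each $\mathcal{Z}_{q,m}$ is a \emph{proper} closed subspace of $\lre$, hence has empty interior and is nowhere dense.

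Next I would take the union over the countable index set $\{(q,m) : m \in \mathbb{Z},\ q > m_-\}$. The set of ``bad'' functions — those for which \eqref{pp1} fails for at least one pair $(q,m)$ — is exactly $\bigcup_{q,m} \mathcal{Z}_{q,m}$, a countable union of nowhere dense sets, hence meager in $\lre$. By the Baire category theorem its complement, which is precisely the set of $W$ satisfying \eqref{pp1} simultaneously for all $m$ and all $q > m_-$, is a dense $G_\delta$ in $\lre$; in particular it is dense. This proves the lemma.

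The only genuine point requiring care is the non-vanishing claim for $\varphi_{q-1,m}\varphi_{q,m}$ as an element of $\lre$; everything else is soft functional analysis. Here one uses the explicit formula \eqref{ser1}: the prefactors $\sqrt{\tfrac{j!}{\pi(j+m)!}(b/2)^{m+1}}$ are strictly positive, the Gaussian factor $e^{-b\varrho^2/4}$ never vanishes, and the product $\varrho^{2m} L_{q-1}^{(m)}(b\varrho^2/2)L_q^{(m)}(b\varrho^2/2)$ is a nonzero polynomial in $\varrho^2$ times $\varrho^{2m}$, so it has only finitely many zeros on $\re_+$. Therefore $\int_0^\infty |\varphi_{q-1,m}\varphi_{q,m}|^2 \varrho\, d\varrho > 0$, which is what is needed. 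An even more elementary route that avoids Baire: one can exhibit a single explicit $W$ (e.g. $W(\varrho) = e^{-b\varrho^2/4}$ times a suitable polynomial, or simply note that the $\varphi_{q-1,m}\varphi_{q,m}$ form a countable family and pick $W$ in the complement of each hyperplane by a standard countable-codimension argument) and then perturb, but the Baire-category formulation is the cleanest for a density statement.
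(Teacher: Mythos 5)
Your argument is correct, but it takes a genuinely different route from the paper. You treat each condition \eqref{pp1} as the non-vanishing of a bounded linear functional on the real Hilbert space $\lre$, note that its kernel ${\mathcal Z}_{q,m}$ is a proper closed subspace (properness resting on the fact that $\varphi_{q-1,m}\varphi_{q,m}$, a nonzero polynomial in $\varrho^2$ times $\varrho^{2m}$ times a Gaussian, has only finitely many zeros and hence is a nonzero element of $L^2(\re_+;\varrho\,d\varrho)$), and conclude by Baire that the complement of the countable union $\bigcup_{q,m}{\mathcal Z}_{q,m}$ is a dense $G_\delta$. The paper instead argues constructively: it uses completeness of the Laguerre system to approximate $W$ in $\lre$ by ${\mathcal P}(b\varrho^2/2)e^{-b\varrho^2/4}$ with ${\mathcal P}$ a real polynomial, then perturbs the Gaussian exponent inside the explicit family ${\mathcal W}_\alpha(\varrho)={\mathcal P}(b\varrho^2/2)e^{-\alpha b\varrho^2/2}$; for each $(q,m)$ the integral in \eqref{pp1} is a nonzero polynomial in $(1+\alpha)^{-1}$, so the bad values of $\alpha$ form a countable set and one picks $\alpha_0$ near $1/2$ outside it. Both proofs are sound and both ultimately hinge on the same countability observation, but they buy different things: yours is shorter, purely soft, and yields that the good set is residual; the paper's yields an explicit approximating potential, which is precisely the feature the authors advertise for Subsection 5.1 (as opposed to Subsection 5.2, where they themselves invoke the Baire lemma in the more abstract Theorem 5.3). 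The only point you must not gloss over — and you correctly flag it — is the non-vanishing of $\varphi_{q-1,m}\varphi_{q,m}$ in $\lre$, together with the (easy) fact that this product is square-integrable against $\varrho\,d\varrho$ so that the functional is indeed bounded.
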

    \begin{proof}
 Since the Laguerre polynomials $L_q^{(0)}$, $q \in {\mathbb
 Z}_+$, (see \eqref{x3}) form an orthogonal basis in $L^{2}(\re_+; e^{-s}ds)$,
 the set of polynomials is dense in $L^{2}(\re_+; e^{-s}ds)$. Pick $W \in \lre$. Set $w(s)
 : = W(\sqrt{2s/b})e^{s/2}$, $s>0$. Evidently, $w = \overline{w} \in
L^{2}(\re_+;
 e^{-s}ds)$. Pick $\varepsilon > 0$ and find a non-zero polynomial
 ${\mathcal P}$ with real coefficients such that
 $$
 \int_0^{\infty} e^{-s} \left({\mathcal P}(s) - w(s)\right)^2 ds < \frac{b
 \varepsilon^2}{4}.
 $$
 Note that the coefficients of ${\mathcal P}$ could be chosen real
 since the coefficients of the Laguerre polynomials are real.
 Changing the variable $s = b\varrho^2/2$, we get
    \bel{x1}
    \int_0^{\infty} \left({\mathcal P}(b\varrho^2/2)
    e^{-b\varrho^2/4} - W(\varrho)\right)^2 \varrho d\varrho <
    \frac{\varepsilon^2}{4}.
    \ee
    Now set ${\mathcal W}_{\alpha}(\varrho) : = {\mathcal P}(b\varrho^2/2)
    e^{-\alpha b\varrho^2/2}$, $\varrho \in \re_+$, $\alpha \in (0,\infty)$, where the
    real polynomial ${\mathcal P}$ is fixed and satisfies
    \eqref{x1}. We will show that the set
    \bel{pp2}
{\mathbb A} : = \left\{\alpha \in (0,\infty) \, | \,
\int_0^{\infty} \varphi_{q-1,m}(\varrho) \varphi_{q,m}(\varrho)
 {\mathcal W}_{\alpha}(\varrho) \varrho d\varrho \neq 0, \;
 \forall m \in {\mathbb Z}, \; \forall q > m_- \right\}
    \ee
 is dense in $(0,\infty)$. Actually, for fixed $m \in {\mathbb Z}$ and
 $q > m_-$, we have
 $$
\int_0^{\infty}  \varphi_{q-1,m}(\varrho) \varphi_{q,m}(\varrho)
 {\mathcal W}_{\alpha}(\varrho) \varrho d\varrho =
 \Pi_{q,m}(\gamma(\alpha))
 $$
 where $\Pi_{q,m}$ is a real polynomial of degree $2q + m + 1 +
 {\rm deg}\, {\mathcal P}$, and $\gamma(\alpha) : =
 (1+\alpha)^{-1}$. Note that $\gamma : (0,\infty) \to (0,1)$ is a
 bijection. Denote by ${\mathcal N}_{q,m}$ the set of the zeros of
 $\Pi_{q,m}$ lying on the interval $(0,1)$. Set
 $$
 {\mathcal N} : = \bigcup_{m \in {\mathbb Z}} \bigcup_{q =
 m_- + 1}^{\infty} {\mathcal N}_{q,m}.
 $$
 Evidently, the sets ${\mathcal N}$ and ${\gamma}^{-1}({\mathcal
 N})$ are countable, and ${\mathbb A} = (0,\infty)\setminus {\gamma}^{-1}({\mathcal
 N})$. Therefore, ${\mathbb A}$ is dense in $(0,\infty)$.
 Now, pick $\alpha_0 \in {\mathbb A}$ so close to $1/2$ that
    \bel{x5}
\int_0^{\infty} {\mathcal
P}(b\varrho^2/2)^2\left(e^{-b\varrho^2/4} - e^{-\alpha_0
b\varrho^2/2}\right)^2 \varrho d\varrho < \frac{\varepsilon^2}{4}.
    \ee
    Assembling \eqref{x1} and \eqref{x5}, we obtain
    \bel{pp3}
    \|W - {\mathcal W}_{\alpha_0}\|_{L^2(\re_+;\varrho d\varrho)} < \varepsilon.
    \ee
\end{proof}
Denote by $K(\re)$ the class of real-valued continuous functions
$u : [0,\infty) \to \re$ such that $\lim_{s \to \infty} u(s) = 0$.
Set $\|u\|_{K(\re)} : = \max_{s \in [0,\infty)} |u(s)|$.
\begin{lemma} \label{l31a}
The set of functions $W \in K(\re)$ for which
    \eqref{pp1} holds true
for every $m \in {\mathbb Z}$, $q > m_-$, is dense in $K(\re)$.
    \end{lemma}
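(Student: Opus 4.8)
The plan is to follow the proof of Lemma \ref{l31} almost verbatim, replacing only the two ingredients that are special to $L^2$. Recall that there one picks $W$, approximates it in a weighted $L^2$ space by a real polynomial ${\mathcal P}$ in $b\varrho^2/2$ times the Gaussian $e^{-b\varrho^2/4}$, fixes this ${\mathcal P}$ and sets ${\mathcal W}_\alpha(\varrho):={\mathcal P}(b\varrho^2/2)e^{-\alpha b\varrho^2/2}$, observes that $\int_0^\infty\varphi_{q-1,m}\varphi_{q,m}{\mathcal W}_\alpha\,\varrho\,d\varrho=\Pi_{q,m}(\gamma(\alpha))$ with $\Pi_{q,m}$ a real polynomial and $\gamma(\alpha)=(1+\alpha)^{-1}$, concludes that the bad set of $\alpha$ is countable, and finally chooses $\alpha_0$ close to $1/2$. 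Two points change for $K(\re)$: (i) polynomials do not even belong to $K(\re)$, so the opening ``Weierstrass'' step must be replaced by a density statement adapted to the sup-norm; and (ii) the closeness of ${\mathcal W}_{\alpha_0}$ to the target has to be measured in $\|\cdot\|_{K(\re)}$.

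\textbf{The density ingredient (the main point).} The replacement for (i) is: the linear span of $\{\,{\mathcal P}(b\varrho^2/2)\,e^{-b\varrho^2/4}\ :\ {\mathcal P}\ \text{a real polynomial}\,\}$ is dense in $K(\re)$. I would prove this by duality. A bounded linear functional on $K(\re)=C_0([0,\infty))$ is represented by a finite signed Borel measure $\mu$ on $[0,\infty)$; if it annihilates the span above, then pushing $\mu$ forward under the homeomorphism $\varrho\mapsto s=b\varrho^2/2$ and setting $d\nu(s):=e^{-s/2}\,d\tilde\mu(s)$ (where $\tilde\mu$ is the pushforward), we obtain a finite signed measure $\nu$ on $[0,\infty)$ with $\int_0^\infty s^j\,d\nu(s)=0$ for every $j\in{\mathbb Z}_+$. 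Moreover $\int_0^\infty e^{s/2}\,d|\nu|(s)$ equals the total variation of $\mu$, hence is finite, so the Laplace transform $z\mapsto\int_0^\infty e^{zs}\,d\nu(s)$ is holomorphic in $\{\operatorname{Re}z<1/2\}$, extends continuously up to $\{\operatorname{Re}z\le 1/2\}$, and has all derivatives at $z=0$ equal to $\int s^j\,d\nu=0$; by the identity theorem it vanishes identically, so the Fourier transform $\widehat\nu$ is $0$, whence $\nu=0$ and $\mu=0$. This establishes the density.

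\textbf{Conclusion.} Given $W\in K(\re)$ and $\varepsilon>0$, use the density ingredient to pick a \emph{nonzero} real polynomial ${\mathcal P}$ with $\|W-{\mathcal P}(b\varrho^2/2)e^{-b\varrho^2/4}\|_{K(\re)}<\varepsilon/2$ (if the approximant returned is $0$, then $\|W\|_{K(\re)}<\varepsilon/2$ and one takes instead a small nonzero constant times $e^{-b\varrho^2/4}$). With this ${\mathcal P}$ fixed, run the argument of Lemma \ref{l31}: since ${\mathcal P}\not\equiv 0$ and $\varrho^{2m}L_{q-1}^{(m)}L_q^{(m)}$ is a fixed nonzero polynomial, each $\Pi_{q,m}$ is not identically zero, so the set $\mathbb A$ of $\alpha>0$ for which ${\mathcal W}_\alpha$ satisfies \eqref{pp1} for all $m\in\Z$ and $q>m_-$ is the complement of a countable set, hence dense in $(0,\infty)$. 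Point (ii) is handled by the elementary estimate $\sup_{s\ge0}|{\mathcal P}(s)(e^{-\alpha s}-e^{-s/2})|\le|\alpha-1/2|\,\sup_{s\ge0}|{\mathcal P}(s)|\,s\,e^{-s/4}$ for $\alpha\in[1/4,3/4]$ (mean value theorem applied to the exponent), which gives $\|{\mathcal W}_\alpha-{\mathcal P}(b\varrho^2/2)e^{-b\varrho^2/4}\|_{K(\re)}\to 0$ as $\alpha\to 1/2$; picking $\alpha_0\in\mathbb A$ close enough to $1/2$ yields $\|W-{\mathcal W}_{\alpha_0}\|_{K(\re)}<\varepsilon$ with ${\mathcal W}_{\alpha_0}\in K(\re)$ satisfying \eqref{pp1}. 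I expect the density statement of the second paragraph to be the only real obstacle; the remainder is a routine transcription of Lemma \ref{l31}. (One may also bypass the construction altogether: for each pair $(q,m)$ with $q>m_-$ the functional $W\mapsto\int_0^\infty\varphi_{q-1,m}\varphi_{q,m}W\,\varrho\,d\varrho$ is bounded on the Banach space $K(\re)$ and nonzero—evaluate it at $W=\varphi_{q-1,m}\varphi_{q,m}\in K(\re)$—so its kernel is a closed nowhere dense subspace, and the set in the lemma, being the intersection over the countably many such pairs of the complements of these kernels, is dense by the Baire category theorem.)
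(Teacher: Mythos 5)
Your proof is correct, and its main line is the same as the paper's: fix a polynomial $\mathcal P$, set ${\mathcal W}_{\alpha}(\varrho)={\mathcal P}(b\varrho^2/2)e^{-\alpha b\varrho^2/2}$, and move $\alpha$ off the countable bad set ${\mathbb A}^c$ coming from the zeros of the polynomials $\Pi_{q,m}\circ\gamma$. The difference is in the density ingredient: the paper simply invokes the Stone--Weierstrass theorem for locally compact spaces to get density in $K(\re)$ of the functions $e^{-\alpha s}{\mathcal P}(s)$ (strictly speaking this gives density of the \emph{linear span} of such products, a point the paper glosses over), whereas you prove, by Riesz representation plus a moment/Laplace-transform uniqueness argument, the cleaner fixed-weight statement that $\{{\mathcal P}(s)e^{-s/2}\}$ is already dense; this is a sound and in fact more careful substitute. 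You also make explicit the continuity estimate $\|{\mathcal W}_{\alpha}-{\mathcal W}_{1/2}\|_{K(\re)}=O(|\alpha-1/2|)$, which the paper leaves implicit when it picks $\alpha_0\in{\mathbb A}$ close to the Stone--Weierstrass exponent. Finally, your parenthetical Baire-category argument --- each functional $W\mapsto\int_0^\infty\varphi_{q-1,m}\varphi_{q,m}W\varrho\,d\varrho$ is bounded and nonzero on $K(\re)$, so the set in question is a countable intersection of dense open sets --- is a genuinely different and shorter route; it is exactly the mechanism the paper itself deploys later in the proof of Theorem \ref{t32a}, and it sacrifices only the constructive approximant ${\mathcal W}_{\alpha_0}$ that the explicit method provides.
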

    \begin{proof}
By the Stone-Weierstrass theorem for locally compact spaces, we
find that the set of functions $e^{-\alpha s} {\mathcal P}(s)$,
$s>0$ where $\alpha \in (0,\infty)$, and ${\mathcal P}$ is a
polynomial,
is dense in $K(\re)$.\\
Let $W \in K(\re)$; then we have $u \in K(\re)$ where $u(s) : =
W(\sqrt{2s/b})$, $s>0$. Pick $\varepsilon > 0$ and find $\alpha
\in (0,\infty)$ and a polynomial ${\mathcal P}$ such that $\|W -
{\mathcal W}_{\alpha}\|_{K(\re)} < \varepsilon/2$ where, as in the
proof of Lemma \ref{l31}, ${\mathcal W}_{\alpha}(\varrho) =
e^{-\alpha b \varrho^2/2} {\mathcal P}( b \varrho^2/2)$. Next pick
$\alpha_0 \in {\mathbb A}$ (see \eqref{pp2}) such that
$\|{\mathcal W}_{\alpha} - {\mathcal W}_{\alpha_0}\|_{K(\re)} \leq
\varepsilon/2$. Therefore, similarly to  \eqref{pp3} we have $\|W
- {\mathcal W}_{\alpha_0}\|_{K(\re)} \leq \varepsilon$.
 \end{proof}
Fix $\nu \in {\mathbb Z}_+$. We will write $V \in {\mathcal
D}_{\nu}$ if
$$
\|V\|_{{\mathcal D}_{\nu}}^2 : = \sum_{j=0}^{\nu} \int_0^{\infty}
\int_{\re} \left(x_3^j \frac{\partial^j V(\varrho, x_3)}{\partial
x_3^j}\right)^2 dx_3 \varrho d\varrho < \infty.
$$
Note that if ${\mathcal O}_{\nu}$ holds, and $V \in {\mathcal
D}_{\nu}$, then ${\mathcal C}_{\nu}$ is valid.
    \begin{theorem} \label{t32}
Assume that:
    \begin{itemize}
    \item $v_0 \in C^{\infty}(\re)$ satisfies \eqref{ppp100};
    \item inequality \eqref{ser7} holds true;
    \item we have $\sigma_{\rm disc}(\hpa) = \{\lambda\}$.
    \end{itemize}
  Fix $\nu \in
{\mathbb Z}_+$. Then the set of real perturbations $V : \re_+
\times \re \to \re$ for which the Fermi Golden Rule ${\mathcal
F}_{q,m,\lambda}$ is valid for each $m \in {\mathbb Z}$ and $q >
m_-$, is dense in ${\mathcal D}_{\nu}$.
    \end{theorem}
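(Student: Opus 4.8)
The plan is to exploit the explicit formula \eqref{fgr2} for $\mathrm{Im}\,F_{q,m}(2bq+\lambda)$, which shows that the Fermi Golden Rule $\mathcal F_{q,m,\lambda}$ fails only if \emph{every} overlap integral
\[
I_{j,l}(V) := \int_0^\infty\int_\re \varphi_{j,m}(\varrho)\varphi_{q,m}(\varrho)\,\psi(x_3)\Psi_l(x_3;2b(q-j)+\lambda)\,V(\varrho,x_3)\,dx_3\,\varrho\,d\varrho
\]
vanishes, for $j=m_-,\dots,q-1$ and $l=1,2$. So it suffices to perturb an arbitrary $V\in\mathcal D_\nu$ slightly, inside $\mathcal D_\nu$, so as to make at least one $I_{j,l}$ nonzero for each pair $(q,m)$ with $q>m_-$ — and $j=q-1$ is a convenient choice, since Lemma~\ref{l31} already handles the $\varrho$-integral.

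First I would separate the variables: look for the correction in product form $W(\varrho)\,\eta(x_3)$, or more precisely add $\tau\,W(\varrho)\,\eta(x_3)$ to $V$ with $\tau\in\re$ small. By Lemma~\ref{l31} pick $W\in\lre$ (or a Schwartz approximant thereof) with $\int_0^\infty\varphi_{q-1,m}\varphi_{q,m}W\,\varrho\,d\varrho\neq0$ for all $m\in\Z$, $q>m_-$ simultaneously. For the longitudinal factor, recall from Subsection~2.2 that the real and imaginary parts of $\Psi_l(\cdot;E)$, $l=1,2$, do not vanish identically for $E>0$; hence $\psi(\cdot)\Psi_l(\cdot;E)$ is not identically zero, and one can choose $\eta\in C_0^\infty(\re;\re)$ (or Schwartz, to stay in $\mathcal D_\nu$) with $\int_\re\psi(x_3)\Psi_l(x_3;2b(q-j)+\lambda)\eta(x_3)\,dx_3\neq0$. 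The subtlety is that this must hold simultaneously for the countably many energies $E_{j} = 2b(q-j)+\lambda$ arising as $(q,m,j)$ range over the relevant index set; I would handle this exactly as in Lemma~\ref{l31}, by fixing a single $\eta$ up to a one-parameter deformation $\eta_\alpha(x_3)=\eta(\alpha x_3)$ or $e^{-\alpha x_3^2}\mathcal Q(x_3)$ and observing that for each $(q,m,j,l)$ the set of $\alpha$ for which the integral vanishes is the zero set of a nontrivial real-analytic function of $\alpha$, hence has empty interior; a countable union of such sets is still meager, so a good $\alpha$ exists. Then $\mathrm{Im}\,F_{q,m}(2bq+\lambda)>0$ for $V+\tau W\eta$ whenever $\tau\neq0$ is small enough to keep the quadratic form positive (or simply for generic $\tau$, the map $\tau\mapsto\mathrm{Im}\,F_{q,m}$ being a nontrivial polynomial in $\tau$ by \eqref{fgr2}).

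To make this work uniformly over the infinitely many pairs $(q,m)$ one must be a little careful: the correction $\tau W\eta$ must be \emph{fixed} (independent of $(q,m)$) and make \emph{all} the forms positive at once. I would therefore argue that, having fixed $W$ and $\eta$ as above so that $I_{q-1,l}(W\eta)\neq0$ for every admissible $(q,m,l)$, the exceptional set of $\tau$ is $\bigcup_{q,m}\{\tau : \mathrm{Im}\,F_{q,m}^{(V+\tau W\eta)}(2bq+\lambda)=0\}$, a countable union of finite sets (zeros of nonzero polynomials in $\tau$), hence with dense complement in any interval $(-\epsilon,\epsilon)$; pick such a $\tau$. Finally I would check that $V+\tau W\eta\in\mathcal D_\nu$ and $\|\tau W\eta\|_{\mathcal D_\nu}$ can be made $<\epsilon$: this is where the decay $W\in L^2(\re_+;\varrho\,d\varrho)$ and $\eta$ Schwartz (so each $x_3^j\partial_{x_3}^j(W\eta)$ is again of product form with an $L^2$ longitudinal factor) enter, together with the Leibniz rule; the norm $\|W\eta\|_{\mathcal D_\nu}$ is finite and scales linearly in $\tau$.

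The main obstacle, and the only genuinely nonroutine point, is the \emph{simultaneity} over the countable index set $\{(q,m): q>m_-\}$: one must produce a single perturbation that breaks \emph{all} the possible vanishings at once, rather than one perturbation per eigenvalue. The Baire-category / countable-meager-set argument (as already used for the set $\mathbb A$ in Lemma~\ref{l31}) resolves it, but it is essential that the relevant quantities depend real-analytically on the deformation parameters $\alpha$ and $\tau$, which follows from \eqref{fgr2} since $\varphi_{j,m}$, $\psi$, and $\Psi_l$ are all explicit smooth functions and the polynomial dependence in $\tau$ is built into the quadratic form. The rest — staying inside $\mathcal D_\nu$ with small norm, and the passage from $\mathrm{Im}\,F_{q,m}>0$ to validity of $\mathcal F_{q,m,\lambda}$ — is bookkeeping already prepared in Subsections~2.2–2.4 and in Lemma~\ref{l31}.
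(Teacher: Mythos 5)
Your proposal is correct and follows the paper's reduction exactly up to the last step: both arguments observe via \eqref{fgr2} that it suffices to make the single overlap integral with $j=q-1$, $l=1$ (i.e.\ $I_{q,m,\lambda}$ in \eqref{pp60}) nonzero for all $(q,m)$ simultaneously, both separate variables, and both invoke Lemma \ref{l31} for the transverse factor. Where you differ is in the closing density mechanism. The paper does not \emph{add} a small product term; it fixes $\omega:=\psi\,{\rm Re}\,\Psi_1(\cdot;2b+\lambda)$, projects $V$ onto $\omega$ in the $x_3$-variable to get $V_\perp(\varrho)=\int_\re\omega(x_3)V(\varrho,x_3)\,dx_3$, and \emph{replaces} $V_\perp$ by a nearby $\tilde V_\perp$ from Lemma \ref{l31}; then $I_{q,m,\lambda}(\tilde V)$ equals the transverse overlap with $\tilde V_\perp$ exactly, with no residual contribution from the original $V$ to cancel it. This substitution trick makes your genericity argument in $\tau$ (excluding a countable set of bad coupling constants) unnecessary, though your version of that step is also valid since $\tau\mapsto I_{q,m,\lambda}(V+\tau W\eta)$ is affine with nonzero slope. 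Your other worry — simultaneity over ``countably many energies $2b(q-j)+\lambda$'' in the choice of the longitudinal profile $\eta$ — is vacuous with your own choice $j=q-1$: then the energy is $2b+\lambda$ for every $(q,m)$, so one may simply take $\eta=\omega$ (giving $\int_\re\omega^2\,dx>0$) and no $\alpha$-deformation of $\eta$ is needed; the only genuine simultaneity issue lives in the transverse variable and is entirely absorbed by Lemma \ref{l31}, exactly as you anticipated. The norm estimate $\|V-\tilde V\|_{{\mathcal D}_\nu}^2\leq\varepsilon^2\sum_{j\le\nu}\|x^j\omega^{(j)}\|_{L^2}^2/\|\omega\|_{L^2}^2$ in the paper plays the role of your final bookkeeping step.
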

    \begin{proof}
    We will prove that the set of perturbations $V$ for which the
    integral
    \bel{pp60}
    I_{q,m, \lambda}(V) : = {\rm Re}\,\int_0^{\infty} \int_{\re} \varphi_{q-1,m}(\varrho) \varphi_{q,m}(\varrho) \psi(x_3)
\Psi_1(x_3; 2b + \lambda)  V(\varrho, x_3) dx_3 \varrho d\varrho
    \ee
    does not vanish for each $m \in {\mathbb Z}$ and $q > m_-$, is dense in
${\mathcal D}_{\nu}$. By \eqref{fgr2} this will imply the claim of
the theorem. Set
$$
\omega(x) : = \psi(x) {\rm Re}\,\Psi_1(x; 2b + \lambda), \quad x
\in \re.
$$
Note that $0 \neq \omega = \overline{\omega} \in {\mathcal
S}(\re)$. Set
$$
V_{\perp}(\varrho) = \int_{\re}  \omega(x_3) V(\varrho, x_3) dx_3,
\quad \varrho \in \re_+.
$$
Evidently, $V_{\perp} \in \lre$. Fix $\varepsilon > 0$. Applying
Lemma \ref{l31}, we find $\tilde{V}_{\perp} \in \lre$ such that
    \bel{x6}
    \int_0^{\infty}  \varphi_{q-1,m}(\varrho) \varphi_{q,m}(\varrho)
 \tilde{V}_{\perp}(\varrho) \varrho d\varrho \neq 0
    \ee
 for every $m \in {\mathbb Z}$, $q > m_-$, and
    \bel{x7}
    \|V_{\perp} - \tilde{V}_{\perp}\|_{L^2(\re_+;\varrho d\varrho)} < \varepsilon.
    \ee
    Set
    \bel{x8}
    \tilde{V}(\varrho, x_3) : = \frac{\tilde{V}_{\perp}(\varrho)
    \omega(x_3)}{\|\omega\|_{L^2(\re)}^2} + V(\varrho, x_3) - \frac{V_{\perp}(\varrho)
    \omega(x_3)}{\|\omega\|_{L^2(\re)}^2}, \quad \varrho \in \re_+,
    \quad x_3 \in \re.
    \ee
    We have
    $$
    I_{q,m,\lambda}(\tilde{V}) = \int_0^{\infty}  \varphi_{q-1,m}(\varrho) \varphi_{q,m}(\varrho)
 \tilde{V}_{\perp}(\varrho) \varrho d\varrho \neq 0
 $$
 for every $m \in {\mathbb Z}$, $q > m_-$. On the other hand,
 \eqref{x7} and \eqref{x8} imply
 $$
 \|V-\tilde{V}\|_{{\mathcal D}_{\nu}}^2 \leq
 \varepsilon^2
 \frac{\sum_{j=0}^{\nu} \int_{\re} x^{2j} \omega^{(j)}(x)^2 dx}{\|\omega\|^2_{L^2(\re)}}.
 $$
\end{proof}
Fix again $\nu \in {\mathbb Z}_+$. We will write $V \in {\mathcal
E}_{\nu}$ if $V : \re_+ \to \re$ is continuous and  tends to zero
at infinity, and the functions $x_3^j \frac{\partial^j V(\varrho,
x_3)}{\partial x_3^j}$, $j=1,\ldots, \nu$, are bounded. If
${\mathcal O}_{\nu}$ holds, and $V \in {\mathcal E}_{\nu}$, then
${\mathcal C}_{\nu}$ holds true. For $V \in {\mathcal E}_{\nu}$
define the norm
$$
\|V\|_{{\mathcal E}_{\nu}} : = \sum_{j=0}^{\nu}
\sup_{(\varrho,x_3) \in \re_+ \times \re}  \left|x_3^j
\frac{\partial^j V(\varrho, x_3)}{\partial x_3^j}\right|.
$$
Arguing as in the proof of Theorem \ref{t32}, from Lemma \ref{l31a} we obtain the
following
  \begin{theorem} \label{t32aa}
Assume that $v_0$ satisfies the hypotheses of Theorem \ref{t32}.
Fix $\nu \in {\mathbb Z}_+$. Then the set of perturbations $V :
\re_+ \times \re \to \re$ for which the Fermi Golden Rule
${\mathcal F}_{q,m,\lambda}$ is valid for each $m \in {\mathbb Z}$
and $q > m_-$, is dense in ${\mathcal E}_{\nu}$.
    \end{theorem}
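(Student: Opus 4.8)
The plan is to mimic closely the proof of Theorem \ref{t32}, replacing the $L^2$-type density argument based on Lemma \ref{l31} by the sup-norm density argument based on Lemma \ref{l31a}. First I would recall that, by formula \eqref{fgr2}, the Fermi Golden Rule ${\mathcal F}_{q,m,\lambda}$ holds as soon as at least one term in the double sum is nonzero; in particular it suffices to show that the set of perturbations $V \in {\mathcal E}_{\nu}$ for which the real part
$$
I_{q,m,\lambda}(V) = {\rm Re}\,\int_0^{\infty} \int_{\re} \varphi_{q-1,m}(\varrho)\varphi_{q,m}(\varrho)\psi(x_3)\Psi_1(x_3; 2b+\lambda) V(\varrho,x_3)\, dx_3\, \varrho\, d\varrho
$$
is nonzero for every $m \in {\mathbb Z}$ and $q > m_-$ is dense in ${\mathcal E}_{\nu}$. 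As before, setting $\omega(x) := \psi(x)\,{\rm Re}\,\Psi_1(x;2b+\lambda)$, one has $0 \neq \omega = \overline{\omega} \in {\mathcal S}(\re)$ (this uses \eqref{ppp100}, which guarantees $\psi \in {\mathcal S}(\re)$ and $\Psi_1(\cdot;2b+\lambda) \in C^\infty(\re)\cap L^\infty(\re)$, together with the non-vanishing of ${\rm Re}\,\Psi_1$ stated in Section 2.2).

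Next, given $V \in {\mathcal E}_{\nu}$ and $\varepsilon > 0$, I would form the radial profile $V_\perp(\varrho) := \int_{\re}\omega(x_3) V(\varrho,x_3)\, dx_3$. The point at which this proof genuinely differs from that of Theorem \ref{t32} is that one must check $V_\perp \in K(\re)$, i.e. that $V_\perp$ is continuous on $[0,\infty)$ and tends to $0$ at infinity; this follows from $V \in {\mathcal E}_{\nu}$ (so $V(\varrho,\cdot)$ is bounded uniformly in $\varrho$ and $V(\varrho,x_3) \to 0$ as $\varrho \to \infty$) combined with the rapid decay of $\omega$ and dominated convergence. Then Lemma \ref{l31a} supplies $\tilde{V}_\perp \in K(\re)$ with $\|V_\perp - \tilde{V}_\perp\|_{K(\re)} < \varepsilon$ and with \eqref{pp1} holding for $\tilde{V}_\perp$ at every $m \in {\mathbb Z}$, $q > m_-$. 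As in \eqref{x8} I would then set
$$
\tilde{V}(\varrho,x_3) := \frac{\tilde{V}_\perp(\varrho)\,\omega(x_3)}{\|\omega\|_{L^2(\re)}^2} + V(\varrho,x_3) - \frac{V_\perp(\varrho)\,\omega(x_3)}{\|\omega\|_{L^2(\re)}^2},
$$
so that $I_{q,m,\lambda}(\tilde V) = \int_0^\infty \varphi_{q-1,m}\varphi_{q,m}\tilde V_\perp\,\varrho\,d\varrho \neq 0$ for all relevant $m,q$.

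Finally I would estimate $\|V - \tilde V\|_{{\mathcal E}_{\nu}}$. One has $V - \tilde V = -\big((V_\perp - \tilde V_\perp)(\varrho)/\|\omega\|_{L^2(\re)}^2\big)\,\omega(x_3)$, whose $j$-th derivative in $x_3$ is $-\big((V_\perp-\tilde V_\perp)(\varrho)/\|\omega\|_{L^2(\re)}^2\big)\,\omega^{(j)}(x_3)$; multiplying by $x_3^j$ and taking the supremum over $(\varrho,x_3)$ gives
$$
\|V - \tilde V\|_{{\mathcal E}_{\nu}} \leq \frac{\|V_\perp - \tilde V_\perp\|_{K(\re)}}{\|\omega\|_{L^2(\re)}^2}\sum_{j=0}^{\nu}\sup_{x \in \re}\bigl|x^j\,\omega^{(j)}(x)\bigr| \leq C_{\nu,\omega}\,\varepsilon,
$$
the constant being finite since $\omega \in {\mathcal S}(\re)$. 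Hence $\tilde V$ can be made arbitrarily close to $V$ in ${\mathcal E}_{\nu}$, which proves density. The only mild obstacle is the verification that $V_\perp \in K(\re)$ — that is, confirming the decay and continuity of the radial profile under the hypotheses defining ${\mathcal E}_{\nu}$ — but this is a routine application of dominated convergence; everything else is a transcription of the argument for Theorem \ref{t32} with $\lre$ replaced by $K(\re)$ and the $L^2$-norm replaced by the sup-norm.
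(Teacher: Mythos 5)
Your proposal is correct and is exactly the argument the paper intends: the paper's "proof" of Theorem \ref{t32aa} consists of the single remark that one argues as in Theorem \ref{t32} but with Lemma \ref{l31a} in place of Lemma \ref{l31}, and you have carried out precisely that substitution, including the only genuinely new checks (that $V_\perp\in K(\re)$ and that the correction term is controlled in the $\|\cdot\|_{{\mathcal E}_\nu}$ norm via $\sup_x|x^j\omega^{(j)}(x)|<\infty$). The stray minus sign in your expression for $V-\tilde V$ is immaterial since only its norm is used.
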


{\bf 5.2.} For ${\mathcal  H}$ a subspace of $L^r(\re)$, let us
introduce the space
\begin{equation}\label{defhdagger}
{\mathcal  H}^\dagger:= \{ \omega \in {\mathcal S}(\re)  \; | \;
\forall \chi \in {\mathcal H}, \; \int_\re \omega(x)
\chi(x)dx=0\}.
\end{equation}
Clearly, if  $C^{\infty}_b(\re)\subset {\mathcal  H}$ then
${\mathcal H}^\dagger= \{0\}$. This property holds yet if $
H^\infty(S_\theta)\subset {\mathcal  H}$ where
$H^\infty(S_\theta)$ is the set of smooth bounded functions on
$\re$ admitting analytic extension on $S_\theta $. It is enough to
note that if $\omega(x_0)\neq 0$ then for $C$ sufficiently large,
the function $\chi(x_3):= e^{-C(x_3-x_0)^2}$ is in
$H^\infty(S_\theta)$ and satisfies $\int_\re \omega(x) \chi(x)dx
\neq 0$.

\begin{theorem} \label{t32a}
Assume that $v_0$ satisfies the hypotheses of Theorem \ref{t32}.
Let $p \geq 1$, $r\geq 1$,  $\delta \in \re$.\\
Let ${\mathcal  H}$ be a Banach space contained in  $L^r(\re)$
such that the injection ${\mathcal  H} \hookrightarrow L^r(\re)$
is
continuous, and  ${\mathcal  H}^\dagger= \{0\}$.\\
Then the set of real perturbations $V : \re_+ \times \re \to \re$,
for which the Fermi Golden Rule ${\mathcal F}_{q,m,\lambda}$ is
valid for each $m \in {\mathbb Z}$ and $q > m_-$, is dense in
$L^p(\re_+,  \langle \varrho \rangle^{\delta}\varrho d\varrho;
{\mathcal  H})$.
    \end{theorem}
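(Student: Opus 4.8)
The plan is to retain the reduction used in the proof of Theorem~\ref{t32} while replacing its explicit constructive step by a soft category argument, which is what the generality of ${\mathcal H}$ requires. Write ${\mathcal X}:=L^p(\re_+,\langle\varrho\rangle^{\delta}\varrho\,d\varrho;{\mathcal H})$. By \eqref{fgr2} one has, for every $m\in\Z$ and every $q>m_-$, the bound $\mathrm{Im}\,F_{q,m}(2bq+\lambda)\ge\pi\,|I_{q,m,\lambda}(V)|^{2}$, with $I_{q,m,\lambda}(V)$ as in \eqref{pp60}; hence it suffices to prove that
$$
\big\{\,V\in{\mathcal X}\ :\ I_{q,m,\lambda}(V)\neq0\ \text{ for all }m\in\Z,\ q>m_-\,\big\}
$$
is dense in ${\mathcal X}$, since for such $V$ the inequality \eqref{ser4a} holds and the Fermi Golden Rule ${\mathcal F}_{q,m,\lambda}$ is valid at every admissible pair $(q,m)$.

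First I would recall, as noted in the proof of Theorem~\ref{t32} (here $v_0$ satisfies the hypotheses of that theorem), that $\omega:=\psi\,\mathrm{Re}\,\Psi_1(\cdot\,;2b+\lambda)$ is a nonzero element of ${\mathcal S}(\re)$. Since ${\mathcal H}^\dagger=\{0\}$ and $\omega\in{\mathcal S}(\re)\setminus\{0\}$, there is $\chi_0\in{\mathcal H}$ with $\int_\re\omega(x)\chi_0(x)\,dx=1$; the pairing is well defined because $\omega\in L^{r'}(\re)$ while $\chi_0\in L^r(\re)$. As $V$ and $\varphi_{j,m},\psi$ are real, one rewrites, for $m\in\Z$ and $q>m_-$ (so that $q-1\ge m_-$ and $\varphi_{q-1,m}$ is defined),
$$
I_{q,m,\lambda}(V)=\Lambda_{q,m}(V):=\int_0^{\infty}\Big(\int_\re\omega(x_3)\,V(\varrho,x_3)\,dx_3\Big)\,\varphi_{q-1,m}(\varrho)\,\varphi_{q,m}(\varrho)\,\varrho\,d\varrho .
$$
Two facts about $\Lambda_{q,m}$ must be checked. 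It is a bounded linear functional on ${\mathcal X}$: by the continuous injection ${\mathcal H}\hookrightarrow L^r(\re)$ and H\"older, $|\int_\re\omega\,V(\varrho,\cdot)\,dx_3|\le C\,\|V(\varrho,\cdot)\|_{{\mathcal H}}$, and then the remaining $\varrho$-integral is dominated by a constant times $\|V\|_{{\mathcal X}}$ because $\varphi_{q-1,m}\varphi_{q,m}$ decays like a Gaussian, so $|\varphi_{q-1,m}\varphi_{q,m}|\,\langle\varrho\rangle^{-\delta}\in L^{p'}(\re_+;\langle\varrho\rangle^{\delta}\varrho\,d\varrho)$ for every $\delta\in\re$ and $p\ge1$. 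It is nonzero: evaluating on $V_0(\varrho,x_3):=\langle\varrho\rangle^{-\delta}\varphi_{q-1,m}(\varrho)\varphi_{q,m}(\varrho)\chi_0(x_3)\in{\mathcal X}$ gives $\Lambda_{q,m}(V_0)=\int_0^{\infty}|\varphi_{q-1,m}\varphi_{q,m}|^{2}\langle\varrho\rangle^{-\delta}\varrho\,d\varrho>0$.

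Consequently ${\mathcal Z}_{q,m}:=\ker\Lambda_{q,m}$ is a proper closed linear subspace, hence a nowhere dense subset, of the Banach space ${\mathcal X}$. The index set $\{(q,m):m\in\Z,\ q>m_-\}$ is countable, so $\bigcup_{q,m}{\mathcal Z}_{q,m}$ is of the first category in ${\mathcal X}$, and by the Baire category theorem its complement $\{V\in{\mathcal X}:\Lambda_{q,m}(V)\neq0\ \forall(q,m)\}$ is a dense $G_\delta$ subset of ${\mathcal X}$. By the reduction above, every $V$ in this set is a perturbation for which ${\mathcal F}_{q,m,\lambda}$ holds for all $m\in\Z$ and $q>m_-$, which is the assertion.

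The step that needs the most care is the boundedness — hence continuity, hence closedness of the hyperplanes ${\mathcal Z}_{q,m}$ — of the functionals $\Lambda_{q,m}$ on the weighted, ${\mathcal H}$-valued space ${\mathcal X}$: one must combine the continuous embedding ${\mathcal H}\hookrightarrow L^r(\re)$ with the Gaussian decay of $\varphi_{q-1,m}\varphi_{q,m}$ in order to absorb the arbitrary polynomial weight $\langle\varrho\rangle^{\delta}$ and the arbitrary exponent $p$. The hypothesis ${\mathcal H}^\dagger=\{0\}$ enters exactly once, to produce the test function $\chi_0$ that makes each $\Lambda_{q,m}$ nonzero; everything else — in particular the passage from the constructive approximation used for Theorem~\ref{t32} to the present Baire argument — is soft.
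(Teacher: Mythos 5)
Your proposal is correct and follows essentially the same route as the paper: both reduce the Fermi Golden Rule to the non-vanishing of the functionals $I_{q,m,\lambda}$, verify via H\"older and the continuous embedding ${\mathcal H}\hookrightarrow L^r(\re)$ that these are bounded linear functionals on $L^p(\re_+,\langle\varrho\rangle^{\delta}\varrho\,d\varrho;{\mathcal H})$, use ${\mathcal H}^\dagger=\{0\}$ to exhibit an element on which each functional is nonzero, and conclude by Baire category over the countable family of pairs $(q,m)$. The only cosmetic difference is that you invoke the nowhere-density of a proper closed hyperplane where the paper explicitly approximates a given $V$ by $V+\tfrac1n\chi_0\Phi$; these are the same argument.
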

    \begin{proof}
    As in the case of Theorems \ref{t32} -- \ref{t32aa},
    we will prove that the set of perturbations $V$ for which the
    integral \eqref{pp60}
 does not vanish for each $m \in {\mathbb Z}$ and $q > m_-$, is dense in
$L^p(\re_+,  \langle \varrho \rangle^{\delta}\varrho d\varrho;
{\mathcal  H})$.\\
Let
$${\mathcal M}_{q,m,\lambda}:=
\{ V \in L^p(\re_+, \langle \varrho \rangle^{\delta}\varrho
d\varrho; {\mathcal  H}); \; I_{q,m,\lambda}(V)\neq 0 \}.
$$
Since for $1/p' + 1/p =1$ and  $1/r' + 1/r =1$ we have
$$
\vert  I_{q,m,\lambda}(V) \vert \leq \|
\varphi_{q-1,m}\varphi_{q,m}\|_{ L^{p'}(\re_+,  \langle \varrho
\rangle^{-\delta}\varrho d\varrho; \re)} \; \| \omega
\|_{L^{r'}(\re)} \; \|  V \|_{ L^{p}(\re_+, \langle \varrho
\rangle^{\delta}\varrho d\varrho;L^{r}(\re)  )},
    $$
the continuity of the injection ${\mathcal  H} \hookrightarrow
L^r(\re)$ implies that ${\mathcal M}_{q,m,\lambda}$ is an open
subset of the Banach space $L^{p}(\re_+,  \langle \varrho
\rangle^{\delta} \varrho d\varrho; {\mathcal  H})$. Then according
to the Baire lemma, we have only to check that each ${\mathcal
M}_{q,m,\lambda}$ is dense in $L^{p}(\re_+, \langle \varrho
\rangle^{\delta} \varrho d\varrho; {\mathcal H})$. Let $V \in
L^{p}(\re_+,  \langle \varrho \rangle^{\delta} \varrho d\varrho;
{\mathcal  H})\setminus
{\mathcal M}_{q,m,\lambda}$.\\
Since $0 \neq \omega = \overline{\omega} \in {\mathcal S}(\re)$,
the assumptions on ${\mathcal H}$ imply the existence of a $\Phi
\in {\mathcal H}$ such that
$$
\int_{\re}  \omega(x_3) \Phi(x_3) dx_3 \neq 0.
$$
Moreover, $\varrho \mapsto  \varphi_{q-1,m}(\varrho)
\varphi_{q,m}(\varrho)\varrho$
 is a product of  polynomial and exponential functions. Then there exists
 $\varrho_0 \in \re_+$ such that $ \varphi_{q-1,m}(\varrho_0) \varphi_{q,m}(\varrho_0)\varrho_0\neq 0$,  and for $\chi_0$ supported near $\varrho_0$, we have
$$ \int_0^{\infty}  \varphi_{q-1,m}(\varrho) \varphi_{q,m}(\varrho)
 \chi_0(\varrho) \varrho d\varrho \neq 0.$$
Consequently, $\left\{V(\varrho,x_3) + \frac{1}{n} \chi_0(\varrho)
\Phi(x_3)\right\}_{n \in {\mathbb N}}$  is a sequence of functions
in ${\mathcal M}_{q,m,\lambda}$ tending to $V$ in  $L^{p}(\re_+,
\langle \varrho \rangle^{\delta} \varrho d\varrho;  {\mathcal
H})$.
\end{proof}

\section{Singularities of the spectral shift function}
\setcounter{equation}{0}

{\bf 6.1.} Suppose that $v_0$ satisfies \eqref{31}. Assume
moreover that the perturbation $V : \rt \to \re$ satisfies
    \eqref{33} with $\mpe > 2$ and $m_3 =m_0> 1$.
Then the multiplier by $V$ is a relatively trace-class
perturbation of $H$. Hence, the spectral shift function (SSF)
$\xi(\cdot; H + V, H)$ satisfying the Lifshits-Krein trace formula
$$
{\rm Tr} (f(H + V) - f(H)) = \int_{\re} f'(E) \xi(E; H + V, H) dE,
\quad f \in C_0^{\infty}(\re),
$$
and normalized by the condition $\xi(E; H + V, H)=0$ for $E < \inf
\sigma(H + V)$, is well-defined as an element of $L^1(\re; \langle
 E \rangle^{-2}dE)$ (see \cite{lif}, \cite{kr}).\\
If $E <  \inf \sigma(H)$, then the spectrum of $H + V$ below $E$
could be at most discrete, and for almost every $E < \inf
\sigma(H)$ we have
    $$
\xi(E; H + V, H) = - {\rm rank}\,{\mathbb P}_{(-\infty,E)}(H + V).
$$
 On the other hand,  for almost every
$E \in \sigma_{\rm ac}(H) = [0,\infty)$, the SSF $\xi(E; H + V,
H)$ is related to the scattering determinant ${\rm det}\;S(E; H +
V, H)$ for the pair $(H + V, H)$ by {\em the Birman-Krein} formula
$$
{\rm det}\;S(E; H + V, H) = e^{-2\pi i \xi(E; H + V, H)}
$$
(see \cite{bk}). \\
Set
 $$
 {\mathcal Z}: = \{E \in \re | E = 2bq + \mu, \; q \in
 {\mathbb Z}_+, \; \mu \in \sigma_{\rm disc}(\hpa) \; {\rm or} \; \mu = 0\}.
 $$
 Arguing as in the proof of \cite[Proposition 2.5]{BPR}, we can
 easily check the validity of the following
 \begin{pr} \label{p31}
 Let $v_0$ and $V$ satisfy \eqref{31}, and \eqref{33} with $\mpe > 2$ and $m_0 = m_3 > 1$.
 Then the SSF
 $\xi(\cdot;H + V, H)$ is bounded on every compact subset of
 $\re\setminus{\mathcal Z}$, and is continuous on $\re\setminus({\mathcal Z} \cup
 \sigma_p(H+V))$, where $\sigma_p(H+V)$ denotes the set of the eigenvalues of the operator $H+V$.
 \end{pr}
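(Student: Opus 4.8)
The plan is to reduce Proposition~\ref{p31} to a limiting absorption principle for the operator $H$ away from $\mathcal Z$, and then to read off the claimed regularity of $\xi(\cdot;H+V,H)$ from the boundary values of a perturbation determinant, arguing as in \cite[Proposition~2.5]{BPR}. First I would factorize $V = GW$ with $W:=|V|^{1/2}$ and $G:=|V|^{1/2}\operatorname{sgn}V$, both being multipliers obeying $O(\langle\xp\rangle^{-\mpe/2}\langle x_3\rangle^{-m_3/2})$ by \eqref{33}. Since under the stated hypotheses $V$ is a relatively trace-class perturbation of $H$, the operator $W(H-z)^{-1}G$ is trace class for $z$ in the resolvent set of $H$, the perturbation determinant $D(z):=\det\bigl(I+W(H-z)^{-1}G\bigr)$ is analytic and non-vanishing on $\C_+$, and by the Birman--Krein theory (see \cite{bk}) one has $\xi(E;H+V,H)=\pi^{-1}\operatorname{Im}\log D(E+i0)$ for almost every $E$ at which the boundary value exists. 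Hence it suffices to prove that for every $E_0\in\re\setminus\mathcal Z$ the function $\delta\mapsto W(H-E-i\delta)^{-1}G$ converges in trace norm as $\delta\downarrow 0$, locally uniformly in $E$ near $E_0$, to a trace-norm-continuous limit. Granting this, $D$ extends continuously to $\C_+\cup(\re\setminus\mathcal Z)$; on $\re\setminus\mathcal Z$ its zeros are isolated (otherwise $D$ would vanish identically on a connected component, which is impossible since $D(z)\to 1$ as $\re\ni z\to-\infty$, $z<\inf\sigma(\hpa)$), they coincide with multiplicity with the eigenvalues of $H+V$ lying in $\re\setminus\mathcal Z$, and away from them $\operatorname{Im}\log D(\cdot+i0)$ is continuous with finite variation on every compact subinterval of $\re\setminus\mathcal Z$; this yields at once the boundedness of $\xi$ on compact subsets of $\re\setminus\mathcal Z$ and its continuity on $\re\setminus(\mathcal Z\cup\sigma_p(H+V))$.

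The limiting absorption estimate I would get from the Landau-level decomposition $(H-z)^{-1}=\sum_{q\geq 0}p_q\otimes(\hpa+2bq-z)^{-1}$. Fix $E_0\in\re\setminus\mathcal Z$. For the finitely many indices $q$ with $2bq<E_0$ the channel energy $E_0-2bq$ is strictly positive, is not $0$, and is not a discrete eigenvalue of $\hpa$ — all three being excluded precisely because $E_0\notin\mathcal Z$ — so by the Kato theorem it is not an eigenvalue of $\hpa$ at all, and by the limiting absorption principle \eqref{35}--\eqref{36} the operator $\langle x_3\rangle^{-s}(\hpa-(E-2bq)-i\delta)^{-1}\langle x_3\rangle^{-s}$ converges in operator norm as $\delta\downarrow 0$, locally uniformly in $E$ near $E_0$, for every $s>1/2$. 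Choosing $s=m_3/2>1/2$ lets the $x_3$-weight of $W,G$ play the role of $\langle x_3\rangle^{-s}$, while the $\xp$-weight $\langle\xp\rangle^{-\mpe}\in L^1(\rd)$ (this is where $\mpe>2$ enters), together with the constancy of the diagonal of the integral kernel of $p_q$, makes $Wp_qG$ trace class; thus each of these finitely many terms of $W(H-E-i\delta)^{-1}G$ converges in trace norm to a trace-norm-continuous limit. For the infinitely many remaining indices $q$ with $2bq>E_0$, for $E$ near $E_0$ and $\delta$ small the numbers $E-2bq-i\delta$ stay in a fixed half-plane $\{\operatorname{Re}\zeta<2bq-c\}$ and, for $q$ large, in the resolvent set of $\hpa$, so the corresponding resolvents are analytic across the real axis near $E_0$; the trace-norm convergence of this tail, uniformly in $\delta\geq 0$ near $E_0$, then follows from the same estimates that make $V$ relatively trace-class with respect to $H$, i.e. from $\mpe>2$ and $m_3=m_0>1$, exactly as in the proof of \cite[Proposition~2.5]{BPR}.

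The main obstacle is this last point: controlling the infinite sum over the closed channels in trace norm uniformly up to the real axis. The delicate feature is that the trace norm of the $x_3$-weighted one-dimensional resolvent $\langle x_3\rangle^{-s}(\hpa-(E-2bq))^{-1}\langle x_3\rangle^{-s}$ only decays like $(2bq)^{-1/2}$, whereas the trace norm of the Landau-projection sandwich built from $W$ and $G$ is bounded below away from zero and independent of $q$; the summability is nevertheless recovered by exploiting the structure and the mutual orthogonality of the projections $p_q$, which is precisely the bookkeeping carried out in \cite[Proposition~2.5]{BPR}, and which I would adapt essentially verbatim to the operator pair $(H+V,H)$. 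Everything else — the factorization of $V$, the identification of the boundary values of $D$, and the passage from trace-norm continuity of the sandwiched resolvent to the stated boundedness and continuity of $\xi$ — is routine.
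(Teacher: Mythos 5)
Your reduction to a limiting absorption principle is sound, and the channel-by-channel analysis is exactly the right analytic core: the finitely many open channels are handled by Kato's theorem plus \eqref{35}--\eqref{36} with $s=m_3/2$, the transverse weight $\langle \xp\rangle^{-\mpe}\in L^1(\rd)$ makes the Landau-projection sandwiches trace class (this is \eqref{ssf7} in Section 6), and the closed-channel tail is the genuinely delicate part. Note, however, that the paper (following \cite[Proposition 2.5]{BPR}, to which it defers) does \emph{not} go through the perturbation determinant: it uses Pushnitski's representation of the SSF (the analogue of \eqref{ssf27a}, after splitting $V=V_+-V_-$ into sign-definite pieces if necessary), which requires only operator-norm continuity of ${\rm Re}\,T(E+i0)$ and trace-norm continuity of ${\rm Im}\,T(E+i0)$ --- in particular the tail only needs to be controlled in Hilbert--Schmidt norm, as in Proposition \ref{p33}, not in trace norm as your route demands.

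The genuine gap is in your last deduction: from continuity of $D(\cdot+i0)$ on $\re\setminus{\mathcal Z}$ you conclude that ${\rm Im}\log D(\cdot+i0)$ has \emph{finite variation} near a real zero of $D$, hence that $\xi$ is bounded there. This does not follow. A function analytic in $\C_+$ and continuous up to the boundary can vanish at a single real point while its boundary argument is unbounded near that point (e.g.\ $(\zeta-E_1)e^{-i/(\zeta-E_1)}$); continuity of $D$ alone gives no order of vanishing and no control of the winding of $\arg D$. Since the proposition asserts boundedness of $\xi$ on compact subsets of $\re\setminus{\mathcal Z}$ that may contain embedded eigenvalues of $H+V$ (only the \emph{continuity} statement excludes $\sigma_p(H+V)$), this is precisely the nontrivial part of the claim, and it is the part your argument does not reach. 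The same difficulty infects your assertion that the real zeros of $D(\cdot+i0)$ are isolated: boundary zeros of a function analytic in $\C_+$ need not be isolated, and nothing in the hypotheses of the proposition (no Mourre estimate, no analyticity) rules out a non-discrete set of embedded eigenvalues. Pushnitski's representation circumvents both problems at once, since it bounds $|\xi(E)|$ by $n_{\mp}(1/2;{\rm Re}\,T(E)) + C\,\|{\rm Im}\,T(E)\|_1$ uniformly on compacts, irrespective of whether $-1$ lies in the spectrum of $T(E)$. To repair your proof you would either have to switch to that representation, or supply an independent argument bounding the increment of $\arg D(\cdot+i0)$ across its real zeros.
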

 In what follows we will assume in addition that
  \bel{32}
0 \leq V({\bf x}), \quad {\bf x} \in \rt,
    \ee
    and will consider the operators $H \pm V$ which are sign-definite
    perturbations of the operator $H$.
 The goal of this section is to investigate the asymptotic
 behaviour of the SSF $\xi(\cdot;H \pm V, H)$ near the energies
 which are eigenvalues of $H$ of infinite multiplicity. More precisely,
 if \eqref{ser7} holds true, and $\lambda \in \sigma_{\rm disc}(\hpa)$, we
 will study the asymptotics as $\eta \to 0$ of
 $\xi(2bq + \lambda + \eta;H \pm V, H)$, $q \in {\mathbb Z}_+$,
 being fixed. \\
 Let $T$ be a compact self-adjoint operator. For $s>0$ denote
 $$
 n_{\pm}(s;T) : = {\rm rank}\,{\mathbb P}_{(s,\infty)}(\pm T), \quad
 n_*(s;T) : = n_{+}(s;T) + n_{-}(s;T).
 $$
Put
$$
U(\xp) : = \int_{\re} V(\xp,x_3) \psi(x_3)^2 dx_3, \quad \xp \in
\rd,
$$
the eigenfunction $\psi$ being defined in \eqref{ser2}.

\begin{theorem} \label{t31}
Let $v_0$ and $V$ satisfy \eqref{31}, \eqref{33} with $\mpe > 2$
and $m_0 = m_3 > 1$, and \eqref{32}. Assume that \eqref{ser7}
holds true, and $\lambda \in \sigma_{\rm disc}(\hpa)$. Fix $q \in
{\mathbb Z}_+$. Then for each $\varepsilon \in (0,1)$ we have
    \bel{ssf31}
    n_+((1+\varepsilon) \eta; p_q U p_q) + O(1) \leq
    \pm \xi(2bq + \lambda \pm \eta; H \pm V, H) \leq
    n_+((1-\varepsilon) \eta; p_q U p_q) + O(1),
    \ee
    \bel{ssf32}
\xi(2bq + \lambda \mp \eta; H \pm V, H) = O(1),
    \ee
as $\eta \downarrow 0$.
\end{theorem}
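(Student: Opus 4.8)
The plan is to follow the strategy used for the two-dimensional Landau-level analogue in \cite{BPR}: combine Pushnitski's representation of the spectral shift function for a sign-definite, relatively trace-class perturbation with a resolvent decomposition of $H$ that isolates the singular channel at the energy $2bq+\lambda$. Since $V\geq 0$ by \eqref{32} and satisfies \eqref{33} with $\mpe>2$ and $m_0=m_3>1$, the multiplier by $V$ is a relatively trace-class perturbation of $H$; hence, denoting by $V^{1/2}$ the non-negative square root, for every real $E$ in a small neighbourhood of $2bq+\lambda$ the Birman--Schwinger operator $\mathcal{A}(E):=V^{1/2}(H-E-i0)^{-1}V^{1/2}$ is a well-defined trace-class operator with $\mathrm{Im}\,\mathcal{A}(E)\geq 0$, and its boundary value is locally uniformly bounded in trace norm, by the limiting absorption principle \eqref{35}--\eqref{36} for $\hpa$ and the absence of positive eigenvalues of $\hpa$. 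Pushnitski's formula (applied to the perturbation determinants $\det(I\pm\mathcal{A}(E+i0))$ of the pairs $(H\pm V,H)$) then gives, for a.e.\ such $E$,
\[
\xi(E;H+V,H)=\frac1\pi\int_{\re}n_-\big(1;\,\mathrm{Re}\,\mathcal{A}(E)+t\,\mathrm{Im}\,\mathcal{A}(E)\big)\,\frac{dt}{1+t^2},
\]
\[
\xi(E;H-V,H)=-\frac1\pi\int_{\re}n_+\big(1;\,\mathrm{Re}\,\mathcal{A}(E)+t\,\mathrm{Im}\,\mathcal{A}(E)\big)\,\frac{dt}{1+t^2}.
\]

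The next step is the resolvent decomposition. Writing $H=\hope\otimes I_{\parallel}+I_{\perp}\otimes\hpa=\sum_{q'\geq0}p_{q'}\otimes(2bq'+\hpa)$ one has $(H-z)^{-1}=\sum_{q'\geq0}p_{q'}\otimes\big(\hpa-(z-2bq')\big)^{-1}$. For $z$ near $2bq+\lambda$ only the term $q'=q$ is singular: since $\lambda\in(-2b,0)$ is an isolated simple eigenvalue of $\hpa$ with eigenprojection $\ppar$ (see \eqref{ser2}), we have $\big(\hpa-(z-2bq)\big)^{-1}=(2bq+\lambda-z)^{-1}\ppar+[\text{operator analytic at }z=2bq+\lambda]$; the terms $q'<q$ have $z-2bq'\to 2b(q-q')+\lambda>0$ in $\sigma_{\rm ac}(\hpa)$ away from threshold, so their boundary values are controlled by \eqref{35}--\eqref{36}; and the terms $q'>q$ have $z-2bq'<\lambda$ below $\sigma(\hpa)$, hence are analytic and summable. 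Setting ${\mathcal P}_q:=p_q\otimes\ppar$ we obtain
\[
\mathcal{A}(E)=\frac{1}{2bq+\lambda-E}\,V^{1/2}{\mathcal P}_qV^{1/2}+\mathcal{A}_{\mathrm{reg}}(E),
\]
where $X:=V^{1/2}{\mathcal P}_qV^{1/2}\geq0$ and $\mathcal{A}_{\mathrm{reg}}(E)$ are trace class with trace norms bounded uniformly for $E$ near $2bq+\lambda$, and $\mathrm{Im}\,\mathcal{A}(E)=\mathrm{Im}\,\mathcal{A}_{\mathrm{reg}}(E)$ (the singular term and all $q'\geq q$ terms being self-adjoint). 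Here $\mpe>2$ and $m_3>1$ enter: the transverse weight $\langle\xp\rangle^{-\mpe/2}$ carried by $V^{1/2}$ makes $\langle\xp\rangle^{-\mpe/2}p_{q'}\langle\xp\rangle^{-\mpe/2}$ trace class ($\Tr=\frac{b}{2\pi}\int_{\rd}\langle\xp\rangle^{-\mpe}\,d\xp<\infty$ iff $\mpe>2$), while $m_3/2>1/2$ is exactly the threshold in \eqref{35}--\eqref{36}. Finally $X$ has the same non-zero eigenvalues as ${\mathcal P}_qV{\mathcal P}_q$, which via the isometry $f\mapsto f\otimes\psi$ ($f\in\Ran p_q$, $\|\psi\|=1$) is unitarily equivalent to the Berezin--Toeplitz operator $p_qUp_q$; hence $n_+(s;X)=n_+(s;p_qUp_q)$ for every $s>0$.

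It remains to extract the asymptotics with the Weyl inequalities $n_\pm(s_1+s_2;T_1+T_2)\leq n_\pm(s_1;T_1)+n_\pm(s_2;T_2)$. Consider first $\xi(2bq+\lambda+\eta;H+V,H)$ with $\eta\downarrow0$: then $(2bq+\lambda-E)^{-1}=-\eta^{-1}$, so $n_-(1;\mathrm{Re}\,\mathcal{A}(E)+t\,\mathrm{Im}\,\mathcal{A}(E))=n_+\big(1;\eta^{-1}X-Y_t(E)\big)$ with $Y_t(E):=\mathrm{Re}\,\mathcal{A}_{\mathrm{reg}}(E)+t\,\mathrm{Im}\,\mathcal{A}_{\mathrm{reg}}(E)$ self-adjoint, and for $\varepsilon\in(0,1)$
\[
n_+((1+\varepsilon)\eta;X)-n_+(\varepsilon;Y_t(E))\;\leq\;n_+\big(1;\eta^{-1}X-Y_t(E)\big)\;\leq\;n_+((1-\varepsilon)\eta;X)+n_+(\varepsilon;-Y_t(E)).
\]
Integrating against $\pi^{-1}(1+t^2)^{-1}dt$ (total mass $1$) reproduces $n_+((1\pm\varepsilon)\eta;p_qUp_q)$ from the $t$-independent terms, while the rest is $O(1)$: one has $n_+(\varepsilon;\pm Y_t)\leq n_+(\varepsilon/2;\pm\mathrm{Re}\,\mathcal{A}_{\mathrm{reg}})+n_+(\varepsilon/2;\pm t\,\mathrm{Im}\,\mathcal{A}_{\mathrm{reg}})$, with $\pi^{-1}\!\int n_+(\varepsilon/2;\pm\mathrm{Re}\,\mathcal{A}_{\mathrm{reg}})(1+t^2)^{-1}dt\lesssim\varepsilon^{-1}\|\mathcal{A}_{\mathrm{reg}}(E)\|_1$ and, using the elementary bound $\pi^{-1}\!\int_{\re}n_+(c|t|^{-1};T)(1+t^2)^{-1}dt\leq 2c^{-1}\|T\|_1$ valid for trace-class $T\geq0$, $\pi^{-1}\!\int n_+(\varepsilon/2;\pm t\,\mathrm{Im}\,\mathcal{A}_{\mathrm{reg}})(1+t^2)^{-1}dt\lesssim\varepsilon^{-1}\|\mathrm{Im}\,\mathcal{A}_{\mathrm{reg}}(E)\|_1$; both are uniformly bounded near $2bq+\lambda$. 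This gives \eqref{ssf31} with the upper signs, and the lower-sign case follows identically from the $\xi(\cdot\,;H-V,H)$ representation at $2bq+\lambda-\eta$, where the singular term again enters with coefficient $+\eta^{-1}$. For \eqref{ssf32}, at $2bq+\lambda-\eta$ for $H+V$ (and at $2bq+\lambda+\eta$ for $H-V$) the singular coefficient has the sign for which the positive operator $\eta^{-1}X$ does not push any eigenvalue of the relevant operator past the threshold $1$ (adding a positive operator lowers $n_-$, subtracting one lowers $n_+$), so after integration only the $O(1)$ remainder survives. When $q=0$ the point $\lambda$ is isolated below $\sigma_{\rm ess}(H)$, $\mathrm{Im}\,\mathcal{A}\equiv0$, and the $t$-integral is absent, the argument collapsing to the classical Birman--Schwinger principle.

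The main obstacle is the resolvent decomposition together with the \emph{uniform} trace-norm control of $\mathcal{A}_{\mathrm{reg}}(E)$ and of $\mathrm{Im}\,\mathcal{A}(E)$ as $E\to2bq+\lambda$: this requires combining the limiting absorption principle \eqref{35}--\eqref{36} for the one-dimensional operator $\hpa$ over the finitely many transverse channels $q'<q$, the summability of the channels $q'>q$, the isolatedness of $\lambda$ in $\sigma(\hpa)$, and the quantitative trace-class bound for $\langle\xp\rangle^{-\mpe/2}p_{q'}\langle\xp\rangle^{-\mpe/2}$ — the same ingredient ($\mpe>2$) that underlies the relative trace-class property of $V$ and hence the applicability of Pushnitski's formula. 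A secondary but essential point is that the $t$-integration there does not destroy the leading term, which hinges on $\mathrm{Im}\,\mathcal{A}_{\mathrm{reg}}(E)$ being trace class rather than merely bounded.
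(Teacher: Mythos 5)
Your proposal follows essentially the same route as the paper's proof: Pushnitski's representation for the sign-definite perturbation, the channel decomposition of $(H-z)^{-1}$ isolating the singular term $(2bq+\lambda-z)^{-1}V^{1/2}(p_q\otimes\ppar)V^{1/2}$ whose counting function coincides with that of the Berezin--Toeplitz operator $p_qUp_q$, the limiting absorption principle for the finitely many channels $q'<q$, analyticity for $q'>q$, and Weyl inequalities to absorb the regular part into the $O(1)$ remainder. One small correction: the paper controls the sum over channels $q'>q$ only in Hilbert--Schmidt norm (the trace norms of $\langle\xp\rangle^{-\mpe/2}p_{q'}\langle\xp\rangle^{-\mpe/2}$ do not decay in $q'$, so your claim that all of $\mathcal{A}_{\mathrm{reg}}(E)$ is uniformly trace class is unjustified as stated), but this is harmless because that piece of the Weyl remainder can be bounded by $\varepsilon^{-2}\|\cdot\|_2^2$, and the trace-class property is genuinely needed only for $\operatorname{Im}\mathcal{A}$, which is supported on the finitely many channels $q'<q$ and is trace class exactly as you argue.
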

Applying the well known results on the spectral asymptotics for
compact Berezin-Toeplitz operators $p_q U p_q$ (see \cite{r0},
\cite{rw}), we obtain the following
\begin{follow} \label{f41}
    Assume the hypotheses of Theorem \ref{t31}. \\
    (i)
    Suppose that $U \in C^1(\rd)$, and
$$
U(\xp) = u_0(\xp/|\xp|) |\xp|^{-\alpha}(1 + o(1)), \quad |\xp| \to
\infty,
$$
$$
|\nabla U(\xp)| \leq C_1 \langle \xp \rangle^{-\alpha-1}, \quad
\xp \in \rd,
$$
where $\alpha > 2$, and $u_0$ is a continuous function on
${\mathbb S}^1$ which is non-negative and does not vanish
identically. Then we have
$$
\xi(2bq + \lambda \pm \eta; H \pm V,H) = \pm \frac{b}{2\pi}
\left|\left\{\xp \in \rd | U(\xp)
> \eta\right\}\right|\;(1 + o(1)) =
$$
$$
\pm \eta^{-2/\alpha} \frac{b}{4\pi} \int_{{\mathbb S}^1}
u_0(s)^{2/\alpha}ds \, (1 + o(1)), \quad \eta \downarrow 0,
$$
where $|.|$ denotes the Lebesgue measure.\\
    (ii) Let $U \in L^{\infty}(\rd)$. Assume that
$$
\ln{U(\xp)} = -\mu |\xp|^{2\beta}(1 + o(1)), \quad |\xp| \to
\infty,
$$
for some $\beta \in (0,\infty)$,  $\mu \in (0,\infty)$.
  Then we have
$$
\xi(2bq + \lambda \pm \eta; H \pm V,H) = \pm \varphi_{\beta}(\eta)
\; (1 + o(1)), \quad \eta \downarrow 0, \quad \beta \in
(0,\infty),
$$
where
$$
\varphi_{\beta}(\eta) : = \left\{
\begin{array} {l}
\frac{b}{2\mu^{1/\beta}} |\ln{\eta}|^{1/\beta} \quad {\rm if}
\quad 0
< \beta < 1,\\
\frac{1}{\ln{(1+2\mu/b)}}|\ln{\eta}| \quad {\rm if} \quad
\beta = 1, \\
\frac{\beta}{\beta - 1}(\ln|\ln{\eta}|)^{-1}|\ln{\eta}| \quad {\rm
if} \quad 1 < \beta < \infty,
\end{array}
\right. \quad \eta \in (0,e^{-1}).
$$
     (iii) Let $U \in L^{\infty}(\rd)$. Assume that the
support of $U$ is compact, and that there exists a constant $C>0$
such that $U \geq C$ on an open non-empty subset of $\rd$. Then we
have
$$
\xi(2bq + \lambda \pm \eta; H \pm V, H) = \pm
(\ln|\ln{\eta}|)^{-1}|\ln{\eta}| (1 + o(1)), \quad \eta \downarrow
0.
$$
\end{follow}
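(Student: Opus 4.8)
The plan is to insert the small-argument asymptotics of the eigenvalue counting function $n_+(\cdot\,;p_qUp_q)$ of the compact Berezin--Toeplitz operator $p_qUp_q$ into the two-sided bound \eqref{ssf31} of Theorem \ref{t31}, and then pass to the limit $\varepsilon\downarrow 0$. To begin, I would note that, since \eqref{33} holds with $\mpe>2$, the averaged potential $U(\xp)=\int_{\re}V(\xp,x_3)\psi(x_3)^2\,dx_3$ obeys $U(\xp)=O(\langle\xp\rangle^{-\mpe})$ and vanishes at infinity, so that the Berezin--Toeplitz operator $p_qUp_q$ acting on $\Ran p_q$ (with $p_q$ the orthogonal projection onto ${\rm Ker}(\hope-2bq)$) is compact and, by \eqref{32}, non-negative. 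In case (i), (ii) and (iii) respectively, the spectral results of \cite{r0} and \cite{rw} then give
$$
n_+(s;p_qUp_q)=\Phi(s)\,(1+o(1)),\qquad s\downarrow 0,
$$
with $\Phi(s)=\frac{b}{2\pi}\,|\{\xp\in\rd:U(\xp)>s\}|$ in case (i), $\Phi(s)=\varphi_\beta(s)$ in case (ii), $\Phi(s)=(\ln|\ln s|)^{-1}|\ln s|$ in case (iii); in every case $\Phi(s)\to+\infty$ as $s\downarrow 0$.

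Next I would run a squeezing argument. Put $\Xi(\eta):=\pm\,\xi(2bq+\lambda\pm\eta;H\pm V,H)$. Since $n_+(\cdot\,;p_qUp_q)$ is non-increasing, \eqref{ssf31} reads, for each fixed $\varepsilon\in(0,1)$ and as $\eta\downarrow 0$,
$$
\Phi((1+\varepsilon)\eta)(1+o(1))+O(1)\;\leq\;\Xi(\eta)\;\leq\;\Phi((1-\varepsilon)\eta)(1+o(1))+O(1).
$$
Dividing by $\Phi(\eta)$ and using $O(1)/\Phi(\eta)\to 0$, I reduce matters to the ratio $\Phi((1\pm\varepsilon)\eta)/\Phi(\eta)$. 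In cases (ii) and (iii), $\Phi$ is slowly varying at $0$, i.e.\ $\Phi(c\eta)/\Phi(\eta)\to 1$ as $\eta\downarrow 0$ for every fixed $c>0$ (it is built only from $|\ln\eta|$ and $\ln|\ln\eta|$), so the sandwich yields at once $\Xi(\eta)=\Phi(\eta)(1+o(1))$. In case (i), $\Phi(s)$ is comparable to $s^{-2/\alpha}$ by the volume computation below, whence $\Phi((1\pm\varepsilon)\eta)/\Phi(\eta)\to(1\pm\varepsilon)^{-2/\alpha}$; taking $\liminf$ and $\limsup$ in the displayed inequality gives
$$
(1+\varepsilon)^{-2/\alpha}\;\leq\;\liminf_{\eta\downarrow 0}\frac{\Xi(\eta)}{\Phi(\eta)}\;\leq\;\limsup_{\eta\downarrow 0}\frac{\Xi(\eta)}{\Phi(\eta)}\;\leq\;(1-\varepsilon)^{-2/\alpha},
$$
and letting $\varepsilon\downarrow 0$ produces $\Xi(\eta)=\Phi(\eta)(1+o(1))$ once more. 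Note that only \eqref{ssf31} is used; \eqref{ssf32} plays no role in the corollary.

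It remains, in case (i), to make $\Phi(\eta)$ explicit. Under $U\in C^1(\rd)$, $U(\xp)=u_0(\xp/|\xp|)|\xp|^{-\alpha}(1+o(1))$, $|\nabla U(\xp)|\leq C_1\langle\xp\rangle^{-\alpha-1}$ with $\alpha>2$ and $u_0$ continuous, non-negative, not identically zero on ${\mathbb S}^1$, the superlevel set $\{U>\eta\}$ coincides, up to a relative error $o(1)$, with the star-shaped domain $\{\xp:|\xp|<(u_0(\xp/|\xp|)/\eta)^{1/\alpha}\}$, whose Lebesgue measure equals $\frac12\,\eta^{-2/\alpha}\int_{{\mathbb S}^1}u_0(\omega)^{2/\alpha}\,d\omega\,(1+o(1))$. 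Hence $\Phi(\eta)=\frac{b}{4\pi}\,\eta^{-2/\alpha}\int_{{\mathbb S}^1}u_0(\omega)^{2/\alpha}\,d\omega\,(1+o(1))$, which is indeed comparable to $\eta^{-2/\alpha}$, and combining this with $\Xi(\eta)=\Phi(\eta)(1+o(1))$ delivers both displayed forms of the asymptotics in (i). Parts (ii) and (iii) then follow directly from $\Xi(\eta)=\Phi(\eta)(1+o(1))$.

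I do not expect a genuine analytic obstacle here: the substance of the corollary is entirely in Theorem \ref{t31} (which I may assume) and in the Berezin--Toeplitz asymptotics of \cite{r0} and \cite{rw} (which I would quote verbatim). The one point that requires care is bookkeeping with normalizations: one must check that the convention for the Landau-level projection $p_q$ of the \emph{shifted} Landau Hamiltonian $\hope$ matches the one under which \cite{r0}, \cite{rw} state their asymptotics, so that the density constant $b/(2\pi)$ comes out correctly, and that the $o(1)$ in those asymptotics is robust under the dilations $\eta\mapsto(1\pm\varepsilon)\eta$ — which it is, since it appears only as a multiplicative factor $(1+o(1))$ on the leading term.
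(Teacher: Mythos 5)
Your proposal is correct and is exactly the argument the paper intends: the corollary is stated as an immediate consequence of Theorem \ref{t31} combined with the Berezin--Toeplitz eigenvalue asymptotics of \cite{r0}, \cite{rw}, and your squeezing argument (slow variation of $\varphi_\beta$ and of the double-logarithmic function in cases (ii)--(iii), and the $\varepsilon\downarrow 0$ limit using $\Phi((1\pm\varepsilon)\eta)/\Phi(\eta)\to(1\pm\varepsilon)^{-2/\alpha}$ in case (i)) is precisely the routine step the authors leave to the reader. No gap.
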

{\em Remarks}: (i) The threshold behaviour of the SSF for various
magnetic quantum Hamiltonians has been studied in \cite{fr} (see
also \cite{r1}, \cite{r2}), and recently in \cite{brs}. The
singularities of the SSF described in Theorem \ref{t31} and
Corollary \ref{f41} are of somewhat different nature since $2bq +
\lambda$ is an infinite-multiplicity eigenvalue, and not a
threshold in the continuous spectrum of the unperturbed operator. \\
(ii) By the strict mathematical version of the Breit-Wigner
representation for the SSF (see \cite{pz1}, \cite{pz2}), the
resonances for various quantum Hamiltonians could be interpreted
as the poles of the SSF. In \cite{bbr} a Breit-Wigner
approximation of the SSF near the Landau level was obtained for
the 3D Schr\"odinger operator with constant magnetic field,
perturbed by a scalar potential satisfying \eqref{33} with $\mpe >
2$ and $m_3 > 1$. Moreover, it was shown in \cite{bbr} that
typically the resonances accumulate at the Landau levels. It is
conjectured that the singularities of the SSF $\xi(\cdot;H \pm
V,H)$ at the points $2bq + \lambda$, $q \in \Z_+$, are due to
accumulation of resonances to these points. One simple motivation
for this conjecture is the fact that if $V$ is axisymmetric, then
the eigenvalues of the operators $p_q U p_q$, $q \in \Z_+$,
appearing in \eqref{ssf31} are equal exactly to the quantities
$\langle V \Phi_{q,m}, \Phi_{q,m}\rangle_{L^2(\re_+ \times \re;
\varrho d\varrho dx_3)}$, $m \geq -q$, occurring in \eqref{vin4}
and \eqref{x10}. We leave for a future work the detailed analysis
of the relation between the singularities of the SSF at the points
$2bq + \lambda$ and the eventual accumulation of resonances at
these points. Hopefully, in this future work we will also extend
our results of Sections 3 -- 5 to the case of non-axisymmetric
perturbations $V$. \\
(iii) As mentioned above, if $\lambda \in \sigma_{\rm
disc}(\hpa)$, then $\lambda$ is an isolated eigenvalue of $H$ of
infinite multiplicity. Set
$$
\lambda_- : = \left\{
\begin{array} {l}
\sup \{\mu \in \sigma(H), \quad \mu < \lambda\} \quad {\rm if}
\quad \lambda > \inf \sigma(H), \\
- \infty \quad {\rm if} \quad \lambda = \inf \sigma(H),
\end{array}
\right.
$$
$$
\lambda_+ : = \inf \{\mu \in \sigma(H), \quad \mu > \lambda\}.
$$
By Pushnitski's representation of the SSF (see \cite{p1}), and the
Birman-Schwinger principle for discrete eigenvalues in gaps of the
essential spectrum, we have
$$
\xi(\lambda - \eta; H - V; H) = - n_+(1; V^{1/2}
(H-\lambda+\eta)^{-1} V^{1/2}) =
$$
$$
- {\rm rank}\,{\mathbb P}_{(\lambda_-, \lambda - \eta)}(H-V) + O
(1), \quad \eta \downarrow 0,
$$
$$
\xi(\lambda + \eta; H + V; H) = n_-(1; V^{1/2}
(H-\lambda-\eta)^{-1} V^{1/2}) =
$$
$$
{\rm rank}\,{\mathbb P}_{(\lambda + \eta, \lambda_+)}(H+V) + O(1),
\quad \eta \downarrow 0.
$$
Then Theorem \ref{t31} and Corollary \ref{f41} imply that the
perturbed operator $H - V$ (resp., $H+V$) has an infinite sequence
of discrete eigenvalues accumulating to $\lambda$ from
the left (resp., from the right). \\

 {\bf 6.2.} This subsection contains some
preliminary results
needed for the proof of Theorem \ref{t31}.  \\
In what follows we denote by $S_1$ the trace class, and by $S_2$
the Hilbert-Schmidt class of compact operators, and by
$\|\cdot\|_j$ the norm in $S_j$, $j=1,2$.\\
Suppose that $\eta \in \re$ satisfies
    \bel{ssf20}
    0 < |\eta| < \min{\left\{2b+\lambda, \frac{1}{2} {\rm dist}\,\left(\lambda,
    \sigma(\hpa)\setminus\{\lambda\}\right)\right\}}.
    \ee
    Note that inequalities \eqref{ssf20} combined with \eqref{ser7}, imply
    $$
    \lambda + \eta \in (-2b,0), \quad \lambda + \eta \not \in \sigma(\hpa), \quad {\rm dist}\,(\lambda + \eta,
    \sigma(\hpa)) = |\eta|.
    $$
Set $P_j = p_j \otimes I_{\parallel}$, $j \in {\mathbb Z}_+$. For
$z \in {\mathbb C}_+ : = \{\zeta \in {\mathbb C} | {\rm Im}\,\zeta
> 0\}$, $j \in {\mathbb Z}_+$, and $W : = V^{1/2}$, put
$$
 T_j(z) : = W P_j (H - z)^{-1} W.
$$
\begin{pr} \label{p32}
Assume  the hypotheses of Theorem \ref{t31}. Suppose that
\eqref{ssf20} holds true.
    Fix $q \in {\mathbb Z}_+$. Let $j \in {\mathbb Z}_+$, $j \leq
    q$. Then the operator-norm limit
    \bel{ssf50}
    T_j(2bq + \lambda + \eta) =
    \lim_{\delta \downarrow 0} T_j(2bq + \lambda + \eta + i\delta)
    \ee
exists in ${\mathcal L}(L^2(\rt))$. Moreover, if $j<q$,
we have $T_j(2bq + \lambda + \eta) \in S_1$, and
    \bel{ssf8}
    \|T_j(2bq + \lambda + \eta)\|_1 = O(1), \quad \eta \to 0.
    \ee
\end{pr}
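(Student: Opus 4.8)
The plan is to push everything down to the one–dimensional operator $\hpa$ via the Landau fibre decomposition, and then to feed in the limiting absorption principle \eqref{35}--\eqref{36}. First I would observe that $P_j$ commutes with $H$ and that $H$ restricted to $\Ran P_j$ equals $2bj+I_\perp\otimes\hpa$, so that $P_j(H-z)^{-1}=p_j\otimes R_\parallel(z-2bj)$ with $R_\parallel(\zeta):=(\hpa-\zeta)^{-1}$, whence $T_j(z)=W\bigl(p_j\otimes R_\parallel(z-2bj)\bigr)W$, $W=V^{1/2}$. With $z=2bq+\lambda+\eta+i\delta$ the fibre parameter is $\zeta_j=2b(q-j)+\lambda+\eta+i\delta$. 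For $j=q$ one has $\zeta_q\to\lambda+\eta$, which by \eqref{ssf20} and \eqref{ser7} lies in the resolvent set of $\hpa$; hence $R_\parallel(\zeta_q)$ converges in operator norm by ordinary resolvent continuity, which proves \eqref{ssf50} in this case (no trace–class assertion is made for $j=q$). For $j<q$ one has $\zeta_j\to E_j:=2b(q-j)+\lambda+\eta$, and since $2b(q-j)+\lambda\ge 2b+\lambda>0$ by \eqref{ser7}, for $|\eta|$ small $E_j$ stays in a fixed compact $J\subset(0,\infty)$ — precisely the range covered by \eqref{35}--\eqref{36}. Fixing $s$ with $1/2<s<m_3/2$ (possible since $m_3>1$) and using $|W(\bx)|=O(\langle\xp\rangle^{-\mpe/2}\langle x_3\rangle^{-m_3/2})$ from \eqref{33}, so that $W\langle x_3\rangle^{s}$ is a bounded multiplier, I would write
\[
T_j(z)=(W\langle x_3\rangle^{s})\,\bigl(p_j\otimes[\langle x_3\rangle^{-s}R_\parallel(\zeta_j)\langle x_3\rangle^{-s}]\bigr)\,(\langle x_3\rangle^{s}W);
\]
by \eqref{35} the middle factor converges in $\mathcal L(L^2(\rt))$ as $\delta\downarrow0$, which yields \eqref{ssf50} for $j<q$ as well.

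For the trace–class estimate \eqref{ssf8} ($j<q$) I would iterate the resolvent identity to write, for a fixed integer $N$ and a fixed $c$ with $\hpa+c\ge1$,
\[
R_\parallel(\zeta_j)=\sum_{k=1}^{2N-1}(\zeta_j+c)^{k-1}R_\parallel(-c)^{k}+(\zeta_j+c)^{2N}R_\parallel(-c)^{N}R_\parallel(\zeta_j)R_\parallel(-c)^{N}.
\]
The terms $W\bigl(p_j\otimes R_\parallel(-c)^{k}\bigr)W$ are $\eta$–independent up to the bounded factors $(\zeta_j+c)^{k-1}$ and lie in $S_1$ with norm $O(\|V\|_{L^1(\rt)})$: factor $R_\parallel(-c)^{k}=(\hpa+c)^{-k/2}(\hpa+c)^{-k/2}$ and combine (i) the Landau identity $\|(\text{mult }f)p_j\|_2^2=\tfrac{b}{2\pi}\|f\|_{L^2(\rd)}^2$, (ii) $V=W^2\in L^1(\rt)$ (a consequence of $\mpe>2$, $m_3>1$), and (iii) the fact that $(\hpa+c)^{-k/2}$ has an integral kernel with exponential off–diagonal decay and at worst logarithmic on–diagonal singularity, so that $\int|(\hpa+c)^{-k/2}(x_3,y_3)|^2\,dx_3$ is finite uniformly in $y_3$; this makes $W(p_j\otimes(\hpa+c)^{-k/2})$ and $(p_j\otimes(\hpa+c)^{-k/2})W$ Hilbert–Schmidt. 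For the remainder $(\zeta_j+c)^{2N}W\bigl(p_j\otimes[R_\parallel(-c)^{N}R_\parallel(\zeta_j)R_\parallel(-c)^{N}]\bigr)W$ the inputs are that the kernel $R_\parallel(E_j+i0)(x_3,y_3)$ is uniformly bounded for $E_j\in J$ — from the Jost–solution representation recalled in Subsection 2.2 together with $\mathcal T(\sqrt E)\ne0$ — and that $R_\parallel(-c)^{N}$ has an exponentially decaying kernel. Inserting $\langle x_3\rangle^{-s}\langle x_3\rangle^{s}$ around the central $R_\parallel(\zeta_j)$, distributing the $R_\parallel(-c)^{N}$ factors and the weights, and using \eqref{36} together with the $x_\perp$–decay of $W$, I expect the remainder to split as a product of two Hilbert–Schmidt operators with Hilbert–Schmidt norms controlled, uniformly in $\eta$, by $\|V\|_{L^1(\rt)}$ and the $J$–uniform constant of \eqref{36}.

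The main obstacle is exactly this factorization of the remainder. Because $R_\parallel(E_j+i0)(x_3,y_3)$ is bounded but carries no spatial decay, all the decay needed to make the two Hilbert–Schmidt factors finite has to be supplied by the outer multipliers $W$ and by the longitudinal regularization $R_\parallel(-c)^{N}$, while the limiting–absorption weights $\langle x_3\rangle^{\pm s}$ (with $s>1/2$ forced) must be absorbed without demanding more longitudinal decay than $V$ possesses. Getting this bookkeeping right — in particular checking that both factors are genuinely Hilbert–Schmidt, which is where $\mpe>2$ (so that $\langle\xp\rangle^{-\mpe/2}p_j\langle\xp\rangle^{-\mpe/2}\in S_1$, with trace $\tfrac{b}{2\pi}\int_{\rd}\langle\xp\rangle^{-\mpe}\,d\xp$) and $m_3>1$ both enter — is the technical core; the uniformity of the bound in $\eta$ is then automatic, since $E_j$ remains in the fixed compact $J$ and the constant in \eqref{36} is uniform on $J$.
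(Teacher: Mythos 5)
Your treatment of the limit \eqref{ssf50} is correct and is essentially the paper's argument: the fibre decomposition $T_j(z)=W\bigl(p_j\otimes(\hpa-(z-2bj))^{-1}\bigr)W$, ordinary resolvent continuity for $j=q$ (since $\lambda+\eta\notin\sigma(\hpa)$ by \eqref{ssf20}), and the limiting absorption principle \eqref{35} with weights $\langle x_3\rangle^{\pm s}$, $1/2<s\leq m_3/2$, absorbed into $W$ for $j<q$.

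The trace-class estimate \eqref{ssf8}, however, is not established. The step you yourself flag as the ``technical core'' --- factoring the remainder $(\zeta_j+c)^{2N}W\bigl(p_j\otimes R_\parallel(-c)^{N}R_\parallel(\zeta_j)R_\parallel(-c)^{N}\bigr)W$ into two Hilbert--Schmidt pieces --- genuinely fails under the stated hypotheses. Since the boundary-value kernel of $R_\parallel(E_j+i0)$ is bounded but has no spatial decay, any such factorization must route the central factor through the LAP weights, i.e.\ through $\langle x_3\rangle^{-s}R_\parallel(E_j+i0)\langle x_3\rangle^{-s}$ with $s>1/2$; the compensating weights $\langle x_3\rangle^{s}$ then land on the outer factors, and the Hilbert--Schmidt norm of $W\bigl(p_j\otimes R_\parallel(-c)^{N}\bigr)\langle x_3\rangle^{s}$ is controlled by $\int_{\rt}V(\bx)\langle x_3\rangle^{2s}d\bx$ (the exponential off-diagonal decay of $R_\parallel(-c)^N$ merely transfers the weight back onto the variable where $W$ sits). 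This is finite only if $m_3>1+2s>2$, whereas the hypothesis is $m_3=m_0>1$. The missing ingredient, which the paper takes from \cite[Section 4.1]{BPR}, is that the weighted boundary value of the \emph{free} one-dimensional resolvent at positive energy, $t_{\parallel,0}(E)=\langle x_3\rangle^{-m_3/2}(\hopa-E-i0)^{-1}\langle x_3\rangle^{-m_3/2}$, is itself trace class for $m_3>1$, with $\|t_{\parallel,0}(E)\|_1\leq c|E|^{-1/2}(1+E_+^{1/4})$. Combining this with the single resolvent identity $t_\parallel(E)=t_{\parallel,0}(E)(I_\parallel-\tilde M t_\parallel(E))$, $\tilde M=v_0\langle x_3\rangle^{m_3}$ bounded, and the uniform bound \eqref{36} for the second factor, one gets $\|t_\parallel(E_j)\|_1=O(1)$ directly; then $\|T_j\|_1\leq\|M\|^2\,\|t_{\perp,j}\|_1\,\|t_\parallel(E_j)\|_1$ with $\|t_{\perp,j}\|_1=\frac{b}{2\pi}\int_{\rd}\langle\xp\rangle^{-\mpe}d\xp$ finishes the proof. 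Without a trace-class statement of this kind for the boundary value of the resolvent itself, your expansion in powers of $R_\parallel(-c)$ cannot close.
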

\begin{proof}
We have
    \bel{ssf6}
    T_j(z) = M (t_{\perp, j} \otimes t_{\parallel}(z - 2bj)) M, \quad z
    \in {\mathbb C}_+,
    \ee
where
$$
M: = W(\xp, x_3) \langle \xp \rangle^{\mpe/2} \langle x_3
\rangle^{m_3/2},
$$
$$
t_{\perp, l} : = \langle \xp \rangle^{-\mpe/2} p_l \langle \xp
\rangle^{-\mpe/2}, \quad l \in {\mathbb Z}_+,
$$
$$
t_{\parallel}(\zeta) : = \langle x_3 \rangle^{-m_3/2} (\hpa  -
\zeta)^{-1} \langle x_3 \rangle^{-m_3/2}, \quad \zeta \in {\mathbb
C}_+.
$$
Since the operators $M$ and $t_{\perp}$ are bounded, in order to
prove that the limit \eqref{ssf50} exists in ${\mathcal
L}(L^2(\rt))$, it suffices to show that the operator-norm limit
    \bel{ssf1}
    \lim_{\delta \downarrow 0} t_{\parallel}(2b(q-j) + \lambda +
    \eta + i\delta)
    \ee
exists in ${\mathcal L}(L^2(\re))$. If $j<q$, the limit in
\eqref{ssf1} exists due to the existence of the limit in
\eqref{35}. If $j=q$, the limit in \eqref{ssf1} exists just
because $\lambda + \eta \not \in \sigma(\hpa)$.\\
Further, set
    $$
    t_{\parallel, 0}(\zeta) : = \langle x_3 \rangle^{-m_3/2} (\hopa
- \zeta)^{-1} \langle x_3 \rangle^{-m_3/2}, \quad \zeta \in
\overline{{\mathbb C}}_+\setminus\{0\}.
    $$
    For $E = 2b(q-j) + \lambda + \eta$, from the resolvent  equation we deduce
    \bel{ssf2}
t_{\parallel}(E) = t_{\parallel, 0}(E)(I_{\parallel} -\tilde{M}
t_{\parallel}(E))
    \ee
where $\tilde{M} : = v_0(x_3) \langle x_3 \rangle^{m_3}$ is a
bounded multiplier. By \cite[Section 4.1]{BPR}, the operator
$t_{\parallel, 0}(E)$ with $E \in \re\setminus\{0\}$ is
trace-class, and we have
    \bel{ssf3}
\|t_{\parallel, 0}(E)\|_1 \leq \frac{c}{\sqrt{|E|}} (1 +
E_+^{1/4})
    \ee
with $c$ independent of $E$. \\
Assume $j<q$. Then \eqref{ssf2}, \eqref{ssf3}, and \eqref{36}
imply $t_{\parallel}(2b(q-j) + \lambda + \eta) \in S_1$, and
    \bel{ssf4}
    \|t_{\parallel}(2b(q-j) + \lambda + \eta)\|_1 = O(1), \quad
    \eta \to 0.
    \ee
Finally, for any $l \in {\mathbb Z}_+$ we have $t_{\perp, l} \in
S_1$, and
    \bel{ssf7}
    \|t_{\perp, l}\|_1 = \frac{b}{2\pi} \int_{\re^2} \langle \xp
    \rangle^{-m_\perp} d \xp
    \ee
(see e.g. \cite[Subsection 4.1]{BPR}). Bearing in mind the
structure of the operator $T_j$ (see \eqref{ssf6}) and the
boundedness of the operator $M$, we find that $T_j(2b(q-j) +
\lambda + \eta) \in S_1$, and due to \eqref{ssf4} and
\eqref{ssf7}, estimate \eqref{ssf8} holds true.
\end{proof}
Now set $P_q^+ : = \sum_{j=q+1}^{\infty} p_j$, $q \in {\mathbb Z}_+$,
the convergence of the series being understood in the strong
sense. For $z \in {\mathbb C}_+$ set
$$
T_q^+(z) : = W (H-z)^{-1} P_q^+W.
$$
\begin{pr} \label{p33}
Assume that $v_0$, $V$, and $\lambda$, satisfy the hypotheses of
Theorem \ref{t31}, and $\eta \in \re$ satisfies \eqref{ssf20}. Fix
$q \in {\mathbb Z}_+$. Then the operator-norm limit
    \bel{ssf10}
T_q^+(2bq + \lambda + \eta) = \lim_{\delta \downarrow 0}T_q^+(2bq
+ \lambda + \eta + i\delta)
    \ee
    exists in ${\mathcal L}(L^2(\rt))$. Moreover,
    $T_q^+(2bq + \lambda + \eta) \in S_2$, and
    \bel{ssf9}
    \|T_q^+(2bq + \lambda + \eta)\|_2 = O(1), \quad \eta \to 0.
    \ee
\end{pr}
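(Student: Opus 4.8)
The plan is to exploit a feature that was not available in the proof of Proposition~\ref{p32}: for every $j\geq q+1$ the energy $2bq+\lambda+\eta$ lies strictly below the spectrum of the restriction of $H$ to $\Ran(p_j\otimes I_\parallel)$, so that no limiting absorption principle is needed and the only real work is a quantitative Hilbert--Schmidt estimate. Since the spectral projections $p_j$ of $\hope$ commute with the multiplier by $v_0$, the projection $P_q^+=\sum_{j\geq q+1}p_j\otimes I_\parallel$ commutes with $H$, and $(H-z)^{-1}P_q^+=\sum_{j\geq q+1}p_j\otimes(\hpa-(z-2bj))^{-1}$. Put $E:=2bq+\lambda+\eta$. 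From \eqref{ssf20} we get $|\eta|<-\lambda/2$ (since $\operatorname{dist}(\lambda,\sigma(\hpa)\setminus\{\lambda\})\leq-\lambda$), hence for $j\geq q+1$,
$$
E-2bj\;\leq\;\lambda+|\eta|-2b\;<\;\tfrac{\lambda}{2}-2b\;<\;-2b\;<\;\inf\sigma(\hpa)
$$
by \eqref{ser7}; thus $E-2bj$ lies in the resolvent set of $\hpa$, at distance $\geq|\lambda|/2$ from $\sigma(\hpa)$, this distance tending to $+\infty$ as $j\to\infty$. Writing, as in \eqref{ssf6} and the lines following it, $W=M\,(\langle\xp\rangle^{-\mpe/2}\otimes\langle x_3\rangle^{-m_3/2})$ with $M$ bounded by \eqref{33}, we obtain $T_q^+(z)=M\,R_q^+(z)\,M$ with $R_q^+(z):=\sum_{j\geq q+1}t_{\perp,j}\otimes t_\parallel(z-2bj)$, the operators $t_{\perp,l}$ and $t_\parallel$ being as defined after \eqref{ssf6}.

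For the existence of the limit \eqref{ssf10} I would use only operator-norm bounds. Each summand $t_{\perp,j}\otimes t_\parallel(z-2bj)$ depends norm-continuously on $z$ up to $z=E$ (for fixed $j$, because $E-2bj$ is in the resolvent set of $\hpa$), while the tails are uniformly small: with $P_N^+:=\sum_{j>N}p_j\otimes I_\parallel$ one has, for $z=E+i\delta$ with $\delta\geq0$,
$$
\sum_{j>N}t_{\perp,j}\otimes t_\parallel(z-2bj)=\big(\langle\xp\rangle^{-\mpe/2}\otimes\langle x_3\rangle^{-m_3/2}\big)(H-z)^{-1}P_N^+\big(\langle\xp\rangle^{-\mpe/2}\otimes\langle x_3\rangle^{-m_3/2}\big),
$$
whose norm is at most the inverse of the distance from $E$ to the spectrum of $H$ restricted to $\Ran P_N^+$; since that spectrum starts at $2b(N+1)+\inf\sigma(\hpa)$, this norm is $O(1/N)$ uniformly in $\delta$. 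Letting $N\to\infty$ and then $\delta\downarrow0$ gives $R_q^+(E+i\delta)\to R_q^+(E)$, hence $T_q^+(E+i\delta)=M\,R_q^+(E+i\delta)\,M\to T_q^+(E)$, in ${\mathcal L}(L^2(\rt))$.

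For $T_q^+(E)\in S_2$ with $\|T_q^+(E)\|_2=O(1)$ as $\eta\to0$, I would estimate the Hilbert--Schmidt norm of the truncations $R_N:=\sum_{q+1\leq j\leq N}t_{\perp,j}\otimes t_\parallel(E-2bj)$ and pass to the limit. Starting from
$$
\|R_N\|_2^2=\sum_{q+1\leq j,k\leq N}\Tr(t_{\perp,j}t_{\perp,k})\,\Tr\big(t_\parallel(E-2bj)^*\,t_\parallel(E-2bk)\big),
$$
the nonnegativity $\Tr(t_{\perp,j}t_{\perp,k})\geq0$ (product of nonnegative operators) together with $\sum_{k\geq0}\Tr(t_{\perp,j}t_{\perp,k})=\Tr(p_j\langle\xp\rangle^{-2\mpe})=\frac{b}{2\pi}\int_{\rd}\langle\xp\rangle^{-2\mpe}\,d\xp=:c_1$ (finite since $\mpe>2$, independent of $j$, by the computation behind \eqref{ssf7}), the inequality $|\Tr(A^*B)|\leq\tfrac12(\|A\|_2^2+\|B\|_2^2)$, and the symmetry of $\Tr(t_{\perp,j}t_{\perp,k})$ in $j,k$, yield $\|R_N\|_2^2\leq c_1\sum_{j\geq q+1}\|t_\parallel(E-2bj)\|_2^2$. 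Finally, the resolvent identity \eqref{ssf2} gives $\|t_\parallel(E-2bj)\|_2\leq\|t_{\parallel,0}(E-2bj)\|_2\big(1+\|\tilde{M}\|\operatorname{dist}(E-2bj,\sigma(\hpa))^{-1}\big)$, while the explicit kernel $(2\sqrt{|\zeta|})^{-1}e^{-\sqrt{|\zeta|}\,|x-y|}$ of $(\hopa-\zeta)^{-1}$, $\zeta<0$, furnishes the sharp decay $\|t_{\parallel,0}(\zeta)\|_2^2\leq\tfrac14\,\|\langle\cdot\rangle^{-m_3}\|_{L^1(\re)}\,|\zeta|^{-3/2}$, finite because $m_3>1$. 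Hence $\sum_{j\geq q+1}\|t_\parallel(E-2bj)\|_2^2\leq C\sum_{j\geq q+1}(2bj-E)^{-3/2}<\infty$, a bound uniform for $E$ near $2bq+\lambda$; combined with the operator-norm convergence $R_N\to R_q^+(E)$ this gives $R_q^+(E)\in S_2$ with $\|R_q^+(E)\|_2=O(1)$, whence $\|T_q^+(E)\|_2\leq\|M\|^2\|R_q^+(E)\|_2=O(1)$.

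The hard part is the last display together with the reorganisation preceding it: the naive triangle inequality $\|R_q^+(E)\|_2\leq\sum_j\|t_{\perp,j}\|_2\|t_\parallel(E-2bj)\|_2$ diverges, since $\|t_{\perp,j}\|_1=O(1)$ does not decay in $j$ and $\|t_\parallel(E-2bj)\|_2$ decays only like $(j-q)^{-3/4}$. One must first square and use the near-orthogonality $\sum_k\Tr(t_{\perp,j}t_{\perp,k})=\text{const}$ of the Landau projections to reduce matters to the summable series $\sum_j(j-q)^{-3/2}$, which in turn forces the use of the sharp $|\zeta|^{-3/2}$ bound on $\|t_{\parallel,0}(\zeta)\|_2^2$ rather than the cruder trace-norm bound \eqref{ssf3}; this is also why only $S_2$-membership (and not $S_1$, as for $T_j$ with $j<q$ in Proposition~\ref{p32}) can be claimed here.
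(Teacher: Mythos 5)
Your argument is correct, and it diverges from the paper's proof precisely at the point you identify as the hard part. For the existence of the limit \eqref{ssf10} both arguments rest on the same fact — by \eqref{ser7} the spectrum of $H$ restricted to $\Ran P_q^+$ lies in $(2bq,\infty)$, so the resolvent continues across $(-\infty,2bq)\ni E$ — though the paper states this once for $(H-z)^{-1}P_q^+$ as a whole, while you re-derive it through the Landau-level decomposition with a uniform tail bound; both are fine. For the Hilbert--Schmidt estimate the paper instead writes the resolvent identity $T_q^+(E)=W(H_0-E)^{-1}P_q^+\bigl(W-v_0(H-E)^{-1}P_q^+W\bigr)$, reduces matters to the free operator $H_0$, and then simply cites \cite[Proposition 4.2]{fr} for $W(H_0-E)^{-1}P_q^+\in S_2$ with a uniform bound. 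You keep the full operator $\hpa$ in each fiber, control it via \eqref{ssf2} together with the uniform lower bound $\operatorname{dist}(E-2bj,\sigma(\hpa))\geq|\lambda|/2$, and then carry out the summation over Landau levels from scratch: squaring the Hilbert--Schmidt norm, exploiting $\Tr(t_{\perp,j}t_{\perp,k})\geq0$ and $\sum_k\Tr(t_{\perp,j}t_{\perp,k})=\Tr(p_j\langle\xp\rangle^{-2\mpe})=\mathrm{const}$, and pairing this with the sharp decay $\|t_{\parallel,0}(\zeta)\|_2^2=O(|\zeta|^{-3/2})$ to land on the convergent series $\sum_n n^{-3/2}$. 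Your diagnosis that the naive triangle inequality fails (the $t_{\perp,j}$ do not decay in trace norm while $\|t_\parallel(E-2bj)\|_2$ decays only like $(j-q)^{-3/4}$) is exactly the obstruction that the cited external result also has to overcome; your version has the merit of making the paper's proof self-contained, at the price of a longer computation. All the individual steps check out: the commutation of $P_q^+$ with $H$, the bound $|\eta|<-\lambda/2$ deduced from \eqref{ssf20} via $0\in\sigma(\hpa)\setminus\{\lambda\}$, the kernel bound for $(\hopa-\zeta)^{-1}$ with $\zeta<0$, and the passage from the uniformly $S_2$-bounded truncations to $R_q^+(E)\in S_2$ (the truncations are in fact Cauchy in $S_2$ by the same estimate applied to the tails).
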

\begin{proof}
Due to \eqref{ser7}, the operator-valued function $$\C_+ \ni z
\mapsto (H-z)^{-1}P_q^+ \mapsto {\mathcal L}(L^2(\re^3))$$ admits
an analytic continuation in $\{\zeta \in {\mathbb C} \, | \, {\rm
Re}\,\zeta < 2bq\}$. Since $\lambda + \eta < 0$, and $W$ is
bounded, we immediately find that the limit in \eqref{ssf10}
exists.  Evidently, the operator-valued function $\C_+ \ni z
\mapsto (H_0-z)^{-1}P_q^+ \mapsto {\mathcal L}(L^2(\re^3))$ also
admits an analytic continuation in $\{\zeta \in {\mathbb C} \, |
\, {\rm Re}\,\zeta < 2bq\}$, and for $E = 2bq + \lambda + \eta$ we
have
    \bel{ssf13}
T_{q}^+(E) = W(H_0-E)^{-1}P_q^+ (W - v_0 (H-E)^{-1}P_q^+ W).
    \ee
Arguing as in the proof of \cite[Proposition 4.2]{fr}, we obtain
$$
W(H_0-2bq - \lambda - \eta)^{-1}P_q^+  \in S_2,
$$
and
    \bel{ssf11}
    \|W(H_0-2bq - \lambda - \eta)^{-1}P_q^+\|_2 = O(1), \quad \eta \to
    0.
    \ee
    Since $\lambda < 0$, we have
    \bel{ssf12}
\|W - v_0 (H-2bq - \lambda - \eta)^{-1}P_q^+ W\| = O(1), \quad
\eta \to
    0.
    \ee
    Putting together \eqref{ssf13} and
    \eqref{ssf11}-\eqref{ssf12}, we obtain \eqref{ssf9}.
\end{proof}

{\bf 6.3.} In this subsection we prove Theorem \ref{t31}.  \\
Suppose that $\eta \in \re$ satisfies \eqref{ssf20}. Fix $q \in
{\mathbb Z}_+$. Set
$$
T(2bq + \lambda + \eta) : = T_q^-(2bq + \lambda + \eta) + T_q(2bq
+ \lambda + \eta) + T_q^+(2bq + \lambda + \eta),
$$
where
$$
T_q^-(2bq + \lambda + \eta) = \sum_{j<q} T_j(2bq + \lambda +
\eta).
$$
Note that the operators $T_q(2bq + \lambda + \eta)$ and
$T_q^+(2bq + \lambda + \eta)$ are self-adjoint. \\
 By
Pushnitski's representation of the SSF for sign-definite
perturbations (see \cite{p1}), we have
    \bel{ssf27a}
    \xi(2bq + \lambda + \eta; H \pm V, H) =
    \pm \frac{1}{\pi} \int_{\re} n_{\mp}(1, {\rm Re}\,T(2bq + \lambda +
    \eta) + s\,{\rm Im}\,T(2bq + \lambda +
    \eta)) \frac{ds}{1+s^2}.
    \ee
By \eqref{ssf27a} and the well-known Weyl inequalities, for each
$\varepsilon \in (0,1)$ we have
    $$
n_{\mp}(1 + \varepsilon ; T_q(2bq + \lambda + \eta)) -
R_{\varepsilon}(\eta) \leq \pm \xi(2bq + \lambda + \eta; H \pm V,
H) \leq
$$
    \bel{ssf27}
    n_{\mp}(1 - \varepsilon ; T_q(2bq + \lambda + \eta)) +
R_{\varepsilon}(\eta)
 \ee
 where
 $$
R_{\varepsilon}(\eta) : =
$$
$$
n_*(\varepsilon/3; {\rm Re}\, T_q^-(2bq + \lambda + \eta)) +
n_*(\varepsilon/3; T_q^+(2bq + \lambda + \eta)) +
\frac{3}{\varepsilon} \|T_q^-(2bq + \lambda + \eta)\|_1 \leq
$$
    \bel{ssf28}
\frac{6}{\varepsilon} \sum_{j<q} \|T_j(2bq + \lambda + \eta)\|_1 +
\frac{9}{\varepsilon^2} \|T^+_q(2bq + \lambda + \eta)\|_2^2 = O(1),
\quad \eta \to 0,
    \ee
due to Propositions \ref{p31} -- \ref{p32}.\\
Next, set
$$
\tau_q = W (p_q \otimes \ppar) W, \quad \tilde{T}_q(\lambda +
\eta) : = W (p_q \otimes (\hpa  - \lambda - \eta)^{-1}
(I_{\parallel} - \ppar)) W,
$$
provided that $\eta \in \re$ satisfies \eqref{ssf20}. Evidently,
$$
T_q(2bq + \lambda + \eta) = -\eta^{-1} \tau_q +
\tilde{T}_q(\lambda + \eta).
$$
Applying again the Weyl inequalities, we get
$$
n_{\pm}(s(1+\varepsilon)|\eta|; ({\rm sign}\,\eta)\tau_q) -
n_*(s\varepsilon; \tilde{T}_q(\lambda + \eta)) \leq n_{\mp}(s;
T_q(2bq + \lambda + \eta)) \leq
$$
    \bel{ssf29}
n_{\pm}(s(1-\varepsilon)|\eta|; ({\rm sign}\,\eta) \tau_q) +
n_*(s\varepsilon; \tilde{T}_q(\lambda + \eta))
    \ee
for each $s>0$, $\varepsilon \in (0,1)$, and $\eta$ satisfying
\eqref{ssf20}. Note that since $p_q U p_q \geq 0$ we have
    \bel{ssf26}
n_{\pm}(s |\eta|; ({\rm sign}\,\eta)\,\tau_q) = \left\{
\begin{array} {l}
n_+(s |\eta|; p_q U p_q) \quad {\rm if} \quad \pm
\eta > 0, \\
0 \quad {\rm if} \quad \pm \eta < 0,
    \end{array}
    \right.
    \ee
for every $s>0$. Further,
    \bel{ssf25}
    \tilde{T}_q(\lambda + \eta) = M (t_{\perp, q} \otimes \tilde{t}_{\parallel}(\lambda + \eta))
    M
    \ee
where
$$
\tilde{t}_{\parallel}(\lambda + \eta) : = \langle x_3
\rangle^{-m_3/2} (\hpa  - \lambda - \eta)^{-1}
(I_{\parallel}-\ppar) \langle x_3 \rangle^{-m_3/2}.
$$
Obviously,
    \bel{ssf22}
 \|\tilde{t}_{\parallel}(\lambda + \eta)\| \leq
   \|(\hpa  - \lambda - \eta)^{-1} (I_{\parallel} - \ppar)\| = O(1), \quad \eta \to 0.
    \ee
On the other hand, similarly to \eqref{ssf2} we have
    \bel{ssf21}
\tilde{t}_{\parallel}(\lambda + \eta) = t_{\parallel, 0}(\lambda +
\eta)(I_{\parallel} -\tilde{M} \tilde{t}_{\parallel}(\lambda +
\eta))    - \langle x_3 \rangle^{-m_3/2} (\hopa  - \lambda -
\eta)^{-1} \ppar \langle x_3 \rangle^{-m_3/2}.
    \ee
Since $\ppar \langle x_3 \rangle^{-m_3/2}$ is a rank-one operator,
we have
$$
\|\langle x_3 \rangle^{-m_3/2} (\hopa  - \lambda - \eta)^{-1} \ppar
\langle x_3 \rangle^{-m_3/2}\|_1 \leq \|\langle x_3
\rangle^{-m_3/2} (\hopa  - \lambda - \eta)^{-1}\| \|\ppar \langle
x_3 \rangle^{-m_3/2}\|_2 \leq
$$
    \bel{ssf23}
    \int_{\re}\langle x \rangle^{-m_3} \psi(x)^2 dx \, |\lambda +
    \eta|^{-1} = O(1), \quad \eta \to 0.
    \ee
Putting together \eqref{ssf21}, \eqref{ssf3}, \eqref{ssf22}, and
\eqref{ssf23}, we get
    \bel{ssf24}
    \|\tilde{t}_{\parallel}(\lambda + \eta)\|_1 = O(1), \quad \eta \to
    0,
    \ee
which combined with \eqref{ssf25} and \eqref{ssf7} yields
    \bel{ssf30}
    n_{\pm}(s; \tilde{T}_q(\lambda + \eta)) \leq s^{-1} \|\tilde{T}_q(\lambda +
    \eta)\| = O(1), \quad \eta \to 0.
    \ee
Now \eqref{ssf31} -- \eqref{ssf32} follow from estimates
\eqref{ssf27} -- \eqref{ssf26}, and
\eqref{ssf30}.\\

{\bf Acknowledgements.} M. A. Astaburuaga and C. Fern\'andez were
partially supported by the Chilean {\em Laboratorio de An\'alisis
Estoc\'astico PBCT - ACT 13}.   Philippe Briet and Georgi Raikov
were partially supported by the CNRS-Conicyt Grant ``{\em
Resonances and embedded eigenvalues for quantum and classical
systems in exterior magnetic fields".} V. Bruneau was partially
supported by the French {\em ANR} Grant no. JC0546063. V. Bruneau
and G. Raikov were partially supported by the Chilean Science
Foundation {\em Fondecyt} under Grants 1050716 and 7060245.

\end{document}